\documentclass[11pt,a4paper]{amsart}
\usepackage{amsmath,amsfonts,amsthm,amsopn,color,amssymb,enumitem}
\usepackage[a4paper]{geometry}
\usepackage{palatino}
\usepackage{graphicx}
\usepackage[colorlinks=true]{hyperref}
\hypersetup{urlcolor=blue, citecolor=red, linkcolor=blue}

\usepackage{cite}
\usepackage{relsize}
\usepackage{esint}
\usepackage{verbatim}
\usepackage{mathrsfs}
\usepackage{xcolor}
\usepackage{tikz}

\newcommand{\e}{\varepsilon}

\newcommand{\ff}{{\mathtt f}}
\renewcommand{\leq}{\leqslant}
\renewcommand{\geq}{\geqslant}

\renewcommand{\d }{\delta }

\newcommand{\bd}{\boldsymbol}

\newcommand{\Ua}{{\mathcal{U}}}

\newcommand{\Ui}{{\mathcal{U}_{\delta_i, \xi_i}}}

\newcommand{\U}{{\mathcal{U}_{\delta, \xi}}}

\newcommand{\beq}{\begin{equation}}
\newcommand{\eeq}{\end{equation}}

\newtheorem{theorem}{Theorem}[section]
\newtheorem*{theorem*}{Theorem}
\newtheorem{lemma}[theorem]{Lemma}

\newtheorem{definition}[theorem]{Definition}
\newtheorem{proposition}[theorem]{Proposition}
\newtheorem{corollary}[theorem]{Corollary}
\theoremstyle{definition}
\newtheorem{remark}[theorem]{Remark}

\renewcommand{\(}{\left(}
\renewcommand{\)}{\right)}

\begin{document}

\title[Sign-changing multi-bubble solutions for the Brezis-Nirenberg problem in  four dimensions]{Sign-changing multi-bubble solutions for the Brezis-Nirenberg problem in  four dimensions}

\author[A. Pistoia]{Angela Pistoia}
\address{\noindent  Dipartimento di Scienze di Base e Applicate per l'Ingegneria, Università degli Studi di Roma Sapienza}
\email{angela.pistoia@uniroma1.it}

\author[G. M. Rago]{Giuseppe Mario Rago}
\address{\noindent  Dipartimento di Matematica, Universit\'a degli Studi di Bari Aldo Moro,Italy }
\email{g.rago6@phd.uniba.it}

\author[G. Vaira]{Giusi Vaira}
\address{\noindent  Dipartimento di Matematica, Universit\'a degli Studi di Bari Aldo Moro,Italy }
\email{giusi.vaira@uniba.it}

\subjclass{35B44, 35B33, 35J25}
\keywords{Brezis-Nirenberg problem; blowing-up solutions; Lyapunov-Schmidt reduction.}

\begin{abstract}
We construct families of sign-changing solutions for the four-dimensional
Brezis--Nirenberg problem
\[
-\Delta u=u^3+\varepsilon u\quad\text{in }\Omega,\qquad
u=0\quad\text{on }\partial\Omega,
\]
as $\varepsilon\to0^+$.  A Lyapunov--Schmidt reduction shows that the
location and relative scales of the bubbles are governed by a signed
Green--Robin interaction matrix.  We formulate an abstract existence
criterion in terms of a simple positive eigenvalue admitting a positive
eigenvector and a stable critical set.  We then apply it to a
positive--negative pair in a general domain and to several symmetric
multi-peak configurations, including alternating regular polygons,
orthogonal polygons, one central peak surrounded by peaks of the opposite
sign, and aligned three-, four-, and five-peak patterns.  For the
two-peak solution we also prove that it has exactly two nodal domains and,
under a natural balance condition and connectedness of the boundary, that
the closure of its nodal set meets the boundary.
\end{abstract}

\maketitle

\section{Introduction and main results}

In this paper we study the four-dimensional Brezis--Nirenberg problem
\begin{equation}\label{pb}
  \begin{cases}
    -\Delta u=u^3+\varepsilon u & \text{in }\Omega,\\
    u=0 & \text{on }\partial\Omega,
  \end{cases}
\end{equation}
where $\Omega\subset\mathbb R^4$ is a smooth bounded domain and
$\varepsilon>0$ is a small parameter. Our aim is to construct
sign-changing solutions which develop several concentration points as
$\varepsilon\to0^+$, and to understand how their locations, relative
scales and signs are encoded by the geometry of the domain.

The positive Brezis--Nirenberg problem is classical. Brezis and Nirenberg
proved in \cite{BN} that, if $N\ge4$, the problem
\[
-\Delta u=u^{2^*-1}+\mu u,\qquad
u>0\ \hbox{in }\Omega,\qquad
u=0\ \hbox{on }\partial\Omega,
\]
admits a solution for every $\mu\in(0,\lambda_1)$, where
$2^*=2N/(N-2)$ and $\lambda_1$ is the first Dirichlet eigenvalue of
$-\Delta$. As the perturbation tends to zero, positive solutions may
concentrate and, after rescaling, their profile is described by the
standard bubble
$$
  \mathcal U_{\delta,\xi}(x)
  =
  \delta^{-\frac{N-2}{2}}
  U\left(\frac{x-\xi}{\delta}\right),
  \qquad
  U(x)=\alpha_N(1+|x|^2)^{-\frac{N-2}{2}},
$$
where $U$ solves
$$
-\Delta U=U^{2^*-1}\qquad\text{in }\mathbb R^N.
$$
The location of the concentration points is governed by the Green and
Robin functions; see, among others, \cite{H,FW,R}.

Since the present paper is concerned with $N=4$, throughout the paper we
write
\[
\omega:=|\mathbb S^3|=2\pi^2
\]
and use the normalization
\[
G(x,y)=\frac{1}{2\omega|x-y|^2}-H(x,y),
\]
where, for fixed $y\in\Omega$, $H(\cdot,y)$ is harmonic in $\Omega$ and
\[
H(x,y)=\frac{1}{2\omega|x-y|^2}
\qquad\text{on }\partial\Omega.
\]
The Robin function is
\[
\tau_\Omega(x):=H(x,x).
\]

For a multi-bubble configuration, the Lyapunov--Schmidt reduction leads
naturally to a Green--Robin interaction matrix. In the positive case this
matrix is
$$
M(\boldsymbol\xi)=
\begin{pmatrix}
\tau_\Omega(\xi_1)&-G(\xi_1,\xi_2)&\cdots&-G(\xi_1,\xi_k)\\
-G(\xi_1,\xi_2)&\tau_\Omega(\xi_2)&\cdots&-G(\xi_2,\xi_k)\\
\vdots&\vdots&\ddots&\vdots\\
-G(\xi_1,\xi_k)&-G(\xi_2,\xi_k)&\cdots&\tau_\Omega(\xi_k)
\end{pmatrix}.
$$
The diagonal entries represent the interaction of each bubble with the
boundary, while the off-diagonal terms describe the interaction between
distinct bubbles.

For sign-changing solutions the signs of the bubbles modify the
interaction matrix. If
\[
\boldsymbol\sigma=(\sigma_1,\ldots,\sigma_k)\in\{-1,1\}^k,
\]
we introduce the signed interaction matrix
$$
m_{ii}^{\boldsymbol\sigma}=\tau_\Omega(\xi_i),
\qquad
m_{ij}^{\boldsymbol\sigma}
=-\sigma_i\sigma_jG(\xi_i,\xi_j),
\quad i\ne j.
$$
Thus the geometry of the domain and the nodal pattern enter the reduced
problem simultaneously.\\
We now introduce the notation needed to state our results. For $k\ge1$,
let
\[
\Omega_k^*
=
\left\{
\boldsymbol\xi=(\xi_1,\ldots,\xi_k)\in\Omega^k:
\xi_i\ne\xi_j\ \text{for }i\ne j
\right\}.
\]
We shall use the notion of stable critical set introduced by Y.Y. Li in
\cite{yyl}.

\begin{definition}\label{yy1}
Let $f:D\subset\mathbb R^n\to\mathbb R$ be a smooth function. A compact
set $\mathscr K$ of critical points of $f$ is called \emph{stable} if
there exists an open neighbourhood $\Theta\Subset D$ of $\mathscr K$ such
that
\[
\deg(\nabla f,\Theta,0)\ne0.
\]
\end{definition}

Typical examples are strict local minimum or maximum sets. In particular,
if $\mathscr K$ is compact, $f$ is constant on $\mathscr K$, and
\[
f(x)<f(y)
\qquad
\text{for }x\in\mathscr K,\quad y\in\Theta\setminus\mathscr K,
\]
then $\mathscr K$ is stable. An isolated nondegenerate critical point is
another standard example.

The following abstract theorem is the basic tool used throughout the
paper.

\begin{theorem}\label{main}
Let $\boldsymbol\sigma=(\sigma_1,\ldots,\sigma_k)\in\{-1,1\}^k$ and let
$\bar\Lambda$ be a simple eigenvalue branch of
$M^{\boldsymbol\sigma}(\boldsymbol\xi)$ in a neighbourhood of a stable
critical set $\mathscr K\Subset\Omega_k^*$. Assume that
$\bar\Lambda>0$ on $\mathscr K$ and that the associated eigenvector can be
normalized as
\[
\bar{\boldsymbol\ff}(\boldsymbol\xi)
=
(1,\bar f_2(\boldsymbol\xi),\ldots,\bar f_k(\boldsymbol\xi))^T,
\qquad
\bar f_i(\boldsymbol\xi)>0.
\]
Equivalently, throughout the proof we use the normalization
\begin{equation}\label{impMbar}
M^{\boldsymbol\sigma}(\boldsymbol\xi)
\bar{\boldsymbol\ff}(\boldsymbol\xi)
=
\bar\Lambda(\boldsymbol\xi)
\bar{\boldsymbol\ff}(\boldsymbol\xi),
\qquad
\bar f_1(\boldsymbol\xi)=1.
\end{equation}
Then, for $\varepsilon>0$ sufficiently small, problem~\eqref{pb} has a
solution
\[
u_\varepsilon
=
\sum_{i=1}^k
\sigma_iP\mathcal U_{\delta_{i,\varepsilon},\xi_{i,\varepsilon}}
+\phi_\varepsilon,
\]
with
\[
\boldsymbol\xi_\varepsilon\to\boldsymbol\xi_0\in\mathscr K,
\qquad
\|\phi_\varepsilon\|_{H_0^1(\Omega)}\to0,
\]
and
\[
\frac{\delta_{i,\varepsilon}}{\delta_{1,\varepsilon}}
\longrightarrow
\bar f_i(\boldsymbol\xi_0)
\quad (i=2,\ldots,k),
\]
while
\[
\varepsilon\log\delta_{i,\varepsilon}^{-1}
\longrightarrow
8\pi^2\bar\Lambda(\boldsymbol\xi_0)
\quad (i=1,\ldots,k).
\]
If $\boldsymbol\sigma$ contains both signs, the resulting solution is
sign-changing.
\end{theorem}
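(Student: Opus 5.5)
We follow the standard Lyapunov--Schmidt scheme and take the ansatz $u=\sum_i\sigma_iP\mathcal U_{\delta_i,\xi_i}+\phi$. The concentration parameters are written as
\[
\delta_i=e^{-\frac{\Lambda}{\varepsilon}}\,d_i\quad\text{(up to the normalizing constant } 8\pi^2\text{)},
\qquad d_1=1,\ d_i\in[\eta,\eta^{-1}],
\]
with $\Lambda$ in a compact interval around $\bar\Lambda(\mathscr K)$ and $\boldsymbol\xi$ in the neighbourhood $\Theta$ of $\mathscr K$. In four dimensions the natural choice is $\delta_i^2\sim e^{-c\Lambda/\varepsilon}$, and the constant $c=1/(8\pi^2)$ is fixed by the expansion below so that $\varepsilon\log\delta_i^{-1}\to 8\pi^2\bar\Lambda$.

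\textbf{Step 1: reduction.} We solve the projected equation for $\phi$ in the orthogonal complement of $\mathrm{span}\{P\partial_{\delta_i}\mathcal U_i,P\partial_{\xi_{i}^j}\mathcal U_i\}$. This uses the invertibility of the linearized operator $-\Delta-3|\sum\sigma_iP\mathcal U_i|^2-\varepsilon$ on that complement, which follows from the nondegeneracy of $U$ and a standard contradiction argument using the separation of the $\xi_i$. A contraction argument then gives $\|\phi\|\lesssim \varepsilon\,\delta|\log\delta|^{1/2}+\delta^2$, i.e.\ smaller than the leading energy terms.

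\textbf{Step 2: energy expansion.} We compute $J(\boldsymbol\delta,\boldsymbol\xi):=I_\varepsilon(\sum\sigma_iP\mathcal U_i+\phi)$. The projection error $\mathcal U_i-P\mathcal U_i=c\,\delta_iH(\cdot,\xi_i)+O(\delta_i^3)$, the interaction terms $\int\mathcal U_i^3P\mathcal U_j\sim c\,\delta_i\delta_jG(\xi_i,\xi_j)$, and the linear term $\varepsilon\int\mathcal U_i^2\sim c'\varepsilon\,\delta_i^2|\log\delta_i|$ yield, uniformly in $C^1$,
\[
J=k\,\tfrac{S}{4}+c_1\Big[\sum_i\tau_\Omega(\xi_i)\delta_i^2-\sum_{i\ne j}\sigma_i\sigma_jG(\xi_i,\xi_j)\delta_i\delta_j-c_2\,\varepsilon\sum_i\delta_i^2\log\delta_i^{-1}\Big]+o(\delta^2).
\]
In the variables $\delta_i=e^{-\Lambda/\varepsilon}d_i$ this becomes
\[
e^{-2\Lambda/\varepsilon}\Big(\langle M^{\boldsymbol\sigma}(\boldsymbol\xi)\mathbf d,\mathbf d\rangle-c\Lambda|\mathbf d|^2+\varepsilon\,\text{(lower order)}\Big).
\]
Critical points of $J$ correspond to genuine solutions by the usual argument that the Lagrange multipliers vanish.

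\textbf{Step 3: finding critical points of the reduced function.} The leading reduced function is
\[
F(\Lambda,\mathbf d,\boldsymbol\xi)=\langle M^{\boldsymbol\sigma}\mathbf d,\mathbf d\rangle-c\Lambda|\mathbf d|^2 .
\]
Its critical points in $\mathbf d$ satisfy $M^{\boldsymbol\sigma}\mathbf d=c\Lambda\mathbf d$, while criticality in the scale variable is encoded jointly with the $\log$ term. Since $\bar\Lambda$ is simple, $\mathbf d=\bar\ff(\boldsymbol\xi)$ has positive entries, and $\Lambda=\bar\Lambda/c$ is positive, this critical point in $(\Lambda,\mathbf d)$ is nondegenerate transversally: the Hessian in $\mathbf d$ restricted to the complement of $\bar\ff$ is $M-\bar\Lambda$, which is invertible by simplicity, and the $\Lambda$-direction has a nonzero derivative because $\bar\Lambda>0$. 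At such points $F$ reduces, up to positive factors, to a function whose $\boldsymbol\xi$-gradient is $\nabla_{\boldsymbol\xi}\bar\Lambda$ (or $\nabla\langle M\bar\ff,\bar\ff\rangle/|\bar\ff|^2$), so $\mathscr K$ is read as a stable critical set of $\bar\Lambda$.

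The degree of $\nabla F$ on $\{\Lambda,\mathbf d\text{ near the branch}\}\times\Theta$ is then $\pm\deg(\nabla_{\boldsymbol\xi}\bar\Lambda,\Theta,0)\neq0$, by the product formula for the degree. Because the $C^0$ perturbation $\varepsilon^{-1}e^{2\Lambda/\varepsilon}(J-kS/4)-F\to0$ uniformly, degree invariance yields a critical point for small $\varepsilon$. This gives the stated limits: $d_i\to\bar f_i(\boldsymbol\xi_0)$, $\boldsymbol\xi_\varepsilon\to\boldsymbol\xi_0$, and $\varepsilon\log\delta_i^{-1}\to 8\pi^2\bar\Lambda$. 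The solution is sign-changing whenever both signs occur, since after rescaling it converges to $\pm U$ near each $\xi_i$.

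\textbf{Main obstacle.} The delicate point is Step 2/3: obtaining the expansion in $C^1$ with remainders that are genuinely $o(\delta^2)$ after division by $e^{-2\Lambda/\varepsilon}$, including the $\phi$-contributions, so that the degree argument applies. We would handle it by estimating $\partial_{\delta_i}J$ and $\partial_{\xi_i}J$ directly through the reduced equation tested against the kernel elements, rather than by differentiating the energy expansion.
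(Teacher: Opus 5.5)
Your Step~3 has a genuine gap: the vector field whose degree you compute never vanishes. With your $F(\Lambda,\mathbf d,\boldsymbol\xi)=\langle M^{\boldsymbol\sigma}\widehat{\mathbf d},\widehat{\mathbf d}\rangle-c\Lambda|\widehat{\mathbf d}|^2$, where $\widehat{\mathbf d}=(1,d_2,\dots,d_k)$, one has $\partial_\Lambda F=-c|\widehat{\mathbf d}|^2<0$ everywhere. So $\nabla_{(\Lambda,\mathbf d,\boldsymbol\xi)}F$ has no zeros, and its degree is $0$, not $\pm\deg(\nabla\bar\Lambda,\Theta,0)$.

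What you allude to (``criticality in the scale variable is encoded jointly with the log term'') is that $\Lambda$ sits in the prefactor:
\[
\partial_\Lambda J=-\tfrac{2c_1}{\varepsilon}e^{-2\Lambda/\varepsilon}\bigl(F+O(\varepsilon)+o(1)\bigr).
\]
Hence the limiting scale equation is $F=0$, not $\partial_\Lambda F=0$. Equivalently, since $\partial_\Lambda=-\varepsilon^{-1}\sum_i\delta_i\partial_{\delta_i}$, the natural scale equations are the $k$ equations $e^{\Lambda/\varepsilon}\partial_{\delta_i}J=0$. These read $\bigl((M^{\boldsymbol\sigma}-c\Lambda)\widehat{\mathbf d}\bigr)_i+o(1)=0$ for $i=1,\dots,k$.

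The perturbation step fails for the same reason. The quantity $\varepsilon^{-1}e^{2\Lambda/\varepsilon}(J-kS/4)-F$ is not $o(1)$, because of the factor $\varepsilon^{-1}$. More to the point, the normalizing factor depends on $\Lambda$, so critical points of the rescaled function are not critical points of $J$. In any case, $C^0$-closeness of functions gives no control on the degree of their gradients. Also, $\bar\Lambda>0$ plays no role in nondegeneracy; it only guarantees $\delta_i\to0$.

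What is needed is uniform convergence of the normalized field $\bigl(e^{\Lambda/\varepsilon}\partial_{\delta_i}J,\;e^{2\Lambda/\varepsilon}\partial_{\boldsymbol\xi}J\bigr)$, up to positive constants, to the non-gradient map
\[
\Phi(\Lambda,\mathbf d,\boldsymbol\xi)=\Bigl((M^{\boldsymbol\sigma}(\boldsymbol\xi)-c\Lambda)\widehat{\mathbf d},\ \bigl\langle\partial_{\boldsymbol\xi}M^{\boldsymbol\sigma}(\boldsymbol\xi)\widehat{\mathbf d},\widehat{\mathbf d}\bigr\rangle\Bigr),
\]
and then a degree computation for $\Phi$ itself.

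For fixed $\boldsymbol\xi$, the first block has the zero $c\Lambda=\bar\Lambda(\boldsymbol\xi)$, $\widehat{\mathbf d}=\bar{\boldsymbol\ff}(\boldsymbol\xi)$. Its $(\Lambda,\mathbf d)$-Jacobian there is $\bigl[-c\bar{\boldsymbol\ff}\,\big|\,(M^{\boldsymbol\sigma}-\bar\Lambda)e_2\,\big|\cdots\big|\,(M^{\boldsymbol\sigma}-\bar\Lambda)e_k\bigr]$, and it is invertible. Indeed, if $-ac\bar{\boldsymbol\ff}+(M^{\boldsymbol\sigma}-\bar\Lambda)(0,\mathbf b)=0$, pairing with $\bar{\boldsymbol\ff}$ gives $a=0$. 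Then simplicity together with $\bar f_1=1\neq0$ gives $\mathbf b=0$.

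Along this branch the second block equals $|\bar{\boldsymbol\ff}|^2\nabla_{\boldsymbol\xi}\bar\Lambda$ by Hellmann--Feynman. Lemma~\ref{isolated} then yields $\deg(\Phi,\Xi,0)\neq0$ near $\{c\Lambda=\bar\Lambda(\boldsymbol\xi),\ \widehat{\mathbf d}=\bar{\boldsymbol\ff}(\boldsymbol\xi),\ \boldsymbol\xi\in\mathscr K\}$. Homotopy invariance under the uniform $o(1)$ perturbation of the field gives the zero.

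This is exactly the paper's argument with $(F_1,F_2,F_3)$. The paper obtains the field directly from the multipliers $c_h^j$ of Proposition~\ref{propro}, not from an energy expansion. Your closing remark about testing against kernel elements is the right way to obtain that field convergence, but Step~3 as written does not use it.
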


The first concrete consequence holds in an arbitrary smooth bounded
domain.

\begin{theorem}[A positive--negative pair]\label{main-two-peak}
Let $\Omega\subset\mathbb R^4$ be a smooth bounded domain. For
$\varepsilon>0$ sufficiently small, problem~\eqref{pb} admits a
sign-changing solution with one positive and one negative concentration
point. More precisely, the limiting pair
$(\xi_1^*,\xi_2^*)$ belongs to the set of global minimizers in
$\Omega_2^*$ of
\[
\Lambda_2(\xi_1,\xi_2)
=
\frac{
\tau_\Omega(\xi_1)+\tau_\Omega(\xi_2)
+
\sqrt{
(\tau_\Omega(\xi_1)-\tau_\Omega(\xi_2))^2
+4G(\xi_1,\xi_2)^2
}
}{2}.
\]
The solution has exactly two nodal domains. If, in addition,
$\partial\Omega$ is connected and
\[
\tau_\Omega(\xi_1^*)=\tau_\Omega(\xi_2^*),
\]
then
\[
\overline{\{x\in\Omega:u_\varepsilon(x)=0\}}
\cap\partial\Omega
\ne\emptyset.
\]
\end{theorem}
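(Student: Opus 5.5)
Apply Theorem~\ref{main} with $k=2$ and $\boldsymbol\sigma=(1,-1)$. The signed matrix is then
\[
M^{\boldsymbol\sigma}(\xi_1,\xi_2)=\begin{pmatrix}\tau_\Omega(\xi_1)&G(\xi_1,\xi_2)\\ G(\xi_1,\xi_2)&\tau_\Omega(\xi_2)\end{pmatrix},
\]
and its largest eigenvalue is exactly $\Lambda_2$. Since $G>0$ in $\Omega_2^*$, the gap $\Lambda_2-\lambda_{\min}=\sqrt{(\tau_1-\tau_2)^2+4G^2}$ is positive. Hence $\Lambda_2$ is a simple, smooth eigenvalue branch on $\Omega_2^*$, and it is positive because $\tau_\Omega>0$. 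By Perron--Frobenius, or directly from the formula $\bar f_2=(\Lambda_2-\tau_\Omega(\xi_1))/G(\xi_1,\xi_2)$, the eigenvector $(1,\bar f_2)$ has $\bar f_2>0$.

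It remains to produce a stable critical set, for which we take the set of global minimizers. We need $\Lambda_2\to+\infty$ at the boundary of $\Omega_2^*$, that is, when some $\xi_i\to\partial\Omega$ or when $|\xi_1-\xi_2|\to0$.
\begin{itemize}
\item If $\xi_i\to\partial\Omega$, then $\tau_\Omega(\xi_i)\to\infty$, and since $\Lambda_2\ge\max\tau_\Omega(\xi_i)$, we get $\Lambda_2\to\infty$.
\item If $|\xi_1-\xi_2|\to0$ with the points staying in a compact subset, then $G\to\infty$ and $\Lambda_2\ge G$, so again $\Lambda_2\to\infty$.
\end{itemize}
Here we use $\Lambda_2\ge\frac{\tau_1+\tau_2}{2}+G$ together with $\tau_\Omega\ge0$. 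If both effects occur together, the first bound still applies. Hence $\Lambda_2$ is coercive, the set of global minimizers $\mathscr K$ is compact in $\Omega_2^*$, and a sublevel neighbourhood shows that $\mathscr K$ is a strict minimum set, hence stable in the sense of Definition~\ref{yy1}. Theorem~\ref{main} then yields the solution $u_\varepsilon=P\mathcal U_{\delta_1,\xi_1}-P\mathcal U_{\delta_2,\xi_2}+\phi_\varepsilon$, with $\boldsymbol\xi_\varepsilon\to\boldsymbol\xi^*\in\mathscr K$.

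\textbf{Two nodal domains.} There are at least two, since $u_\varepsilon$ is positive near $\xi_1$ and negative near $\xi_2$. For the upper bound I would use the standard energy argument. Each nodal domain $D$ gives $u_\varepsilon\chi_D\in H_0^1$ with $\int|\nabla u\chi_D|^2=\int_D u^4+\varepsilon\int_D u^2$, hence $\int_D u^4\ge S^2(1+o(1))$, where $S$ is the Sobolev constant. The total energy of $u_\varepsilon$ is $2\cdot\frac{S^2}{4}+o(1)$, because the bubbles decouple and $\|\phi_\varepsilon\|\to0$. Each nodal domain carries energy at least $\frac{S^2}{4}-o(1)$, so there are at most two. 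This is routine, though some care is needed with the $\varepsilon\int u^2$ term, which is small.

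\textbf{Nodal set meeting the boundary.} This is the step I expect to be the main obstacle. Away from the concentration points, $u_\varepsilon$ behaves like a multiple of
\[
\delta_1\,G(x,\xi_1)-\delta_2\,G(x,\xi_2).
\]
When $\tau_\Omega(\xi_1^*)=\tau_\Omega(\xi_2^*)$, the eigenvector is $(1,1)$, so $\delta_2/\delta_1\to1$ and the limit profile is $G(\cdot,\xi_1^*)-G(\cdot,\xi_2^*)$, after normalization by $\delta_1$ in $C^1_{loc}(\overline\Omega\setminus\{\xi_i^*\})$ (using elliptic estimates). Its normal derivative on $\partial\Omega$ is $\partial_\nu G(\cdot,\xi_1^*)-\partial_\nu G(\cdot,\xi_2^*)$, the difference of two Poisson kernels. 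Both kernels have total mass $1$, so this difference integrates to zero over $\partial\Omega$ and, being nontrivial, changes sign on $\partial\Omega$.

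Since $\partial\Omega$ is connected, the difference vanishes at some point where one-sided regions of both signs meet. By the $C^1$ convergence up to the boundary, $u_\varepsilon$ is positive near some boundary points and negative near others. Now argue by contradiction: if the closure of the nodal set stayed a positive distance from $\partial\Omega$, then a collar neighbourhood of the connected boundary would be connected and free of zeros. That would force $u_\varepsilon$ to have a single sign there, contradicting the sign change.

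The delicate points are the uniform $C^1$ convergence near $\partial\Omega$, which requires controlling $\phi_\varepsilon$ in $C^1$ away from the peaks via the equation and the smallness of $\varepsilon\log\delta^{-1}$ corrections, and ensuring that the normalized $\delta_2/\delta_1$ error does not destroy the sign change. The latter is handled by the strict sign change of the limit normal derivative.
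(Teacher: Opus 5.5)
Your proof is correct, and part (i) is the paper's argument. The other two parts differ from the paper as follows.

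\textbf{Nodal count.} For the upper bound on the number of nodal domains you take a genuinely different route. You test the equation with $u_\varepsilon\chi_D$ to get $\int_D u_\varepsilon^4\ge S^2(1-\varepsilon/\lambda_1(\Omega))^2$ for every nodal domain $D$, and compare this with $\int_\Omega u_\varepsilon^4\to 2S^2$. The paper instead localizes. It uses $C^1_{\mathrm{loc}}$ convergence of the rescaled solutions to put the cores $B(\xi_{i,\varepsilon},r\delta_{i,\varepsilon})$ inside the positive and negative nodal domains. It then rules out a third nodal domain, which must lie off the cores, because there $a_\varepsilon=|u_\varepsilon|^2+\varepsilon$ has $L^2$ norm below the Sobolev threshold. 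Your energy quantization needs only $\|\phi_\varepsilon\|\to0$ and bubble decoupling, and it gives at most $k$ nodal domains for any $k$-bubble solution at once. The paper's version gives the extra information that each nodal domain contains a bubble core.

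\textbf{Boundary statement.} Here you run the paper's argument in direct rather than contrapositive form. The paper assumes a zero-free collar and uses Hopf to make $\partial_\nu u_\varepsilon$ one-signed. It then passes to the limit and gets $\partial_\nu g\equiv0$ from the zero mean. You instead deduce from the zero mean that $\partial_\nu g$ takes both strict signs, and transfer this to $u_\varepsilon$ through $C^1$ convergence up to $\partial\Omega$. Hopf is actually dispensable in both versions, since a non-strict sign suffices.

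\textbf{Points to fix.} You assert $\partial_\nu G(\cdot,\xi_1^*)\not\equiv\partial_\nu G(\cdot,\xi_2^*)$ without proof. The paper proves it via $h(\xi)=-\int_{\partial\Omega}h\,\partial_\nu G(\cdot,\xi)\,d\sigma$ with $h=x_j$. Unique continuation also works: zero Cauchy data for $G(\cdot,\xi_1^*)-G(\cdot,\xi_2^*)$ would contradict its pole at $\xi_1^*$. There is also a minor slip in your closing remarks: $\varepsilon\log\delta^{-1}$ is not small, since it tends to $8\pi^2\Lambda_2(\boldsymbol\xi^*)$. What makes $\phi_\varepsilon/\delta_1\to0$ is $\|\phi_\varepsilon\|/\delta_1=O(\delta_1+\varepsilon)$.
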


Our second concrete result collects the symmetric multi-peak
configurations.

\begin{theorem}[Symmetric multi-peak configurations]
\label{main-configurations}
For $\varepsilon>0$ sufficiently small, problem~\eqref{pb} admits the
following sign-changing solutions.

\begin{enumerate}
\item[(a)] \textbf{Alternating regular polygons.}
In the unit ball, and in every annulus
\[
\{a<|x|<1\},\qquad 0<a<1,
\]
for every even $k\ge2$, there is a solution with $k$ alternating
concentration points at the vertices of a regular $k$-gon contained in
the $(x_1,x_2)$-plane. The solution can be chosen even in $x_3$ and
$x_4$ and anti-invariant under the rotation $R_{2\pi/k}$ in the
$(x_1,x_2)$-plane:
\[
u(R_{2\pi/k}x)=-u(x).
\]

\item[(b)] \textbf{Two orthogonal regular polygons.}
In the unit ball, for every $1\le k\le6$, there is a solution with $k$
positive concentration points at the vertices of a regular $k$-gon in
the $(x_1,x_2)$-plane and $k$ negative concentration points at the
vertices of a regular $k$-gon in the orthogonal $(x_3,x_4)$-plane.
The solution can be chosen invariant under rotations of angle $2\pi/k$
in each coordinate plane and antisymmetric under exchange of the two
planes:
\[
u(x',x'')=-u(x'',x'),
\qquad x',x''\in\mathbb R^2.
\]

\item[(c)] \textbf{One central peak and a negative polygon.}
In the unit ball, for every $k\ge2$, there is a solution with one
positive concentration point at the origin and $k$ negative
concentration points at the vertices of a regular $k$-gon in the
$(x_1,x_2)$-plane. The solution can be chosen invariant under rotation
of angle $2\pi/k$ in the $(x_1,x_2)$-plane and under the full orthogonal
group acting on the $(x_3,x_4)$-plane.

\item[(d)] \textbf{Three aligned peaks in a symmetric domain.}
Assume that $0\in\Omega$, that $\Omega$ is invariant under
\[
(x_1,x')\longmapsto(-x_1,x'),
\qquad x'\in\mathbb R^3,
\]
and under a subgroup $\mathcal G\subset O(3)$ acting on the
$x'$-variables whose common fixed-point set in $\mathbb R^4$ is the
$x_1$-axis. Assume moreover that the connected component of
$\Omega\cap(\mathbb Re_1)$ containing the origin is $(-R,R)e_1$ for
some $R>0$. Then there is a solution with one positive concentration
point at the origin and two negative concentration points at a symmetric
pair $\pm r_\varepsilon e_1$.

\item[(e)] \textbf{Four aligned peaks in a convex symmetric domain.}
Assume the symmetries in \emph{(d)} and, in addition, that $\Omega$ is
convex. Then there is a four-peak aligned solution with concentration
points
\[
-r_{2,\varepsilon}e_1,\quad
-r_{1,\varepsilon}e_1,\quad
r_{1,\varepsilon}e_1,\quad
r_{2,\varepsilon}e_1,
\qquad
0<r_{1,\varepsilon}<r_{2,\varepsilon}<R,
\]
and alternating sign pattern
\[
-,+,-,+.
\]
Equivalently, the solution can be chosen odd with respect to the
reflection $x_1\mapsto-x_1$ and invariant under the transverse
$\mathcal G$-action.

\item[(f)] \textbf{Five aligned peaks in the ball.}
In the unit ball there is a five-peak aligned solution with concentration
points
\[
0,\qquad
\pm s_\varepsilon e_1,\qquad
\pm t_\varepsilon e_1,
\qquad
0<s_\varepsilon<t_\varepsilon<1,
\]
and sign pattern
\[
+,-,+,-,+.
\]
The solution can be chosen even under $x_1\mapsto-x_1$ and invariant
under the full orthogonal group acting on the transverse variables
$(x_2,x_3,x_4)$.
\end{enumerate}

In each case the limiting geometric parameters belong to a compact stable
set of global minimizers of the corresponding symmetry-reduced positive
eigenvalue, and the relative concentration scales are determined by its
positive eigenvector.
\end{theorem}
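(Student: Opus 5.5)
The plan is to deduce each case (a)--(f) from Theorem~\ref{main}, applied not on the full configuration space $\Omega_k^*$ but on the subspace of configurations compatible with the prescribed symmetry group $\Gamma$. First I will check that the Lyapunov--Schmidt reduction behind Theorem~\ref{main} can be carried out in the closed subspace of $H_0^1(\Omega)$ made of functions with the stated symmetry, e.g.\ $u(R_{2\pi/k}x)=-u(x)$ in (a) or $u(x',x'')=-u(x'',x')$ in (b). Since $\Omega$ is $\Gamma$-invariant and the nonlinearity $u^3+\varepsilon u$ is odd, the energy is $\Gamma$-invariant, so by the principle of symmetric criticality a critical point of the restricted functional solves \eqref{pb}.

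Within this class, the ansatz $\sum\sigma_i P\mathcal U_{\delta_i,\xi_i}$ is determined by a few free parameters. In (a) these are one radius $r$ and one scale $\delta$. In (b) they are $r_1$, $r_2$ and one scale, because the exchange of planes forces equal radii and scales. In (c) and (d) they are a radius $r$ and two scales. In (e) and (f) they are two radii and two or three scales. The reduced energy keeps the expansion used in Theorem~\ref{main}, restricted to these parameters. The condition to verify is that, for the restricted matrix $M^{\boldsymbol\sigma}$ (or its quotient by the symmetry), there is a stable critical set of the principal positive eigenvalue with a positive eigenvector.

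The second step computes the reduced eigenvalue in each case. In (a), symmetry makes every row sum of $M^{\boldsymbol\sigma}$ equal. The vector $(1,\dots,1)$ is therefore an eigenvector with eigenvalue
\[
\Lambda(r)=\tau(r)+\sum_{j\ne1}(-1)^{j}G(\xi_1,\xi_j),
\]
where the alternating signs make the effective interaction repulsive. Since $\Lambda(r)\to+\infty$ as $r\to1$ (and as $r\to a$ in the annulus), a global minimizer of $\Lambda$ on the admissible interval is a stable critical set. I still need to check that this eigenvalue is positive and is the relevant simple branch among the symmetric eigenvectors. In (b), the eigenvalue is
\[
\Lambda=\tau(r)-\sum_{j\ne1}G(\xi_1,\xi_j)+\sum_{j}G(\xi_1,\eta_j),
\]
and the restriction $k\le 6$ should be exactly what keeps $\Lambda$ positive. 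The reason is that the attraction among the $k$ like-signed points in a plane must not overwhelm the Robin term, which I will check using the explicit Green function of the ball. In (c)--(f), the reduced matrix is $2\times2$ or $3\times3$ after quotienting by symmetry. Its entries are, up to sign, positive off the diagonal, so Perron--Frobenius applied to a suitable conjugate matrix gives a positive eigenvector for the top eigenvalue.

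The third step shows that the top eigenvalue attains an interior global minimum on the reduced space. The boundary behaviour has three parts.
\begin{itemize}
\item At $\partial\Omega$ the Robin function blows up, so $\Lambda\to\infty$.
\item When two opposite-sign points collide, the interaction term $+G$ enters the top eigenvalue and blows up, so again $\Lambda\to\infty$.
\item When like-signed points collide, the top eigenvalue would decrease. The ansatz avoids this: in (d)--(f) adjacent aligned points have opposite signs, and in (b) the in-plane collision happens only at $r\to0$, where the points also approach the other polygon.
\end{itemize}
Coercivity then makes the set of global minimizers compact and stable by Definition~\ref{yy1}. In (e) convexity enters to guarantee that the segment stays inside $\Omega$ and to control the monotonicity of $\tau$ along the axis. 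In (d) the hypothesis that the axis component is $(-R,R)e_1$ plays the same role.

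I expect the main obstacle to be positivity of $\bar\Lambda$ on the minimizing set, together with simplicity of the eigenvalue and positivity of the eigenvector, especially in (b) and (f). These need explicit estimates with the ball's Green function
\[
G(x,y)=\frac{1}{2\omega}\left(|x-y|^{-2}-\big(|x|^2|y|^2-2x\cdot y+1\big)^{-1}\right).
\]
For these I would first test $\Lambda$ at a candidate configuration and show it is positive there. Then I would use that at the minimizer $\Lambda$ is at most this value, while $\Lambda$ is bounded below by $\min\tau$ minus the attractive interactions, and check the resulting inequality numerically for each $k\le6$.
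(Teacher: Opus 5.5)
Your framework is the paper's: reduction in the symmetric subspace, Theorem~\ref{main} applied to the orbit-reduced matrix, and the full set of global minimizers of a coercive reduced eigenvalue. However, the spectral verifications, which are where the work lies, are either missing or rest on a false claim.

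\textbf{Case (f).} This is the decisive failure. The matrix $S(s,t)$ has off-diagonal entries $\sqrt2\,g(s)>0$, $-\sqrt2\,g(t)<0$ and $F(s,t)>0$. Conjugating by $D=\mathrm{diag}(\pm1,\pm1,\pm1)$ multiplies $S_{ij}$ by $d_id_j$, so the product $S_{12}S_{13}S_{23}<0$ is invariant, and no ``suitable conjugate'' is Perron--Frobenius. Hence neither simplicity of $\Lambda=\lambda_{\max}(S)$ nor positivity of its eigenvector is automatic. You need that positivity for $S$ itself, not a conjugate, to realize the pattern $+,-,+,-,+$. Numerical checks of $\Lambda>0$ address neither point. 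The paper supplies two ideas you are missing:
\begin{itemize}
\item Simplicity comes from strict interlacing, $\Lambda(s,t)>\mu_s$, where $\mu_s$ is the top eigenvalue of the leading $2\times2$ block. This is shown by testing with $(\beta_1,\beta_2,\eta)$ and using $\gamma_s>1/\sqrt2$ together with $F(s,t)>2g(t)$.
\item Positivity is proved \emph{only on the minimizing set}. There the Hellmann--Feynman identities $x^TS_sx=x^TS_tx=0$, after dividing by $x_2x_3\neq0$, become two linear equations. These fix $x_1,x_2$ as positive multiples of $x_3$, because $\Delta>0$. Positivity then extends to a neighbourhood of $K$ by continuity.
\end{itemize}

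\textbf{Coercivity.} Your coercivity step is wrong as stated: like-signed points \emph{do} collide at the degenerate end in (a) (ball, $k\ge4$), (b), (c), (d), and (f) as $s\to0$.
\begin{itemize}
\item In (c), (d) and (f) this is harmless. The opposite-sign interaction sits off the diagonal of a $2\times2$ block, whose top eigenvalue is then of order $\frac{-A+\sqrt{A^2+4B^2}}{2\rho^2}>0$.
\item In the single-orbit cases (a) and (b) it is a competition of sums that must be computed. In (a), the identity $\sum_{j=1}^{k-1}(-1)^j\sin^{-2}\frac{\pi j}{k}=-\frac{k^2+2}{6}$ gives $\Lambda_k\sim\frac{k^2+2}{24\rho^2}$.
\item In (b), Lemma~\ref{sommaab} together with $\sum_{j=1}^{k-1}\sin^{-2}\frac{\pi j}{k}=\frac{k^2-1}{3}$ gives $\Lambda_k^\perp\sim\frac{-k^2+6k+1}{12\rho^2}$. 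This is in-plane attraction against cross-plane repulsion; the Robin term is bounded there. So $k\le6$ is precisely coercivity at $\rho=0$, not a Robin-versus-attraction condition. For $k\ge7$, $\Lambda_k^\perp$ is increasing with limit $-\infty$, and there is no minimizer at all.
\end{itemize}
Your positivity plan for (b) is also invalid. An upper bound on $\min\Lambda$ from a test configuration says nothing about its sign. And $\min\tau$ minus the attractive terms is unbounded below near $\rho=0$.

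\textbf{Case (e).} Convexity is not used to keep the segment in $\Omega$ or to control monotonicity of $\tau$. Via $(x-y)\cdot\nabla_xG(x,y)<0$, it gives $G(r_1e_1,r_2e_1)>G(-r_1e_1,r_2e_1)$, that is, a strictly positive off-diagonal entry of $M_4$. Without that sign, ``positive up to sign'' may yield an eigenvector with opposite-signed components, which is a different nodal pattern. It may also lose simplicity where the entry vanishes.

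\textbf{Annulus in (a).} Finally, $\Lambda_k>0$ on the annulus is not automatic. The paper proves it through the Gegenbauer expansion and the inequality $\alpha_{k,m}\ge0$.
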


\subsection{A comparison with higher dimensions}

A useful comparison with higher dimensions helps to explain the special
role played here by $N=4$. Sign-changing multi-bubble solutions for
critical and supercritical problems in dimensions $N\ge4$ were studied by
Micheletti and Pistoia in \cite{MiPi}. In particular, for the
Brezis--Nirenberg perturbation in dimensions $N\ge5$, their reduction leads
to a finite-dimensional functional depending simultaneously on the
concentration points and on the individual scales. In the notation used
here, its leading part has the form
\begin{equation}\label{signed-reduced-functional}
F^{\boldsymbol\sigma}(\boldsymbol d,\boldsymbol\xi)
=
a_N\left\langle
M^{\boldsymbol\sigma}(\boldsymbol\xi)\boldsymbol d,
\boldsymbol d
\right\rangle
-
b_N\sum_{j=1}^kd_j^{\frac{4}{N-2}},
\end{equation}
up to the corresponding normalization of the scale variables. The
general reduction allows one to prescribe arbitrary signs, but the
critical point analysis of \eqref{signed-reduced-functional} is rather
delicate because the location variables and all the relative scales
remain coupled. This is one of the reasons why explicit constructions in
natural geometries were obtained only for particular configurations. For
example, in the ball, \cite{MiPi} constructs a family with an even number
of bubbles arranged on a regular polygon with alternating signs; the
symmetry reduces all the scales to one parameter and all the locations to
one radial variable. The same paper also produces many sign-changing
patterns in specially constructed dumbbell-type domains. Thus the
difficulty is not the general reduction itself, but the analysis of its
finite-dimensional critical points.

The four-dimensional case has a particularly favorable structure. The
concentration scales are exponentially small in $1/\varepsilon$, and the
leading equations for the relative scales become spectral. More
precisely, once the concentration points are fixed, the scale equations
are encoded by the eigenvalue problem for
$M^{\boldsymbol\sigma}(\boldsymbol\xi)$. A simple positive eigenvalue with
an eigenvector having strictly positive components determines the
relative scales, and the remaining location problem reduces to finding a
stable critical set of that eigenvalue. In this sense the scale variables
can be eliminated through the spectral structure of the interaction
matrix. This is the main feature that allows us to analyze in dimension
four configurations which are considerably harder to detect through the
higher-dimensional reduced functional.

The first application concerns one positive and one negative bubble in an
arbitrary smooth bounded domain. In that case the favorable sign of the
interaction yields a positive simple eigenvalue with a positive
eigenvector, and its global minimum gives the desired concentration
configuration. We also study the nodal geometry of the resulting
solution: it has exactly two nodal domains and, under a natural balance
condition, the closure of the nodal set reaches the boundary.

For three or more concentration points the situation is substantially
more delicate. Nevertheless, by exploiting symmetry we construct several
families of solutions. They include alternating regular polygons in balls
and annuli, two regular polygons lying in orthogonal coordinate planes,
one positive central peak surrounded by an arbitrary number of negative
peaks, and aligned configurations with three, four and five concentration
points. Some of these patterns, and in particular the asymmetric
distribution of positive and negative peaks and the five-point aligned
configuration, appear difficult to obtain directly from the
higher-dimensional functional \eqref{signed-reduced-functional}.

This comparison strongly suggests that the configurations found here
should not be regarded as genuinely four-dimensional phenomena. We
strongly conjecture that all the sign-changing configurations constructed
in this paper persist in dimensions $N\ge5$. What seems to be special to
dimension four is not their existence, but rather the spectral mechanism
which makes their construction accessible: in higher dimensions one must
analyze the full reduced functional, with the location and scale variables
still coupled.

\subsection{Open problems and further directions}

The constructions above suggest several natural questions. We formulate
them here as problems arising directly from the limitations of the
present method.

\begin{enumerate}

\item   {\it Higher dimensions.}
We strongly conjecture that all the multi-peak configurations constructed
in Theorem~\ref{main-configurations} persist in dimensions $N\ge5$. The
main obstacle is that the scale variables no longer reduce to a spectral
problem. Instead one has to analyze the full finite-dimensional
functional \eqref{signed-reduced-functional}, where the locations and the
relative scales remain coupled. Proving this conjecture, even for the
five-point aligned configuration or for the asymmetric $1+k$ pattern,
would already be of interest.\\

\item   {\it Multi-peak nodal solutions in general domains.}
The positive--negative pair can be constructed in an arbitrary smooth
bounded domain, whereas all configurations with three or more
concentration points obtained here require substantial symmetry. Can one
find general geometric or topological conditions on $\Omega$ ensuring a
stable critical set of a positive eigenvalue of
$M^{\boldsymbol\sigma}$ for $k\ge3$?\\

\item   {\it  More aligned peaks.}
The ball admits the five-peak aligned pattern constructed in this paper.
Can one construct aligned alternating configurations with seven or more
peaks? More generally, is there a systematic spectral criterion for the
symmetry-reduced interaction matrices associated with an arbitrary odd
number of aligned peaks?\\

\item   {\it Asymmetric numbers of positive and negative
peaks outside the ball.}
The unit ball admits one positive central peak surrounded by an arbitrary
number $k$ of negative peaks. Which geometric assumptions on a general
domain allow an analogous $1+k$ construction? Can one construct families
for which the difference between the numbers of positive and negative
concentration points is larger than one without imposing radial
symmetry?\\

\item   {\it The nodal set of the two-peak solution.}
Theorem~\ref{main-two-peak} proves that the closure of the nodal set meets
the boundary under the balance condition
$
\tau_\Omega(\xi_1^*)=\tau_\Omega(\xi_2^*).
$
Is this condition necessary? Can one characterize, in terms of the Green
and Robin functions, when the nodal set meets $\partial\Omega$, and where
the contact occurs?\\

\item   {\it Selection and uniqueness of the limiting
configuration.}
Several results above use the complete compact set of global minimizers of
a reduced eigenvalue. Under what assumptions is this minimizer unique and
nondegenerate? In the annulus, if $\rho_k$ denotes a minimizing radius for
the alternating $k$-gon, we only prove
$
\operatorname{dist}(\rho_k,\{a,1\})\to0$ as $k\to\infty.
$
Which boundary component is selected, and what is the precise asymptotic
law for $\rho_k$?\\

\item   {\it A converse blow-up theory for nodal solutions.}
For positive solutions, reverse blow-up analysis leads naturally to the
Green--Robin interaction matrix. To what extent can one classify isolated
sign-changing blow-up configurations in dimension four and prove that
their limiting parameters must satisfy the spectral criticality
conditions appearing in Theorem~\ref{main}?\\

\end{enumerate}

\subsection{Organization of the paper}
The paper is organized as follows. Section~2 carries out the
Lyapunov--Schmidt reduction and proves Theorem~\ref{main}. Section~3
treats the positive--negative two-peak configuration and proves
Theorem~\ref{main-two-peak}, including the assertions on its nodal set.
Section~4 is devoted to the symmetric configurations collected in
Theorem~\ref{main-configurations}. The auxiliary matrix and Green-function
facts used in the proofs are collected in the appendices.
\\

\noindent{\bf Acknowledgments.}
The authors are deeply grateful to Professor Monica Musso for many
stimulating discussions, valuable insights, and helpful suggestions,
which greatly contributed to the development of this work.

G.~M.~Rago and G.~Vaira acknowledge support from the INdAM-GNAMPA
project ``Singolarità, nonlinearità e interazioni non locali: nuovi
approcci analitici per equazioni differenziali e sistemi''
(CUP E53C25002010001).
A.~Pistoia acknowledges support from the INdAM-GNAMPA project
``Punti critici e concentrazione di soluzioni di PDEs''
(CUP E53C25002010001).

\section{The reduction procedure}\label{sec1}

\subsection{Preliminaries}
Let $H^{1}_{0}(\Omega)$ be the Hilbert space equipped with the usual inner product and the usual norm 
\begin{equation*}
\langle u,v \rangle := \int_{\Omega} \nabla u \cdot \nabla v ,\quad \| u \|  := \left( \int_{\Omega} | \nabla u |^{2} \right)^{\frac{1}{2}}.
\end{equation*}
For $r \in [1, +\infty)$ the space $L^{r}(\Omega)$ is  equipped with the standard norm 
\begin{equation*}
\| u \|_{r} = \left( \int_{\Omega} |u|^{r} \right)^{\frac{1}{r}}.
\end{equation*}

As usual, let $i^{*}_\Omega: L^{\frac{4}{3}}(\Omega) \to H^{1}_{0}(\Omega)$ be the adjoint operator of the embedding $i_\Omega : H^{1}_{0}(\Omega) \to L^{4}(\Omega)$, i.e. 
$$u=i^{*}_\Omega(f)\ \hbox{if and only if}\ 
\langle u, \varphi \rangle = \int_{\Omega} f(x) \varphi(x) \ dx
\ \hbox{
for all}\ \varphi \in H^{1}_{0}(\Omega).
$$
The operator $i^{*}_\Omega : L^{\frac{4}{3}}(\Omega) \to H^{1}_{0}(\Omega)$ is continuous as 
\begin{equation*}
\| i^{*}_\Omega(f) \|_{H^{1}_{0}(\Omega)} \leq S^{-1} \| f \|_{\frac{4}{3}}
\end{equation*}
where $S$ is the best constant for Sobolev embedding.\\
Therefore, the  problem \eqref{pb} can be rewritten as 
\begin{equation}\label{213}
u=i^{*}_\Omega(f(u)+ \varepsilon u),\ u \in H^{1}_{0}(\Omega),
\end{equation}
where $f(u)=|u|^2 \cdot u.$ \\

Given a point $\xi\in\Omega,$ we denote by $P\mathcal{U}_{\delta,\xi}$ the projection of $\mathcal{U}_{\delta,\xi}$ into $H^1_0(\Omega)$, i.e. the unique solution  of 
\begin{eqnarray}\label{Pre0001}
 -\Delta\, P\mathcal{U}_{\delta,\xi}=\mathcal{U}_{\delta,\xi}^{3}\ \hbox{in}\ \Omega,\ 
 P\mathcal{U}_{\delta,\xi}=0\ \hbox{on}\ \partial\Omega.
\end{eqnarray}
\\  It is well known that the following expansion holds (see \cite{R})
$$
P\U:=\U-\mathfrak C\d H(x, \xi)+\mathcal O\left(\d^{3}\right)\quad \mbox{as}\, \d\to 0$$
uniformly with respect to $x\in\Omega$ and $\xi$ in compact sets of $\Omega$ and 
\begin{equation}\label{exp1}
P\U:=\mathfrak C\d G(x, \xi)+\mathcal O\left(\d^{3}\right)\quad \mbox{as}\, \d\to 0\end{equation}
uniformly with respect to $x$ in compact sets of $\Omega\setminus\{\xi\}$ and $\xi$ in compact sets of $\Omega$ where $\mathfrak C:=2\alpha_4\omega=8\sqrt2\pi^2$.\\  

\subsection{The ansatz} 
Let $k \geq 1$ be a fixed integer. We look for a solution to \eqref{213} of the form 
$$
u_\e = \sum_{i=1}^{k} \sigma_{i} P\Ui+\phi_\e, 
$$
where $\sigma_i\in\{-1,1\}$ and the concentration scales $\delta_i=\delta_i(\varepsilon)$ are chosen as
\begin{equation}\label{delta}
\delta_1:=e^{-\frac{8\pi^2 \lambda}{ \e}},\ \d_i:=e^{-\frac{8\pi^2 \lambda}{ \e}}d_i\ \hbox{with}\ \lambda>0\ \hbox{and}\
 d_i>0\ \hbox{for}\  i=2, \ldots, k
\end{equation}  
and the concentration points are $\xi_{i}$ belong to the set
\begin{equation*}
D_\rho = \{ \pmb{\xi} \in \Omega^{*}_k\,\,:\,\, \mbox{dist}(\xi_{i}, \partial \Omega) \geq 2\rho, |\xi_{i} - \xi_{j}| \geq 2\rho,\,\, \forall i,j = 1, \cdots, k, i \neq j \}.
\end{equation*} for some $\rho>0$ small.\
In the following, we agree that $\boldsymbol \delta:=(\delta_1,\dots,\delta_k)$ and $\boldsymbol \xi:=(\xi_1,\dots,\xi_k).$\\
The higher order term  $\phi_\e$ belongs to the space  
\begin{equation*}
\mathcal K^{\perp}_{\pmb{\d}, \pmb{\xi}} =\left \{ \phi \in H^{1}_{0}(\Omega)\,\, :\,\,\ \langle \phi, \psi \rangle = 0\ \forall\ \psi\in \mathcal K_{\pmb{\d}, \pmb{\xi}}\right\},
\end{equation*}
where
\begin{equation*}
\mathcal K_{\pmb{\d}, \pmb{\xi}} = \mbox{span} \{ P \psi_{\delta_{i}, \xi_{i}}^{j}\,\, :\,\, i= 1, \cdots, k, j=0, \cdots, 4\}.
\end{equation*}
Here $P\psi_{\delta_{i}, \xi_{i}}^{j}$ are the orthogonal projections onto $H^1_0(\Omega)$ of the functions
\begin{equation*}
\psi_{\delta_{i}, \xi_{i}}^{j}(x) = \frac{1}{\delta_{i}} \psi^{j} \left(\frac{x-\xi_{i}}{\delta_{i}} \right),\   j=0, \ldots, 4,\ i=1, \ldots, k,
\end{equation*}
where 
\begin{equation*}
\psi^{0}(x) := \Ua(x) + \nabla \Ua(x) \cdot x  = \alpha_4 \frac{|x|^{2}-1}{(1+|x|^{2})^{2}}
\end{equation*}
and 
\begin{equation*}
\psi^{j}(x) : = \frac{\partial \Ua}{\partial x_{j}}(x) = -2\alpha_4 \frac{x_{j}}{(1+|x|^{2})^{2}}, \  j=1, \cdots, 4.
\end{equation*}
generate the space of solutions of the linear equation
\begin{equation*}
-\Delta \psi = 3\Ua^{2} \psi \ \hbox{in} \   \mathbb{R}^{4}.
\end{equation*}
It is useful to recall the well-known expansions
$$
P \psi^{0}_{\delta_{i}, \xi_{i}} =\psi^{0}_{\delta_{i}, \xi_{i}} -\mathfrak C\delta_i H(x, \xi_i)+\mathcal O(\delta_i^2)$$
and for all $j=1, \ldots, 4$
$$
P \psi^{j}_{\delta_{i}, \xi_{i}} =\psi^{j}_{\delta_{i}, \xi_{i}} - \mathfrak C^2\delta_i^2\partial_{\xi_j} H(x, \xi_i)+\mathcal O(\delta_i^3).$$
\subsection{An equivalent system}
Let $\Pi_{\boldsymbol\delta,\boldsymbol\xi}$ and $\Pi^\perp_{\boldsymbol\delta,\boldsymbol\xi}$ denote the orthogonal projections of $H_0^1(\Omega)$ onto $\mathcal K_{\boldsymbol\delta,\boldsymbol\xi}$ and $\mathcal K^\perp_{\boldsymbol\delta,\boldsymbol\xi}$, respectively.
In order to simplify the notations we let also $W_{\pmb{\d}, \pmb{\xi}}:=\sum_{i=1}^k \sigma_iP\Ui$. \\ Since we seek a solution of the form $W_{\pmb{\d}, \pmb{\xi}}+\phi_\e$ then
equation \eqref{213} can be rewritten as the following system of two equations 
\begin{equation}\label{10}
\Pi^{\perp}_{\pmb{\d}, \pmb{\xi}} [ \mathcal{L}_{\pmb{\d}, \pmb{\xi}}(\phi_\e) - \mathcal{E}_{\pmb{\d}, \pmb{\xi}} - \mathcal{N}_{\pmb{\d}, \pmb{\xi}}(\phi_\e)] = 0
\end{equation}
and
\begin{equation}\label{11}
\Pi_{\pmb{\d}, \pmb{\xi}} [\mathcal{L}_{\pmb{\d}, \pmb{\xi}}(\phi_\e) - \mathcal{E}_{\pmb{\d}, \pmb{\xi}} - \mathcal{N}_{\pmb{\d}, \pmb{\xi}}(\phi_\e)] = 0,
\end{equation}
where the linear operator $\mathcal{L}_{\pmb{\d}, \pmb{\xi}}$ is 
$$
\mathcal{L}_{\pmb{\d}, \pmb{\xi}}(\phi_\e) = \phi_\e - i^{*}_\Omega(3\phi_\e W_{\pmb{\d}, \pmb{\xi}}^{2} + \varepsilon \phi_\e),
$$
the error term $\mathcal{E}_{\pmb{\d}, \pmb{\xi}}$ is 
$$
\mathcal{E}_{\pmb{\d}, \pmb{\xi}}= i^{*}_\Omega(W_{\pmb{\d}, \pmb{\xi}}^{3} + \varepsilon W_{\pmb{\d}, \pmb{\xi}}) - W_{\pmb{\d}, \pmb{\xi}}
$$
and the nonlinear term $\mathcal{N}_{\pmb{\d}, \pmb{\xi}}$ is 
$$
\mathcal{N}_{\pmb{\d}, \pmb{\xi}}(\phi_\e) = i^{*}_\Omega ( \phi_{\e}^{3} + 3 \phi_{\e}^{2}W_{\pmb{\d}, \pmb{\xi}}) .
$$

\subsection{Solving equation (\ref{10})}
The solvability of \eqref{10} in terms of $(\boldsymbol\delta, \bd\xi)$ is the first step in the Ljapunov-Schmidt procedure and follows by the proposition below.
\begin{proposition}\label{fixedpoint}
For any $\rho>0$ there exist $C>0$ and $\varepsilon_{0}>0$ such that for any $\varepsilon, \d_i \in (0, \varepsilon_{0})$ and any $\bd\xi\in D_\rho$ there exists a unique $\phi=\phi_{\pmb{\d}, \pmb{\xi}}\in \mathcal K^{\perp}_{\pmb{\d}, \pmb{\xi}}$ solving \eqref{10} and satisfying 
\begin{equation}\label{stimaphi}
\| \phi \| \leq C\( |\pmb{\d}|^2+\e|\pmb{\d}|\).\end{equation}
\end{proposition}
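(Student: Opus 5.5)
We follow the standard Lyapunov--Schmidt scheme: show that the linearized operator is uniformly invertible on $\mathcal K^\perp_{\pmb\d,\pmb\xi}$, estimate the error term, and close with a contraction argument.

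\emph{Step 1: invertibility.} We claim that there exist $c>0$ and $\e_0>0$ such that
\[
\|\Pi^\perp_{\pmb\d,\pmb\xi}\mathcal L_{\pmb\d,\pmb\xi}(\phi)\|\geq c\|\phi\|
\]
for all $\phi\in\mathcal K^\perp_{\pmb\d,\pmb\xi}$, $\pmb\xi\in D_\rho$ and $\e,\d_i\in(0,\e_0)$. We argue by contradiction. Suppose there are sequences $\e_n,\d_{i,n}\to0$, $\pmb\xi_n\in D_\rho$ and $\phi_n\in\mathcal K^\perp$ with $\|\phi_n\|=1$ and $\Pi^\perp\mathcal L(\phi_n)\to0$. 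Write
\[
\phi_n-i^*_\Omega(3W^2\phi_n+\e_n\phi_n)=h_n+\sum_{i,j}c^j_{i,n}P\psi^j_{\d_{i,n},\xi_{i,n}},
\]
with $h_n\to0$. Testing this identity against $P\psi^j_{\d_i,\xi_i}$ shows that the coefficients $c^j_{i,n}$ tend to $0$. This uses the almost-orthogonality of the $P\psi^j$, the bound $\|P\psi^j_{\d_i,\xi_i}\|\sim1$, and the fact that $\int 3\mathcal U_{\d_i,\xi_i}^2\psi^j_{\d_i,\xi_i}\phi_n=\langle\psi^j,\phi_n\rangle+o(1)=o(1)$ by orthogonality.

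Next we blow up around each $\xi_{i,n}$, setting $\tilde\phi_{i,n}(y)=\d_{i,n}\phi_n(\xi_{i,n}+\d_{i,n}y)$. This sequence is bounded in $D^{1,2}(\mathbb R^4)$, so along a subsequence it converges weakly to some $\tilde\phi_i$ solving $-\Delta\tilde\phi_i=3U^2\tilde\phi_i$ in $\mathbb R^4$. Three facts are used here. The bubbles are separated because $|\xi_i-\xi_j|\geq2\rho$. The cross terms satisfy $W^2=\sum_i\mathcal U_{\d_i,\xi_i}^2+o(1)$ in $L^2$. The term $\e_n\phi_n$ is negligible. Orthogonality then forces $\tilde\phi_i\in\ker$ to be orthogonal to every $\psi^j$, hence $\tilde\phi_i=0$. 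It follows that $\int W^2\phi_n^2\to0$. Pairing the equation with $\phi_n$ gives $\|\phi_n\|^2=3\int W^2\phi_n^2+\e_n\|\phi_n\|_2^2+o(1)\to0$, which contradicts $\|\phi_n\|=1$. The bound is therefore uniform in $\pmb\xi$ and in the (independent) scales.

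\emph{Step 2: error estimate.} We show $\|\mathcal E_{\pmb\d,\pmb\xi}\|\leq C(|\pmb\d|^2+\e|\pmb\d|)$. By the continuity of $i^*_\Omega$,
\[
\|\mathcal E\|\leq C\Big\|W^3-\sum_i\sigma_i\mathcal U_{\d_i,\xi_i}^3\Big\|_{4/3}+C\e\|W\|_{4/3}.
\]
For the second term, a direct computation in $\mathbb R^4$ gives $\|\mathcal U_{\d,\xi}\|_{4/3}=O(\d)$, since $\mathcal U\sim\d/|x|^2$ away from the core. For the first term, split $\Omega$ into the balls $B(\xi_i,\rho)$ and their complement. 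On $B(\xi_i,\rho)$ write $W=\sigma_i\mathcal U_i+R_i$, where $R_i=-\mathfrak C\d_iH(\cdot,\xi_i)+\sum_{j\neq i}\sigma_j\mathfrak C\d_jG(\cdot,\xi_j)+O(|\pmb\d|^3)$ is $O(|\pmb\d|)$ in $L^\infty$ by \eqref{exp1}. The dominant contribution is then $\|\mathcal U_i^2R_i\|_{4/3}\leq C|\pmb\d|\,\|\mathcal U_i^2\|_{4/3}$. We need to check that $\|\mathcal U_{\d,\xi}^2\|_{L^{4/3}(B_\rho)}=O(\d)$ up to a logarithm. Since $\int\mathcal U^{8/3}\sim\int\d^{8/3}|x|^{-16/3}$, this is integrable at infinity in $\mathbb R^4$ and scales like $\d^{4/3}$, giving $\|\mathcal U^2\|_{4/3}=O(\d)$. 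The remaining terms $\mathcal U_iR_i^2$ and $R_i^3$ are $O(|\pmb\d|^2)$. On the complement every term is $O(|\pmb\d|^3)$.

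\emph{Step 3: fixed point.} Define $T(\phi)=(\Pi^\perp\mathcal L)^{-1}\Pi^\perp[\mathcal E+\mathcal N(\phi)]$. Hölder and Sobolev give
\[
\|\mathcal N(\phi)\|\leq C(\|\phi\|^3+\|\phi\|^2),
\]
together with the corresponding Lipschitz bound $\|\mathcal N(\phi_1)-\mathcal N(\phi_2)\|\leq C(\|\phi_1\|+\|\phi_2\|)(1+\|\phi_1\|+\|\phi_2\|)\|\phi_1-\phi_2\|$. Hence $T$ is a contraction on the ball $\{\|\phi\|\leq 2C_0(|\pmb\d|^2+\e|\pmb\d|)\}$ once $\e_0$ is small. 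This yields existence and uniqueness together with \eqref{stimaphi}.

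The main obstacle is Step 1. The scales $\d_i$ are arbitrary in $(0,\e_0)$ rather than comparable, so the blow-up has to be performed separately at each point. Every constant must be uniform in $\pmb\xi\in D_\rho$ and in the scale ratios, and controlling the cross terms uses only the separation $|\xi_i-\xi_j|\geq2\rho$.
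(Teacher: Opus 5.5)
Your proposal is correct and follows the standard Lyapunov--Schmidt argument that the paper invokes by citation without giving details: a uniform a priori bound for $\Pi^\perp_{\pmb{\d},\pmb{\xi}}\mathcal L_{\pmb{\d},\pmb{\xi}}$ on $\mathcal K^\perp_{\pmb{\d},\pmb{\xi}}$ obtained by blowing up at each separated point, the error bound $\|\mathcal E_{\pmb{\d},\pmb{\xi}}\|\le C(|\pmb{\d}|^2+\e|\pmb{\d}|)$, and a contraction. You should state explicitly that the a priori estimate gives actual invertibility through the Fredholm alternative, because $\phi\mapsto\Pi^\perp_{\pmb{\d},\pmb{\xi}}i^*_\Omega(3W^2_{\pmb{\d},\pmb{\xi}}\phi+\e\phi)$ is compact on $\mathcal K^\perp_{\pmb{\d},\pmb{\xi}}$; two minor corrections are that the $H$-term in $R_i$ carries a factor $\sigma_i$, and that $\|\mathcal U_{\d,\xi}\|_{4/3}=O(\d)$ holds on the bounded set $\Omega$, not on $\mathbb R^4$.
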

\begin{proof} The proof is standard and we refer to  \cite{MP, PRV, PR} for the details. 
\end{proof}

\subsection{Solving equation (\ref{11})}
Let $u_\e=W_{\pmb{\d}, \pmb{\xi}}+\phi$ where $\phi\in \mathcal K^\bot_{\pmb{\d}, \pmb{\xi}}$ is the function found in 
Proposition \ref{fixedpoint}.   Equation \eqref{11} rewrites as
\begin{equation}\label{r1}
-\Delta u_\e-f(u_\e)-\e u_\e
=\sum_{h=1}^k\sum_{j=0}^4 c_h^j\,f'(\mathcal U_{\delta_h,\xi_h})\psi^j_{\delta_h,\xi_h}
\end{equation}
for some real numbers $c_h^j$ that depend on $\boldsymbol\delta$ and   $\boldsymbol \xi$. 
The second step  in the Ljapunov-Schmidt procedure
 consists in  finding  the rate parameters $\boldsymbol\delta$ and the points $\boldsymbol \xi$ so that all the $c_h^j$'s  in \eqref{r1} are zero and this is done by  solving a reduced problem as stated  in the Proposition below. We omit the proof since it can be obtained as in \cite{PRV}. 
 
 \begin{proposition}\label{propro}
Up to nonzero dimensional constants, as $\e\to0$ one has
\[
c_h^0=
\delta_h^2\tau_\Omega(\xi_h)
-\sum_{j\neq h}\sigma_j\sigma_h\delta_j\delta_hG(\xi_j,\xi_h)
+\frac1{4\omega}\e\delta_h^2\log\delta_h
+o(|\boldsymbol\delta|^2),
\]
for $h=1,\ldots,k$, and
\[
c_h^\ell=
\frac{\partial}{\partial(\xi_h)_\ell}
\left[
\delta_h^2\tau_\Omega(\xi_h)
-2\sum_{j\neq h}\sigma_j\sigma_h\delta_j\delta_hG(\xi_j,\xi_h)
\right]
+o(|\boldsymbol\delta|^2),
\]
for $h=1,\ldots,k$ and $\ell=1,\ldots,4$. The remainders are uniform with respect to $(\boldsymbol\xi,\mathbf d,\lambda)$ in compact subsets of $D_\rho\times(0,+\infty)^{k-1}\times(0,+\infty)$.
\end{proposition}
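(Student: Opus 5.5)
The plan is to test \eqref{r1} against the kernel elements $P\psi^{l}_{\delta_m,\xi_m}$, $m=1,\ldots,k$, $l=0,\ldots,4$. This gives a linear system for the coefficients $c_h^j$. On the left-hand side we compute
\[
\int_\Omega\big(-\Delta u_\e-f(u_\e)-\e u_\e\big)P\psi^{l}_{\delta_m,\xi_m}
\]
to leading order. On the right-hand side we use the scaling identity
\[
\int_\Omega f'(\mathcal U_{\delta_h,\xi_h})\psi^j_{\delta_h,\xi_h}P\psi^l_{\delta_m,\xi_m}
= c_{j}\,\delta_{hm}\delta_{jl}+o(1),
\qquad c_j\neq0 .
\]
It holds because $f'(\mathcal U_\delta)=3\delta^{-2}U^2(\cdot/\delta)$ and $\psi_\delta=\delta^{-1}\psi(\cdot/\delta)$, so the integral is scale invariant. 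Off-diagonal terms vanish in the limit because the points are $2\rho$-separated, the $\psi^j$ are mutually $L^2(U^2)$-orthogonal, and $P\psi-\psi=O(\delta)$. The Gram-type matrix is therefore a small perturbation of an invertible diagonal matrix. Inverting it, each $c_h^j$ equals, up to a nonzero constant, the left-hand side tested against $P\psi^j_{\delta_h,\xi_h}$, plus errors of the same order times $o(1)$. This is what ``up to nonzero dimensional constants'' refers to.

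Next I split the left-hand side into the contribution of $W=W_{\boldsymbol\delta,\boldsymbol\xi}$ and the contribution of $\phi$. For $W$, the identity $-\Delta P\mathcal U=\mathcal U^3$ gives
\[
\int_\Omega\Big(\sum_i\sigma_i\mathcal U_i^3-f(W)-\e W\Big)P\psi^j_{\delta_h,\xi_h}.
\]
This is $\sigma_h$ times, up to constants, $\delta_h\partial_{\delta_h}$ (for $j=0$) or $\delta_h\partial_{(\xi_h)_\ell}$ (for $j=\ell$) of the reduced energy $J_\e(W)$. I will compute it directly using the expansions \eqref{exp1} of $P\mathcal U$ and $P\psi^j$. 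Near $\xi_h$, the function $f(W)-\sigma_h\mathcal U_h^3$ is expanded to first order as
\[
3\mathcal U_h^2\Big(-\mathfrak C\delta_hH(\cdot,\xi_h)+\sum_{i\ne h}\sigma_i\sigma_h\mathfrak C\delta_iG(\cdot,\xi_i)\Big).
\]
Integrating against $\psi^0_{\delta_h,\xi_h}$, with $\int 3U^2\psi^0\neq0$, produces the terms $\delta_h^2\tau_\Omega(\xi_h)$ and $-\sigma_j\sigma_h\delta_j\delta_hG(\xi_j,\xi_h)$. Integrating against $\psi^\ell$, after a Taylor expansion of $H$ and $G$ around $\xi_h$ (the $\psi^\ell$ are odd), produces the $\xi$-derivatives. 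The factor $2$ in front of the interaction sum in $c_h^\ell$ comes from the symmetric contributions of the far bubbles tested against $P\psi^\ell_h$, whose expansion carries $\partial_\xi H$ and $\partial_\xi G$. Away from $\xi_h$ the integrands are $O(\delta^3)$. For the linear term I use that in dimension four
\[
\int_\Omega\mathcal U_{\delta_h,\xi_h}\,\psi^0_{\delta_h,\xi_h}=\frac{\alpha_4^2\omega}{2}\,\delta_h^2\log\delta_h\,(1+o(1)),
\]
since the integrand decays like $|y|^{-4}$ up to $|y|\sim 1/\delta_h$. With the normalisation of the other constants this gives $\frac1{4\omega}\e\delta_h^2\log\delta_h$. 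For $j=\ell$ the linear term is only $O(\e\delta^2)=o(\delta^2)$ by oddness.

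For the $\phi$-contribution, $\langle\phi,P\psi\rangle=0$ kills the gradient term. What remains is bounded by
\[
\|\phi\|\,\big\|\big(f'(W)-f'(\sigma_h\mathcal U_h)\big)P\psi^j_h\big\|_{4/3}
+\|\phi\|\,\|f'(\mathcal U_h)(P\psi^j_h-\psi^j_h)\|_{4/3}
+C\|\phi\|^2+\e\|\phi\|,
\]
where I used $\int f'(\mathcal U_h)\psi^j_h\phi=\langle\psi^j_h,\phi\rangle$-type identities, which vanish up to $O(\delta)\|\phi\|$. By Proposition~\ref{fixedpoint}, $\|\phi\|\le C(|\boldsymbol\delta|^2+\e|\boldsymbol\delta|)$. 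The scaling \eqref{delta} gives $\e\sim|\log\delta|^{-1}$, so every term is $o(|\boldsymbol\delta|^2)$. All constants are controlled uniformly for $\boldsymbol\xi\in D_\rho$ and for $\mathbf d,\lambda$ in compact sets, which gives the uniformity of the remainders.

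The main obstacle is the borderline nature of dimension four. The bound $\e|\boldsymbol\delta|$ on $\phi$ is only logarithmically better than what is needed, and the linear term produces a $\log\delta$ instead of a pure power. One must check carefully that each coupling of $\phi$ with the test functions gains at least an extra $\delta|\log\delta|^{1/2}$ factor. One must also check that the remainder in the $\int\mathcal U\psi^0$ expansion is $O(\e\delta^2)$ rather than of order $\e\delta^2\log\delta$. I would do these integral estimates explicitly in rescaled variables $y=(x-\xi_h)/\delta_h$, splitting into the regions $|y|\le\rho/\delta_h$ and the complement.
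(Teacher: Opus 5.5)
Your scheme is the standard one the paper relies on: test \eqref{r1} against $P\psi^l_{\delta_m,\xi_m}$, invert the almost diagonal Gram matrix, expand the $W$-part and bound the $\phi$-part. For $c_h^0$ it goes through.

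\textbf{The gap is in the $\xi$-components.} Since $\psi^\ell_{\delta,\xi}=\delta\,\partial_{x_\ell}\mathcal U_{\delta,\xi}$, the main term of the identity tested against $P\psi^\ell_{\delta_h,\xi_h}$ appears only at first order in the Taylor expansion of $H(\cdot,\xi_h)$ and $G(\cdot,\xi_i)$ at $\xi_h$. It equals a constant times $\delta_h\,\partial_{(\xi_h)_\ell}\bigl[\delta_h^2\tau_\Omega(\xi_h)-2\sum_{j\ne h}\sigma_j\sigma_h\delta_j\delta_hG(\xi_j,\xi_h)\bigr]$, so it is $O(|\boldsymbol\delta|^3)$. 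This matches your own remark that it is $\delta_h\partial_{(\xi_h)_\ell}J_\e(W)$; the phrase ``up to nonzero dimensional constants'' must absorb this factor $\delta_h$. To obtain the $\xi$-equations of \eqref{sys2} with an $o(1)$ relative error, every remainder in that identity must therefore be $o(|\boldsymbol\delta|^3)$. Your uniform target $o(|\boldsymbol\delta|^2)$ gives nothing there. Three consequences:
\begin{enumerate}
\item Your bound $O(\e\delta^2)$ for the $\e W$ term already exceeds the main term. This is repairable: one more Taylor step gives $O(\e|\boldsymbol\delta|^3)$.
\item In the Gram inversion, the entries coupling the $\psi^0$- and $\psi^\ell$-blocks must be shown to be $o(|\boldsymbol\delta|)$, not just $o(1)$, because $c_m^0=O(|\boldsymbol\delta|^2)$.
\item The real obstruction is the quadratic term $3\int_\Omega W\phi^2P\psi^\ell_{\delta_h,\xi_h}$. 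H\"older with \eqref{stimaphi} gives only $C\|\phi\|^2\sim\e^2|\boldsymbol\delta|^2\sim|\boldsymbol\delta|^2/|\log|\boldsymbol\delta||^2$, which is much larger than $|\boldsymbol\delta|^3$. No rearrangement of H\"older using only \eqref{stimaphi} can fix this. The bound $\|\phi\|\sim\e|\boldsymbol\delta|$ is essentially sharp, being forced by $\Pi^\perp i^*_\Omega(\e W)$, while $\|\mathcal U_{\delta_h,\xi_h}\psi^\ell_{\delta_h,\xi_h}\|_{L^2}$ is scale invariant.
\end{enumerate}
You need more information on $\phi$ than its $H^1$ norm. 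One option is pointwise or weighted estimates near the peaks. The part of $\phi$ driven by $\e W$ behaves like $\e\delta_h\log(1/|x-\xi_h|)$: its $H^1$ norm comes from scale one, while its size on $B(\xi_h,R\delta_h)$ is only $O(\e\delta_h|\log\delta_h|)=O(\delta_h)$. That makes the quadratic term $o(|\boldsymbol\delta|^3)$. A second option is a parity decomposition of that part of $\phi$ around $\xi_h$. A third is to derive the $\xi$-equations from a local translation (Pohozaev) identity on $B(\xi_h,\rho)$, which needs only $C^1$ control of $\phi$ away from the peaks.

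\textbf{Smaller points.}
\begin{enumerate}
\item Your logarithmic integral is off by a factor two. Since $U\psi^0\sim\alpha_4^2|y|^{-4}$, one has $\int_{|y|<\rho/\delta}U\psi^0=\pm\alpha_4^2\omega|\log\delta|+O(1)$, hence $|\int_\Omega\mathcal U_{\delta,\xi}\psi^0_{\delta,\xi}|=\alpha_4^2\omega\,\delta^2|\log\delta|+O(\delta^2)$. Combined with $|\int_{\mathbb R^4}U^2\psi^0|=\frac13\int_{\mathbb R^4}U^3$ and $\int_{\mathbb R^4}U^3=2\alpha_4\omega=\mathfrak C$, this gives exactly $\alpha_4^2\omega/\mathfrak C^2=1/(4\omega)$. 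Your constant would give $1/(8\omega)$. This coefficient fixes the limit $\e\log\delta^{-1}\to8\pi^2\bar\Lambda$ in Theorem~\ref{main}, so it must be computed, not asserted.
\item The factor $2$ in $c_h^\ell$ does not come from the far bubbles. Near $\xi_i$, $i\ne h$, the contributions $\sigma_i\mathcal U_{\delta_i,\xi_i}^3$ of $-\Delta W$ and of $f(W)$ cancel, and what remains is negligible. The relative factor comes from $\nabla_xH(x,\xi_h)|_{x=\xi_h}=\frac12\nabla\tau_\Omega(\xi_h)$.
\item The term $\e\|\phi\|$ in your list is not $o(|\boldsymbol\delta|^2)$ as written. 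You must keep the factor $\|P\psi^0_{\delta_h,\xi_h}\|_{L^{4/3}}=O(\delta_h)$.
\end{enumerate}
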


By Proposition \ref{fixedpoint} and  Proposition \ref{propro}, it follows that the problem reduces to find $\delta_h>0$ and $\xi_h\in\Omega$ for $h=1, \ldots, k$ such that 
$$\left\{\begin{aligned} &\left(\d_h \tau_\Omega(\xi_h)-\sum_{j\neq h}\sigma_h\sigma_j\d_j G(\xi_h, \xi_j)+\frac{1}{8\pi^2}\e\d_h \ln\d_h\right)\left(1+o(1)\right)=0\, \quad &h=1, \ldots, k\\\\
&\left(\d_h\frac{\partial}{\partial(\xi_h)_\ell} \tau_\Omega(\xi_h)-2\sum_{j\neq h}\sigma_h\sigma_j\d_j \frac{\partial}{\partial(\xi_h)_\ell}G(\xi_h, \xi_j)\right)\left(1+o(1)\right)=0\quad &h=1, \ldots, k.\end{aligned}\right.$$
More precisely, taking into account the choice of the $\delta_h$'s in \eqref{delta}, we have to find $\lambda>0,$ $d_2,\dots,d_{k}\in (0,+\infty)$ 
and $(\xi_1,\dots,\xi_k)\in D_\rho$ solution of the  system 
\begin{equation}\label{sys2}\left\{\begin{aligned} &\tau_\Omega(\xi_1)-\sum_{j=2}^k \sigma_1\sigma_jd_j G(\xi_1, \xi_j)-\lambda+o(1)=0\,\\
&d_h \tau_\Omega(\xi_h)-\sigma_1\sigma_hG(\xi_1, \xi_h)-\sum_{j=2\atop j\neq h}^k\sigma_h\sigma_j d_j G(\xi_h, \xi_j)-\lambda d_h+o(1)=0,\ h=2, \ldots, k\\
&\frac{\partial}{\partial(\xi_1)_\ell}\tau_\Omega(\xi_1) -2\sum_{j=2}^k \sigma_1\sigma_jd_j\frac{\partial}{\partial(\xi_1)_\ell} G(\xi_1, \xi_j)+o(1)=0,\ \ell=1, \ldots, 4\\
&d_h \frac{\partial}{\partial(\xi_h)_\ell} \tau_\Omega(\xi_h)-2\sigma_1\sigma_h\frac{\partial}{\partial(\xi_h)_\ell}G(\xi_1, \xi_h)-2\sum_{j=2\atop j\neq h}\sigma_j\sigma_hd_j \frac{\partial}{\partial(\xi_h)_\ell} G(\xi_h, \xi_j)+o(1)=0,\\ &\hskip9truecm h=2, \ldots, k,\   \ell=1, \ldots, 4.\end{aligned}\right.\end{equation}

\subsection{Proof of Theorem \ref{main}}
We aim to solve system \eqref{sys2}. We set
$\mathbf{d}:=(d_2,\ldots,d_k)$ and we  introduce three different functions.
The first one $F_1: D_\rho \times (0, +\infty)^{k-1}\times (0, +\infty)\to\mathbb R$
is related to the first equation in \eqref{sys2}:
$$ F_1(\boldsymbol\xi, \mathbf{d}, \lambda)= \tau_\Omega(\xi_1)-\sum_{j=2}^k \sigma_1\sigma_jd_j G(\xi_1, \xi_j)-\lambda.$$
The second one  $F_2:D_\rho \times (0, +\infty)^{k-1}\times (0, +\infty)\to\mathbb R^{k-1}$
is related to the second $k-1$ equations in \eqref{sys2}:
$$F_2(\boldsymbol\xi, \mathbf{d}, \lambda)=\left(\overline M^{\boldsymbol\sigma}(\boldsymbol\xi)-\lambda \mathtt{Id}\right)\mathbf{d}^T-\overline G^{\boldsymbol\sigma}(\boldsymbol\xi)$$
where $$\overline M^{\boldsymbol\sigma}(\boldsymbol\xi) :=\left(\begin{matrix} \tau_\Omega(\xi_2) & -\sigma_2\sigma_3G(\xi_2, \xi_3) & \ldots & -\sigma_2\sigma_kG(\xi_2, \xi_k)\\
-\sigma_2\sigma_3G(\xi_2, \xi_3) & \tau_\Omega(\xi_3) &\ldots &-\sigma_3\sigma_kG(\xi_3, \xi_k)\\
\ldots &\ldots &\ldots &\ldots\\
\ldots &\ldots &\ldots &\ldots\\
-\sigma_2\sigma_kG(\xi_2, \xi_k) & -\sigma_3\sigma_kG(\xi_3, \xi_k) & \ldots &\tau_\Omega(\xi_k)
\end{matrix}\right)\ \hbox{and}\ \overline G^{\boldsymbol\sigma}(\boldsymbol\xi):=\left(\begin{matrix} \sigma_1\sigma_2G(\xi_1, \xi_2)\\ \sigma_1\sigma_3G(\xi_1, \xi_3) \\
\ldots \\
\ldots \\
\sigma_1\sigma_kG(\xi_1, \xi_k) 
\end{matrix}\right)$$
The third one $F_3:D_\rho \times (0,+\infty)^{k-1}\times(0,+\infty)\to\mathbb R^{4k}$ is related to the last $4k$ equations in \eqref{sys2}. Setting $\widehat{\mathbf d}:=(1,d_2,\ldots,d_k)^T$, we define
$$F_3(\boldsymbol\xi,\mathbf d,\lambda):=\left(\widetilde M^{1,\boldsymbol\sigma}(\boldsymbol\xi)\widehat{\mathbf d},\ldots,\widetilde M^{4,\boldsymbol\sigma}(\boldsymbol\xi)\widehat{\mathbf d}\right).$$
where
$$\widetilde M^{\ell, \boldsymbol\sigma}(\boldsymbol\xi)=\(\widetilde m_{ij}^{\ell, \boldsymbol\sigma}(\xi)\)_{1\leq i, j\leq k}\ \hbox{and}\ \widetilde m_{ij}^{\ell, \boldsymbol\sigma}(\xi):=\left\{\begin{aligned}&\frac{\partial}{\partial(\xi_i)_\ell}  \tau_\Omega(\xi_i)\quad \ \ \ & \mbox{if}\,\, i=j\\
&-2\sigma_i\sigma_j\frac{\partial}{\partial(\xi_i)_\ell}  G(\xi_i, \xi_j)\quad &\mbox{if}\,\, i\neq j.\\
\end{aligned}\right.$$
Finally we also define
$$F(\boldsymbol\xi, \mathbf{d}, \lambda) :=(F_1(\boldsymbol\xi, \mathbf{d}, \lambda), F_2(\boldsymbol\xi, \mathbf{d}, \lambda), F_3(\boldsymbol\xi, \mathbf{d}, \lambda) ).$$
It is clear that  solving \eqref{sys2} is equivalent to finding   $\boldsymbol\xi=\boldsymbol\xi(\varepsilon) \in D_\rho$,  $\lambda=\lambda(\varepsilon)\in (0, +\infty)$ and  $\mathbf{d}=\mathbf d(\varepsilon)=(d_2, \ldots, d_k)\in (0, +\infty)^{k-1}$ so that
\begin{equation}\label{globale}F(\boldsymbol\xi, \mathbf{d}, \lambda ) +o(1)=0,\end{equation}
where $o(1)$ is a continuous function in $(\boldsymbol\xi, \mathbf{d}, \lambda)$ which converges toward zero as $\varepsilon\to0$ 
uniformly  in compact sets of $D_\rho \times (0, +\infty)^{k-1}\times (0, +\infty).$
\\
We shall use  Lemma \ref{isolated} to prove that if $\mathscr K$ is a stable critical set of $\bar\Lambda(\boldsymbol \xi)$ in the sense of Definition \ref{yy1} then there exists a set $\mathscr Z$ such that
$F(\boldsymbol\xi,\mathbf{d},\lambda)=0$ for any $(\boldsymbol\xi,\mathbf{d},\lambda)\in \mathscr Z$ and 
 $\mathtt{deg}\left(F,\Xi,0\right)\not=0$ for an open neighbourhood of $\mathscr Z$.  By the properties of Brouwer degree, we immediately deduce the existence as $\varepsilon$ is small enough of $(\boldsymbol\xi (\varepsilon),\mathbf{d} (\varepsilon),\lambda (\varepsilon))$  close to $\mathscr Z$
solution of \eqref{globale}.\\
First of all, we observe that for any $\boldsymbol\xi\in D_\rho$  there exists a unique $\mathbf{d}=\mathbf{d}(\xi)\in(0, +\infty)^{k-1}$ and
$\lambda=\lambda(\xi)\in (0,+\infty)$ such that 
$$F_1(\boldsymbol\xi, \mathbf{d}(\boldsymbol\xi), \lambda(\boldsymbol\xi))=0\ \hbox{and}\ F_2(\boldsymbol\xi, \mathbf{d}(\boldsymbol\xi), \lambda(\boldsymbol\xi))=0.$$
Indeed, those equations are equivalent to the system
$$\left\{\begin{aligned}& \tau_\Omega(\xi_1)-\sum_{j=2}^k \sigma_1\sigma_jd_j G(\xi_1, \xi_j)-\lambda=0\\
 &\left(\overline M^{\bd\sigma}(\boldsymbol\xi)-\lambda \mathtt {Id}\right)\mathbf{d}^T-\overline G^{\bd\sigma}(\boldsymbol\xi)=0.\end{aligned}\right.$$
A direct computation shows that it has the solution
$$\lambda(\xi):=\bar\Lambda( \boldsymbol\xi)\quad \hbox{and}\quad  \mathbf d(\xi)^T:=\left( \overline M^{\bd\sigma}(\boldsymbol\xi)-\bar\Lambda( \boldsymbol\xi) \mathtt {Id} \right)^{-1}\overline G^{\bd\sigma}(\boldsymbol\xi).$$
Indeed by the first equation we deduce
$$\lambda=\tau_\Omega(\xi_1)-\sum_{j=2}^k \sigma_j d_j G(\xi_1, \xi_j)$$
and combining the two equations we get
\begin{equation}\label{1}M^{\bd\sigma}(\boldsymbol\xi)(1, \mathbf{d})^T=\lambda (1, \mathbf{d})^T.\end{equation}
Now if we multiply \eqref{1} by $\bar{\ff}(\boldsymbol\xi)$ given in \eqref{impMbar} we get
$$\bar\Lambda( \boldsymbol\xi)\langle \ff(\boldsymbol\xi)^T, (1, \mathbf{d})^T\rangle=\langle \ff^T(\boldsymbol\xi), M^{\bd\sigma}(\boldsymbol\xi)(1, \mathbf{d})^T\rangle =\lambda\langle \ff(\boldsymbol\xi)^T, (1, \mathbf{d})^T\rangle,$$  which implies that
$\lambda=\bar\Lambda( \boldsymbol\xi).$ Moreover,  since $\bar\Lambda(\bd\xi)$ is simple then the  matrix $\overline M^{\bd\sigma}(\boldsymbol\xi)-\bar\Lambda( \boldsymbol\xi) \mathtt {Id}$ is invertible (see \cite{PRV} for other details).
At the end, we can show that (reasoning as in \cite{PRV})

\begin{equation}\label{var2}
\nabla_{\xi_h}\bar\Lambda(\boldsymbol\xi)
=
\frac{2\,\bar f_h(\boldsymbol\xi)}
{|\bar{\boldsymbol\ff}(\boldsymbol\xi)|^2}\,
F_{3,h}(\boldsymbol\xi,\mathbf d(\boldsymbol\xi),\bar\Lambda(\boldsymbol\xi)),
\qquad h=1,\ldots,k,
\end{equation}
where $F_{3,h}\in\mathbb R^4$ denotes the block of $F_3$ corresponding to the variable $\xi_h$.
Thus $F_3=0$ if and only if $\nabla\bar\Lambda=0$. Since every
$\bar f_h$ is positive, the two vector fields differ by an invertible positive diagonal
matrix. Consequently they have the same local Brouwer degree (after shrinking the
neighbourhood if necessary).
Since $\mathscr K$ is a stable critical set of $\bar\Lambda$, Definition~\ref{yy1} and \eqref{var2} imply that the reduced location field $F_3(\boldsymbol\xi,\mathbf d(\boldsymbol\xi),\bar\Lambda(\boldsymbol\xi))$ has nonzero degree in a sufficiently small neighbourhood $\Theta$ of $\mathscr K$. Therefore, by Lemma~\ref{isolated}, the set
$\mathscr Z:=\left\{(\boldsymbol\xi,\mathbf d(\boldsymbol\xi),\bar\Lambda(\boldsymbol\xi)):\boldsymbol\xi\in\mathscr K\right\}$ is such that 
$\mathtt {deg}(F, \Xi, 0)\not=0,$  for some neighbourhood $\Xi$ of $\mathscr Z.$ This completes the proof.
\section{A positive--negative pair in a general domain}
We now consider the simplest genuinely nodal configuration: one positive and one negative concentration point. We seek a solution of \eqref{pb} of the form $$u_\e:=P\Ua_{\delta_1, \xi_1}-P\Ua_{\delta_2, \xi_2}+\phi_\e.$$ 

We let the matrix $M^{\boldsymbol\sigma}(\boldsymbol\xi)$ defined as \beq\label{Msigma2}M^{\boldsymbol\sigma}(\boldsymbol\xi) :=\left(\begin{matrix} \tau_\Omega(\xi_1) & G(\xi_1, \xi_2) \\
G(\xi_1, \xi_2) & \tau_\Omega(\xi_2) 
\end{matrix}\right)\eeq The matrix \eqref{Msigma2} has two simple real eigenvalues which are 
$$\Lambda_1(\bd\xi):=\frac{\tau_\Omega(\xi_1) +\tau_\Omega(\xi_2)-\sqrt{(\tau_\Omega(\xi_1) -\tau_\Omega(\xi_2))^2+4 (G(\xi_1, \xi_2))^2 } }{2}$$ and $$\Lambda_2(\bd\xi):=\frac{\tau_\Omega(\xi_1) +\tau_\Omega(\xi_2)+\sqrt{(\tau_\Omega(\xi_1) -\tau_\Omega(\xi_2))^2+4 (G(\xi_1, \xi_2))^2 } }{2}.$$
The corresponding eigenvectors are
$$\bd\ff_1(\bd\xi):=\left(\begin{matrix} 1\\ \ff_{1, 2}(\bd\xi)\end{matrix}\right),\qquad \bd\ff_2(\bd\xi):=\left(\begin{matrix} 1\\ \ff_{2, 2}(\bd\xi)\end{matrix}\right),$$
where $$\ff_{1, 2}(\bd\xi):=\frac{\tau_\Omega(\xi_2)-\tau_\Omega(\xi_1)-\sqrt{(\tau_\Omega(\xi_2)-\tau_\Omega(\xi_1))^2+4(G(\xi_1, \xi_2))^2}}{2G(\xi_1, \xi_2)}$$ and $$\ff_{2, 2}(\bd\xi):=\frac{\tau_\Omega(\xi_2)-\tau_\Omega(\xi_1)+\sqrt{(\tau_\Omega(\xi_2)-\tau_\Omega(\xi_1))^2+4(G(\xi_1, \xi_2))^2}}{2G(\xi_1, \xi_2)}.$$ We remark that $\ff_{1, 2}(\bd\xi)<0$ while $\ff_{2, 2}(\bd\xi)>0$. \\ We are able to show Theorem~\ref{main-two-peak}. 

\begin{proof}[Proof of Theorem~\ref{main-two-peak}]
\begin{itemize}
\item[(i)]
We apply Theorem~\ref{main} with $\boldsymbol\sigma=(1,-1)$. The eigenvalue
$\Lambda_2(\boldsymbol\xi)$ is simple and strictly positive on
$\Omega_2^*$, and its eigenvector $\boldsymbol\ff_2(\boldsymbol\xi)$ has
strictly positive components.

Moreover,
\[
\Lambda_2(\boldsymbol\xi)\longrightarrow+\infty
\qquad\text{as }\boldsymbol\xi\to\partial\Omega_2^*.
\]
Indeed, if one of the points approaches $\partial\Omega$, the corresponding
Robin function diverges to $+\infty$; if $\xi_1-\xi_2\to0$, then
$G(\xi_1,\xi_2)\to+\infty$. Hence
\[
\mathscr K_2:=
\left\{\boldsymbol\xi\in\Omega_2^*:
\Lambda_2(\boldsymbol\xi)=\min_{\Omega_2^*}\Lambda_2\right\}
\]
is a nonempty compact set. Since $\Lambda_2$ is strictly larger than its
minimum outside $\mathscr K_2$, the set $\mathscr K_2$ is a strict local
minimum set and therefore a stable critical set in the sense of
Definition~\ref{yy1}. Theorem~\ref{main} yields the required family, with
$\boldsymbol\xi_\varepsilon\to\boldsymbol\xi^*\in\mathscr K_2$.
\item[(ii)] Let
\begin{equation}\label{4.1}
u_\varepsilon(x)
=
P \mathcal U_{\delta_{1,\varepsilon},\xi_{1,\varepsilon}}(x)
-
P \mathcal U_{\delta_{2,\varepsilon},\xi_{2,\varepsilon}}(x)
+
\phi_\varepsilon(x)
\end{equation}
be a solution to problem \eqref{pb} such that $\phi_\varepsilon$ satisfies \eqref{stimaphi},
$$\delta_{1,\varepsilon}=e^{-\frac{8\pi^2}{\e}\Lambda_2(\xi_{1, \e}, \xi_{2, \e})},\quad \delta_{2,\varepsilon}=e^{-\frac{8\pi^2}{\e}\Lambda_2(\xi_{1, \e}, \xi_{2, \e})}\ff_{2, 2}(\bd\xi_\e),
$$
where $\xi_{i,\varepsilon}\to\xi_i^*$ and $\ff_{2, 2}(\bd\xi_\e)\to \ff_{2, 2}(\bd\xi^*)$ as $\e\to 0$. We claim that the set
\[
\Omega\setminus\{x\in\Omega:u_\varepsilon(x)=0\}
\]
has exactly two connected components. Indeed, the function
\[
u_{1,\varepsilon}(y)
:=
\delta_{1,\varepsilon}u_\varepsilon(\delta_{1,\varepsilon}y+\xi_{1,\varepsilon}),
\qquad
y\in\Omega_{1,\varepsilon}
:=
\frac{1}{\delta_{1,\varepsilon}}
(\Omega-\xi_{1,\varepsilon})
\]
solves
\[
\begin{cases}
-\Delta u_{1,\varepsilon}
= (|u_{1,\varepsilon}|^{2}+\e\delta_{1, \e}^2 ) u_{1,\varepsilon}
& \text{in }\Omega_{1,\varepsilon},\\
u_{1,\varepsilon}=0
& \text{on }\partial\Omega_{1,\varepsilon}.
\end{cases}
\]
Since, as $\varepsilon\to0$,
$\Omega_{1,\varepsilon}\to\mathbb R^4$
and
$u_{1,\varepsilon}\to \mathcal U_{1,0}$ in $D^{1,2}(\mathbb R^4)$,
by standard regularity theory it follows that
\[
u_{1,\varepsilon}\to \mathcal  U_{1,0}
\quad\text{in }C^1_{\mathrm{loc}}(\mathbb R^N).
\]
In particular, for every fixed $r>0$ there exists $\varepsilon_0>0$
such that for every $\varepsilon\in(0,\varepsilon_0)$,
\[
u_{1,\varepsilon}(y)>0
\qquad\forall y\in B(0,r),
\]
or equivalently
\begin{equation}\label{4.2}
u_\varepsilon(x)>0,
\qquad
x\in B(\xi_{1,\varepsilon},r\delta_{1,\varepsilon}).
\end{equation}
Analogously,
\begin{equation}\label{4.3}
u_\varepsilon(x)<0,
\qquad
x\in B(\xi_{2,\varepsilon},r\delta_{2,\varepsilon}).
\end{equation}
Then, $u_\e(x)$ has at least two nodal domains, namely
$\Omega\setminus\{u_\varepsilon=0\}$
has at least two connected components
$\Omega_\varepsilon^+\subset\{u_\varepsilon>0\}$
and
$\Omega_\varepsilon^-\subset\{u_\varepsilon<0\}$.
Moreover, by \eqref{4.2} and \eqref{4.3} it follows that
\[
B(\xi_{1,\varepsilon},r\delta_{1,\varepsilon})
\subset
\Omega_\varepsilon^+,
\]
and
\[
B(\xi_{2,\varepsilon},r\delta_{2,\varepsilon})
\subset
\Omega_\varepsilon^-.
\]
Next we show that $u_\e$ has not more than two nodal domains for $\e$ small. Indeed, set
\[
\Omega_\varepsilon
:=
\Omega
\setminus
\Bigl(
B(\xi_{1,\varepsilon},r\delta_{1,\varepsilon})
\cup
B(\xi_{2,\varepsilon},r\delta_{2,\varepsilon})
\Bigr).
\]
Suppose by contradiction that there exists a third nodal domain
$\omega_\varepsilon$. Since the two balls above are contained in the two
distinguished nodal domains, necessarily
$\omega_\varepsilon\subset\Omega_\varepsilon$. Up to replacing
$u_\varepsilon$ by $-u_\varepsilon$ on $\omega_\varepsilon$, we may assume
that $u_\varepsilon>0$ there. Then $u_\varepsilon$ solves
\begin{equation}\label{pbae}
\begin{cases}
-\Delta u_\varepsilon
=
a_\varepsilon(x)u_\varepsilon
& \text{in }\omega_\varepsilon,\\
u_\varepsilon>0
& \text{in }\omega_\varepsilon,\\
u_\varepsilon=0
& \text{on }\partial\omega_\varepsilon,
\end{cases}
\end{equation}
where
\[
a_\varepsilon:=|u_\varepsilon|^{2}+\e
\]
We compute
\[
\begin{aligned}
\|a_\e \|_{L^{2}(\Omega_\varepsilon)}
&\lesssim
\|\mathcal U_{\delta_{1,\varepsilon},\xi_{1,\varepsilon}}^{2}
\|_{L^{2}(\Omega_\varepsilon)}
+\|
\mathcal  U_{\delta_{2,\varepsilon},\xi_{2,\varepsilon}}^{2}\|_{L^{2}(\Omega_\varepsilon)}
+
\|
|\phi_\varepsilon|^{2}\|_{L^2(\Omega_\varepsilon)}+\e
\\
&\leq c_1
\left(
\int_{|y|\geq r}
\frac{1}{(1+|y|^2)^4}\,dy
\right)^{1/2}
+
c_2
\|\phi_\varepsilon\|_{L^{4}(\Omega)}^{2}+c_3\e.
\end{aligned}
\]
We choose $r$ so large that
$$
c_1
\left(
\int_{|y|\geq r}
\frac{1}{(1+|y|^2)^4}\,dy
\right)^{1/2}
\leq
\frac{S_4^{-2}}{6},
$$
where $S_4$ is the best Sobolev constant.

Fixing such an $r$, there exists $\varepsilon_0>0$ such that,
for every $\varepsilon\in(0,\varepsilon_0)$,
$$
c_2
\|\phi_\varepsilon\|_{L^{4}(\Omega)}^{2}
\leq
\frac{S_4^{-2}}{6}\quad \mbox{and}\quad c_3\e\leq \frac{S_4^{-2}}{6}.
$$
Hence we get that
\begin{equation}\label{stimaae}
\|a_\e\|_{L^{2}(\Omega_\varepsilon)}<S_4^{-2}.
\end{equation}
From \eqref{pbae} we get that
\[
\|u_\e\|^2
=
\int_{\omega_\e} a_\e (x) u^2_\e\, dx
\leq
\|a_\e \|_{L^{2}}
\|u_\e\|_{L^{4}}^2
\leq
S_4^2
\|a_\e\|_{L^{2}}
\|u_\e\|^2.
\]
Since, by \eqref{stimaae}
\[
S_4^2\|a_\e\|_{L^{N/2}}<1,
\]
we obtain a contradiction. Hence the problem \eqref{pbae} does not have any solution. 
\item[(iii)] Let $u_\varepsilon$ be as in \eqref{4.1} and assume that $\tau_\Omega(\xi_1^*)=\tau_\Omega(\xi_2^*)$. Let $\bd\xi_\e=(\xi_{1, \e}, \xi_{2, \e})$. Then, as $\e\to 0$, $$\Lambda_2(\bd\xi_{\e}) \to \Lambda_2(\bd\xi^*),\quad \mbox{and}\quad \ff_{2, 2}(\bd\xi_\e)\to \ff_{2, 2}(\bd\xi^*)=1. 
$$
We need to understand the behavior of $u_\e$ defined in \eqref{4.1} outside the concentration points.\\
Indeed, first, we remark that, by \eqref{exp1} we have that

$$
u_\e(x)=\mathfrak C e^{-\frac{8\pi^2}{\e}\Lambda_2(\bd\xi_\e)}\left(G(x, \xi_{1, \e})-\ff_{2, 2}(\bd\xi_\e)G(x, \xi_{2, \e})+o(1)\right)$$
and then, as $\e\to 0$,
$$
\frac{1}{e^{-\frac{8\pi^2}{\e}\Lambda_2(\bd\xi_\e)}}u_\e(x)\to \mathfrak C \left(G(x, \xi_{1}^*)-G(x, \xi_{2}^*)\right)$$
pointwise.
Set
\[
q_\varepsilon:=
e^{-\frac{8\pi^2}{\varepsilon}\Lambda_2(\boldsymbol\xi_\varepsilon)}
=\delta_{1,\varepsilon}.
\]
We first record the quantitative estimate
$$
\bigl\||u_\varepsilon|^3+\varepsilon|u_\varepsilon|\bigr\|_{L^1(\Omega)}
\le Cq_\varepsilon .
$$
Indeed, using the decomposition \eqref{4.1}, the estimate
\eqref{stimaphi}, and $\delta_{2,\varepsilon}/q_\varepsilon\to1$, we obtain
\[
\begin{aligned}
\bigl\||u_\varepsilon|^3+\varepsilon|u_\varepsilon|\bigr\|_{L^1(\Omega)}
&\le C\sum_{i=1}^2\int_\Omega
\mathcal U_{\delta_{i,\varepsilon},\xi_{i,\varepsilon}}^3\,dx
+C\int_\Omega|\phi_\varepsilon|^3\,dx\\
&\quad
+C\varepsilon\sum_{i=1}^2\int_\Omega
\mathcal U_{\delta_{i,\varepsilon},\xi_{i,\varepsilon}}\,dx
+C\varepsilon\int_\Omega|\phi_\varepsilon|\,dx\\
&\le C q_\varepsilon.
\end{aligned}
\]

We shall also use the following standard consequence of the
Lyapunov--Schmidt estimate and boundary elliptic regularity:
for every compact set
$K\Subset\overline\Omega\setminus\{\xi_1^*,\xi_2^*\}$,
\begin{equation}\label{C1-away}
\frac{\phi_\varepsilon}{q_\varepsilon}\longrightarrow0
\qquad\text{in }C^1(K).
\end{equation}
For completeness, one may obtain \eqref{C1-away} by applying local (and,
near $\partial\Omega$, boundary) $\varepsilon$--regularity to
$\phi_\varepsilon/q_\varepsilon$. Indeed
$\|\phi_\varepsilon/q_\varepsilon\|_{H_0^1(\Omega)}
=O(q_\varepsilon+\varepsilon)\to0$, while on sets staying a fixed
distance from the concentration points the approximate solution is
$O(q_\varepsilon)$; the equation for the remainder then gives the
required bootstrap.

Combining \eqref{exp1} (in its $C^1$ version away from the pole) with
\eqref{C1-away}, we obtain
\begin{equation}\label{C1-green-limit}
\frac{u_\varepsilon}{q_\varepsilon}
\longrightarrow
\mathfrak C\bigl(G(\cdot,\xi_1^*)-G(\cdot,\xi_2^*)\bigr)
\quad\text{in }
C^1_{\mathrm{loc}}
\bigl(\overline\Omega\setminus\{\xi_1^*,\xi_2^*\}\bigr).
\end{equation}
In particular, the convergence holds uniformly for the normal
derivatives on $\partial\Omega$.

Let, now, $$
g
:=
\mathfrak C \Bigl[
G(\cdot,\xi_1^*)
-
G(\cdot,\xi_2^*)
\Bigr]
$$

Then, as a consequence of the previous discussion,
\[
\frac{1}{e^{-\frac{8\pi^2}{\e}\Lambda_2(\bd\xi_\e)}}
\frac{\partial u_\varepsilon}{\partial\nu}
\to
\frac{\partial g}{\partial\nu}
\qquad
\text{on }\partial\Omega.
\]

Suppose by contradiction that
\[
\overline{\{x\in\Omega:u_\varepsilon(x)=0\}}
\cap\partial\Omega=\emptyset .
\]
Since $\partial\Omega$ is connected, a sufficiently thin tubular
neighbourhood of $\partial\Omega$ inside $\Omega$ is connected. The
assumption above therefore implies that $u_\varepsilon$ has a constant
sign in that neighbourhood. By the Hopf boundary lemma (with $\nu$ the
outward normal),
$\partial_\nu u_\varepsilon$ has a fixed strict sign on
$\partial\Omega$. Passing to the limit in \eqref{C1-green-limit}, we
deduce that $\partial_\nu g$ has one sign on $\partial\Omega$.

On the other hand, since
$-\Delta_xG(x,\xi)=\delta_\xi$ and $G(\cdot,\xi)=0$ on
$\partial\Omega$,
\[
\int_{\partial\Omega}\partial_\nu G(x,\xi)\,d\sigma=-1.
\]
Consequently
\[
\int_{\partial\Omega}\partial_\nu g\,d\sigma=0.
\]
A continuous function with one sign and zero integral must vanish
identically; hence
\[
\partial_\nu G(\cdot,\xi_1^*)
=
\partial_\nu G(\cdot,\xi_2^*)
\qquad\text{on }\partial\Omega.
\]
This is impossible. Indeed, for every harmonic function $h$ in
$\Omega$, Green's representation formula gives
\[
h(\xi)
=
-\int_{\partial\Omega}
h(x)\,\partial_\nu G(x,\xi)\,d\sigma(x).
\]
Thus the preceding equality would imply
$h(\xi_1^*)=h(\xi_2^*)$ for every harmonic $h$. Choosing the coordinate
functions $h(x)=x_j$, $j=1,\ldots,4$, yields
$\xi_1^*=\xi_2^*$, contradicting
$\boldsymbol\xi^*\in\Omega_2^*$.

\end{itemize}

\end{proof}

In the case $N\geq 5$, Theorem \ref{main-two-peak}-(i) was proved in \cite{MiPi} (see also \cite{BMP} for another related problem). 

\section{Symmetric configurations with more than two peaks}
For three or more peaks, the finite-dimensional problem is no longer tractable in a general domain. We therefore impose symmetries that reduce the location and scale parameters to a low-dimensional problem. This section proves the configurations collected in Theorem~\ref{main-configurations}. We treat the configurations in increasing order of complexity: alternating peaks on a regular polygon, two regular polygons in orthogonal planes, one central peak surrounded by a polygon of peaks of the opposite sign, and finally aligned three-, four-, and five-peak configurations. In each case the strategy is the same: identify the relevant symmetry-reduced interaction matrix, select a simple positive eigenvalue with a positive eigenvector, and exhibit a stable critical set for that eigenvalue.

\subsection{Alternating peaks on a regular polygon}

Let $k\ge2$ be even. Suppose that $\Omega\subset\mathbb R^4=\mathbb R^2\times\mathbb R^2$
is invariant under rotations in the first coordinate plane and under the reflections
$x_3\mapsto-x_3$, $x_4\mapsto-x_4$. For $\rho$ in the radial interval of $\Omega$, set
\[
\xi_j(\rho)=\bigl(\rho e^{2\pi\mathtt i(j-1)/k},0\bigr),
\qquad j=1,\ldots,k.
\]
We seek a solution with alternating signs,
\begin{equation}\label{solpalla1}
u_\varepsilon(x)\sim
\sum_{j=1}^k(-1)^{j+1}
P\mathcal U_{\delta,\xi_j(\rho)}(x),
\qquad
\delta=e^{-\frac{8\pi^2\lambda}{\varepsilon}}.
\end{equation}

The natural symmetry is anti-invariance under the rotation
$R_{2\pi/k}$ in the $(x_1,x_2)$-plane, i.e.
 $
u(R_{2\pi/k}x)=-u(x),
$
together with evenness in $x_3$ and $x_4$. Equivalently, the solution is invariant
under $R_{4\pi/k}$. In this symmetry class all concentration scales are equal and
all centers are determined by the single radial parameter $\rho$.

The symmetry-reduced finite-dimensional problem is
\[
\lambda=\Lambda_k(\rho),\qquad \Lambda_k'(\rho)=0,
\]
where
$$
\Lambda_k(\rho)
=
\tau_\Omega(\xi_1(\rho))
-\sum_{j=1}^{k-1}(-1)^j
G(\xi_1(\rho),\xi_{j+1}(\rho)).
$$
Thus it is enough to find a stable critical set of $\Lambda_k$ on which
$\Lambda_k>0$. We verify this first for the ball and then for an annulus.

\subsubsection{The case of the ball}
Let us consider  the ball of radius $R$
\begin{equation*}
B_R = \{ x \in \mathbb{R}^{4} : |x|<R \}.
\end{equation*} 
The Robin function and the Green function in the ball of radius $R$  are explicit and given by
$$
\tau_R(x):=c_4\frac{R^{2}}{(R^2-|x|^2)^{2}},\qquad c_4:=\frac{1}{2\omega}=\frac{1}{4\pi^2}.$$
and 
$$
G_{R}(x, y)=c_4\left(\frac{1}{|x-y|^{2}}-\frac{1}{|(|x|/R)y-(R/|x|)x|^{2}}\right).\\$$ 
In what follows we let $R=1$. Since the common factor $c_4>0$ multiplies both the Green and Robin functions, we suppress it in the dimensionless computations below. Thus every reduced eigenvalue computed in this normalization must be multiplied by $c_4$ when the original Green-function normalization is restored.
Then we get that
$$\begin{aligned}
\Lambda(\rho)&:=\frac{1}{(1-\rho^2)^2}-\sum_{j=1}^{k-1}(-1)^j \left(\frac{1}{|\xi_1(\rho)-\xi_{j+1}(\rho)|^2}-\frac{1}{\left|\rho\xi_{j+1}(\rho)-\frac 1 \rho \xi_1(\rho)\right|^2}\right)\\
&=\frac{1}{(1-\rho^2)^2}-\sum_{j=1}^{k-1}(-1)^j \left(\frac{1}{4\rho^2\sin^2\frac{\pi j}{k}}-\frac{1}{1+\rho^4-2\rho^2\cos\frac{2\pi j}{k}}\right)\end{aligned}$$
We use Lemma \ref{sommasin2} and Lemma \ref{sommaab} with $a=1+\rho^4$ and $b=2\rho^2$ (easily we get that $a>b$). Then, since $a^2-b^2 =(1-\rho^4)^2$ and $a-b=(1-\rho^2)^2$ and $r=\frac{a-\sqrt{a^2-b^2}}{b}=\rho^2$, we get ($k$ is even) 
$$\Lambda(\rho)=\frac{k^2+2}{24\rho^2}+\frac{2k}{1-\rho^4}\frac{\rho^k}{1-\rho^{2k}}.$$ 
It follows immediately that
\[
\Lambda_k(\rho)>0\qquad\text{for every }\rho\in(0,1),
\]
and
\[
\Lambda_k(\rho)\longrightarrow+\infty
\quad\text{as }\rho\to0^+\ \text{or}\ \rho\to1^-.
\]
Hence the set
\[
\mathscr K_k^{\rm ball}:=
\left\{\rho\in(0,1):
\Lambda_k(\rho)=\min_{(0,1)}\Lambda_k\right\}
\]
is nonempty and compact. Since it is the complete set of global minimizers,
it is a strict local minimum set and therefore a stable critical set.

\begin{proposition}\label{prop-alternating-ball}
Let $k\ge2$ be even. For $\varepsilon>0$ sufficiently small, problem
\eqref{pb} in the unit ball admits a sign-changing solution with $k$
concentration points at the vertices of a regular $k$-gon, with alternating
signs as in \eqref{solpalla1}. Along a subsequence,
$\rho_\varepsilon\to\rho_0\in\mathscr K_k^{\rm ball}$ and
\[
\varepsilon\log\delta_\varepsilon^{-1}
\longrightarrow
8\pi^2 c_4\,\Lambda_k(\rho_0),
\]
where $\Lambda_k$ is the dimensionless function displayed above and
$c_4=1/(4\pi^2)$ restores the original Green-function normalization.
\end{proposition}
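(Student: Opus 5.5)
The plan is to prove Proposition~\ref{prop-alternating-ball} by running the proof of Theorem~\ref{main} inside the symmetry class described above, so that the reduced problem collapses to the one–variable function $\Lambda_k$. Let $\mathcal G$ be the group generated by the reflections $x_3\mapsto-x_3$, $x_4\mapsto-x_4$ and by $R_{2\pi/k}$ acting through the twisted action $u\mapsto -u\circ R_{2\pi/k}$. We work in the closed subspace $H_{\mathcal G}$ of $H_0^1(B_1)$ of functions invariant under this action. Since $k$ is even, $(-1)^k=1$, so the action is well defined. The ansatz \eqref{solpalla1} lies in $H_{\mathcal G}$ whenever all scales equal $\delta$ and the centers are $\xi_j(\rho)$. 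The energy and the operator $i^*_{B_1}$ are equivariant, so by the principle of symmetric criticality every critical point in $H_{\mathcal G}$ solves \eqref{pb}.

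Next we carry out the reduction of Section~\ref{sec1} in $H_{\mathcal G}$. The kernel $\mathcal K_{\boldsymbol\delta,\boldsymbol\xi}\cap H_{\mathcal G}$ is spanned by symmetrized combinations of $P\psi^0_{\delta,\xi_j}$ and of the radial derivatives in the $(x_1,x_2)$-plane. The angular derivatives are killed by the rotation symmetry, and the $x_3,x_4$ derivatives are killed by the evenness. Proposition~\ref{fixedpoint} applies verbatim in $H_{\mathcal G}$, because the projections commute with the group action. The reduced system is therefore two scalar equations, the $h=1$ components of Proposition~\ref{propro}, since the other $h$ follow by symmetry. With $d_j=1$ and $\sigma_j=(-1)^{j+1}$, these become $\Lambda_k(\rho)-\lambda+o(1)=0$ and $\Lambda_k'(\rho)+o(1)=0$. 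Two checks are needed here. For the second equation, the radial derivative of $\tau(\xi_1)-2\sum_j\sigma_1\sigma_jG(\xi_1,\xi_j)$ with respect to $\xi_1$ equals $\Lambda_k'(\rho)$, because $\rho\mapsto\Lambda_k(\rho)$ moves all points simultaneously and each pair is counted twice. For the first equation, $\Lambda_k$ is the eigenvalue of $M^{\boldsymbol\sigma}$ for the eigenvector $(1,\ldots,1)$: by circulant symmetry $M^{\boldsymbol\sigma}(\boldsymbol\xi(\rho))(1,\ldots,1)^T=\Lambda_k(\rho)(1,\ldots,1)^T$, and this is also the origin of \eqref{var2}.

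We then apply a degree argument. On the set $\mathscr K_k^{\rm ball}$ of global minimizers, which is compact in $(0,1)$, the field $\rho\mapsto\Lambda_k'(\rho)$ has nonzero degree on a small neighbourhood $\Theta$: the degree equals that of the gradient of a function with a strict minimum set, namely $1$. Lemma~\ref{isolated} then gives nonzero degree for $(\rho,\lambda)\mapsto(\Lambda_k(\rho)-\lambda,\Lambda_k'(\rho))$ near $\{(\rho,\Lambda_k(\rho)):\rho\in\mathscr K_k^{\rm ball}\}$, because the $\lambda$-direction contributes a factor $-1$. For small $\varepsilon$ the $o(1)$ perturbation preserves the degree, which produces $(\rho_\varepsilon,\lambda_\varepsilon)$. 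Extracting a subsequence gives $\rho_\varepsilon\to\rho_0\in\mathscr K_k^{\rm ball}$ and $\lambda_\varepsilon\to c_4\Lambda_k(\rho_0)$, where the factor $c_4$ restores the normalization. The statement on $\varepsilon\log\delta_\varepsilon^{-1}$ then follows from \eqref{delta}. The solution is sign-changing by the same local argument as in \eqref{4.2}–\eqref{4.3} applied near $\xi_1$ and $\xi_2$.

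The main obstacle is the justification that the reduction is compatible with the symmetry. Concretely, one must check that the symmetric kernel is exactly two-dimensional, with one dilation mode and one radial mode, and that the $o(1)$ remainders of Proposition~\ref{propro} remain uniform on the fixed-point set. The closed-form evaluation of $\Lambda_k$ via Lemmas~\ref{sommasin2} and~\ref{sommaab}, already carried out above, supplies positivity and the blow-up at $\rho\to0^+$ and $\rho\to1^-$. These properties give compactness of $\mathscr K_k^{\rm ball}$ and keep the points in $D_\rho$, so the degree computation itself is routine.
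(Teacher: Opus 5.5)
Your route is the paper's. You restrict to the symmetry class and reduce to $\lambda=\Lambda_k(\rho)$, $\Lambda_k'(\rho)=0$. You then use the closed form from Lemmas~\ref{sommasin2} and~\ref{sommaab} to get positivity and blow-up at $0^+$ and $1^-$, hence a compact stable set of global minimizers. Your two consistency checks are correct: $(1,\ldots,1)^T$ is an eigenvector of $M^{\boldsymbol\sigma}(\boldsymbol\xi(\rho))$ with eigenvalue $\Lambda_k(\rho)$, and the $\xi_1$-radial equation, with its factor $2$, is exactly $\Lambda_k'(\rho)$.

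\textbf{The gap.} The step you yourself flag as the main obstacle fails with the group you chose: the twisted rotation $u\mapsto -u\circ R_{2\pi/k}$ does \emph{not} kill the angular modes. For every $\theta$, the rotated configuration $\rho\bigl(e^{\mathtt i(\theta+2\pi(j-1)/k)},0\bigr)$ with alternating signs still lies in $H_{\mathcal G}$. Hence its $\theta$-derivative, a symmetrized combination of angular derivatives, lies in $\mathcal K_{\boldsymbol\delta,\boldsymbol\xi}\cap H_{\mathcal G}$. Equivalently, the stabilizer of $\xi_1$ in your group is generated by the reflections in $x_3$ and $x_4$; these remove $\psi^3,\psi^4$ but neither in-plane derivative. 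So the symmetric kernel is three-dimensional, and your reduced system contains an angular equation that the argument never addresses. The paper's description of the symmetry class is equally silent on this point.

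\textbf{The repair.} Add to $\mathcal G$ the reflection $x_2\mapsto-x_2$, acting without a sign. It maps $\xi_j(\rho)$ to $\xi_{k+2-j}(\rho)$ (indices mod $k$), and $(-1)^{k+3-j}=(-1)^{j+1}$ because $k$ is even, so the alternating ansatz is invariant. The twisted action is still a group action: this reflection conjugates $R_{2\pi/k}$ to $R_{2\pi/k}^{-1}$, and the sign $-1$ is its own inverse. Now the stabilizer of $\xi_1=\rho e_1$ contains the reflections in $x_2$, $x_3$ and $x_4$. The symmetric kernel is then spanned exactly by the symmetrized dilation mode $P\psi^0$ and the radial mode $P\psi^1$, as you need.

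\textbf{A smaller point.} With a fixed neighbourhood $\Theta$, the degree argument only shows that limits of $\rho_\varepsilon$ are critical points of $\Lambda_k$ in $\overline\Theta$, not necessarily points of $\mathscr K_k^{\rm ball}$. Here this is harmless. Both $\frac{k^2+2}{24\rho^2}$ and $\frac{2k\rho^k}{(1-\rho^4)(1-\rho^{2k})}$ are convex on $(0,1)$; the latter is a power series with nonnegative coefficients and exponents at least $2$. In fact $\Lambda_k''>0$, so $\mathscr K_k^{\rm ball}$ is a single nondegenerate minimum and $\rho_\varepsilon\to\rho_0$ follows at once.
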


\subsubsection{The case of the annulus}
Let us consider  the annulus (with $b=1$)
\begin{equation*}
\Omega_{a} = \{ x \in \mathbb{R}^{4} : a<|x|<1 \},\ 0<a<1.
\end{equation*} 
Here the Robin function and the Green function of an annulus are explicitly given by (see \cite{GV}) $$
\tau_{a}(x):=\frac{1}{\omega_3}\sum_{m=0}^{\infty}d_mQ_m(|x|)$$ and 
$$
G_a(x, y):=\frac{1}{2\omega_3|x-y|^2}-\frac{1}{\omega_3}\sum_{m=0}^{\infty} Q_m(x, y) Z_m\left(\frac{x}{|x|}, \frac{y}{|y|}\right).$$

Here $d_m:=(m+1)^2$,
$$
Q_m(x, y):=\frac{a^{2m+2} - a^{2m+2}(|x|^{2m+2}+|y|^{2m+2}) + |x|^{2m+2}|y|^{2m+2}}{(2m+2)(|x||y|)^{m+2}(1-a^{2m+2})}$$
$$
Q_m(|x|):=\frac{a^{2m+2} - 2a^{2m+2}|x|^{2m+2} + |x|^{4m+4}}{(2m+2)|x|^{2m+4}(1-a^{2m+2})}$$
and
$Z_m(\zeta, \eta)$ represents the zonal harmonics of degree $m$ which have a particularly simple expression in terms of the Gegenbauer (or ultraspherical) polynomials $C_{m}^{\lambda}$. The latter can be defined in terms of generating functions. If we write (see \cite{SW} p. 148)
\begin{equation*}
(1-2rt+r^{2})^{-\lambda} = \sum_{m=0}^{\infty} C_m^{\lambda}(t)r^{m},
\end{equation*}
where $0 \leq r<1$, $|t| \leq 1$ and $\lambda>0$, then the coefficient $C_m^{\lambda}$ is called Gegenbauer polynomial of degree $m$ associated with $\lambda$. \ \\
Furthermore, from Theorem 2.1 of \cite{GV}, we have that for any $\zeta$ and $\eta$ such that $|\zeta|=1$ and $|\eta|=1$ it holds
\begin{equation}\label{propgege}
Z_{m}(\zeta,\eta) = (m+1)C_{m}^{1}(\zeta\cdot\eta).
\end{equation}
It is also known that $$C_m^1(1)=(m+1),\quad C_m^1(-x)=(-1)^mC_m^1(x).$$
Moreover $Z_{m}$ is invariant by rotation.\\\\ 
For fixed $a\in(0,1)$, the function $\Lambda_k$ diverges at both components
of the boundary of $(a,1)$:
\[
\Lambda_k(\rho)\to+\infty
\qquad\text{as }\rho\to a^+\ \text{or}\ \rho\to1^-.
\]
Hence its global minimum is attained in $(a,1)$. We shall prove
\begin{equation}\label{c1}
\min_{\rho\in(a,1)}\Lambda_k(\rho)>0.
\end{equation}
In particular,
\begin{equation}\label{lambda0}
\begin{aligned}
\Lambda_k(\rho) & = \frac{1}{\omega_3} \Bigg[ \sum_{m=0}^{+\infty} d_{m} Q_{m}(\rho) - \sum_{j=1}^{k-1} (-1)^{j} \Bigg( \frac{1}{2|\xi_1(\rho)-\xi_{j+1}(\rho)|^{2}} - \sum_{m=0}^{+\infty} Q_{m}(\rho) Z_{m}\left( \frac{\xi_1}{|\xi_1|}, \frac{\xi_{j+1}}{|\xi_{j+1}|}\right) \Bigg) \Bigg].
\end{aligned}
\end{equation}
Furthermore, we observe that, by using Lemma \ref{sommasin2}, 
\begin{equation}\label{stimaI1}
 \sum_{j=1}^{k-1} \frac{(-1)^{j}}{|\xi_1(\rho)-\xi_{j+1}(\rho)|^{2}} = \frac{1}{4\rho^{2}}  \sum_{j=1}^{k-1} \frac{(-1)^{j}}{\sin^2 \left(\frac{\pi j}{k}\right)}=-\frac{k^2+2}{24\rho^2}.
\end{equation}
Now, by using \eqref{stimaI1} and \eqref{propgege}, we can rewrite \eqref{lambda0} as follows
\begin{equation*}
\Lambda_k(\rho) = \frac{1}{\omega_3} \Bigg[ \frac{k^2+2}{48\rho^2} + \sum_{m=0}^{+\infty} (m+1)Q_{m}(\rho) \Bigg( (m+1) + \sum_{j=1}^{k-1} (-1)^{j}C_m^1\left(\underbrace{ \frac{\xi_1}{|\xi_1|}\cdot \frac{\xi_{j+1}}{|\xi_{j+1}|}}_{:=\cos\frac{2\pi j}{k}}\right)\Bigg) \Bigg].\\
\end{equation*}
For $\rho\in(a,1)$ one has
\[
Q_m(\rho)
=
\frac{(\rho^{2m+2}-a^{2m+2})^2+a^{2m+2}(1-a^{2m+2})}
{(2m+2)\rho^{2m+4}(1-a^{2m+2})}>0.
\]
Thus, once the coefficients $\alpha_{k,m}$ below are known to be nonnegative,
the positivity of $\Lambda_k$ follows directly from the strictly positive
singular term.

We let
$$
\alpha_{k,m} = (m+1) +\underbrace{\sum_{j=1}^{k-1} (-1)^{j} C_{m}^{1} \left( \cos\frac{2\pi j}{k}\right)}_{:=S_{k, m}}.  
$$
In order to obtain \eqref{c1}, we prove that $\alpha_{k,m} \geq 0$ for all $m \in \mathbb{N}$.  \ \\
Firstly, we observe that, reasoning as in \cite{MRV} - (3.40),
\begin{equation}\label{stimaimportantemk}
\begin{aligned}
\sum_{j=1}^{k-1} (-1)^{j} \cos \left( m\frac{2 \pi j}{k} \right)
&=\frac12\sum_{j=1}^{k-1}
\left[
\cos\left(\left(m+\frac k2\right)\frac{2\pi j}{k}\right)
+
\cos\left(\left(m-\frac k2\right)\frac{2\pi j}{k}\right)
\right]\\
&=\left\{\begin{aligned} &k-1\quad&\mbox{if}\,\, m+\frac k 2 \,\, \mbox{is a multiple of}\,\, k\\
&-1&\ \ \mbox{if}\,\, m+\frac k 2 \,\, \mbox{is not a multiple of}\,\, k.\end{aligned}\right.
\end{aligned}
\end{equation}
We recall that $C_{0}^{1}(x)=1$ (see \cite{Gra} - 8.930 (1.10)) and then
\begin{equation}\label{alphak0}
\begin{aligned}
\alpha_{k,0} & = 1 + \sum_{j=1}^{k-1}(-1)^j= 0
\end{aligned}
\end{equation}
since 
\begin{equation*}
\sum_{j=1}^{k-1} (-1)^{j} = \frac{1-(-1)^k}{2}-1.
\end{equation*}
Instead, using the fact $C_{1}^{1}(x)=2x$ (see \cite{Gra} - 8.930 (2.10)), we have that
\begin{equation}\label{alphak1}
\begin{aligned}
\alpha_{k,1} & =2+ 2\sum_{j=1}^{k-1} (-1)^{j} \cos \left( \frac{2 \pi j}{k} \right)=\left\{\begin{aligned}&0\,\,&\mbox{ for any}\, k>2\\ &4 \,\,&\mbox{ for }\, k=2\end{aligned}\right.
\end{aligned}
\end{equation}
since by \eqref{stimaimportantemk} with $m=1$ we have that $$\sum_{j=1}^{k-1} (-1)^{j} \cos \left( \frac{2 \pi j}{k} \right)=\left\{\begin{aligned}&-1\,\,&\mbox{ for any}\, k>2\\ &1 \,\,&\mbox{ for }\, k=2.\end{aligned}\right.$$ 
Now, if we prove the following monotonicity property
\begin{equation}\label{claim}
\alpha_{k,m} \geq \alpha_{k,m-2}
\end{equation}
for all $m \geq 2$, using \eqref{alphak0} and \eqref{alphak1}, we can conclude that 
$$
\alpha_{k,m} \geq \left\{
\begin{aligned} 
&\alpha_{k,0} \geq 0 \quad \text{if } m \text{ is even} \\
&\alpha_{k,1} \ge 0 \quad \text{if } m \text{ is odd}
\end{aligned}
\right.
$$
and so, the last step in order to obtain the nonnegativity of $\alpha_{k,m}$ is to prove \eqref{claim}. \ \\
First of all we recall the following recursive formula (see (3.50) of \cite{MRV})
\begin{equation*}
C_{m}^{1}(\cos t) = 2\cos(mt) + C_{m-2}^{1}(\cos t).
\end{equation*}
So, if we multiply by $(-1)^{j}$ and we sum over $j$, we obtain that
\begin{equation*}
S_{k,m} = 2 \sum_{j=1}^{k-1} (-1)^{j} \cos \left(m \frac{2 \pi j}{k} \right) + S_{k,m-2}.
\end{equation*}
Now, we can observe that 
\begin{equation*}
\alpha_{k,m-2} = m-1 + S_{k,m-2}
\end{equation*}
and hence by using again \eqref{stimaimportantemk}
$$
\alpha_{k,m}-\alpha_{k,m-2} = 2 + 2 \sum_{j=1}^{k-1} (-1)^{j} \cos \left(m \frac{2 \pi j}{k} \right)\geq 0;
$$
so,
\begin{equation*}
\Lambda_k(\rho) = \frac{1}{\omega_3} \Bigg[ \frac{k^2+2}{48\rho^2} + \sum_{m=0}^{+\infty} (m+1)Q_{m}(\rho) \alpha_{k,m} \Bigg] > 0
\end{equation*}
and the claim holds.

Define
\[
\mathscr K_{k,a}^{\rm ann}:=
\left\{\rho\in(a,1):
\Lambda_k(\rho)=\min_{(a,1)}\Lambda_k\right\}.
\]
The set $\mathscr K_{k,a}^{\rm ann}$ is nonempty and compact and, being the
complete set of global minimizers, is a stable critical set.

\begin{proposition}\label{prop-alternating-annulus}
Let $a\in(0,1)$ and let $k\ge2$ be even. For $\varepsilon>0$ sufficiently
small, problem \eqref{pb} in the annulus $\Omega_a$ admits a sign-changing
solution with $k$ alternating concentration points at the vertices of a
regular $k$-gon. Along a subsequence,
$\rho_\varepsilon\to\rho_0\in\mathscr K_{k,a}^{\rm ann}$.
\end{proposition}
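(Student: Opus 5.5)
The proof will apply Theorem~\ref{main} in a symmetric setting and use the positivity estimate \eqref{c1} obtained above. We work in the closed subspace of $H_0^1(\Omega_a)$ of functions that are even in $x_3$ and $x_4$ and satisfy $u(R_{2\pi/k}x)=-u(x)$. The annulus is invariant under $O(4)$, so this subspace is preserved by $i^*_{\Omega_a}$ and by the nonlinearity $u\mapsto u^3+\varepsilon u$, since the nonlinearity is odd. The ansatz $\sum_j(-1)^{j+1}P\mathcal U_{\delta,\xi_j(\rho)}$ lies in this class because $k$ is even: the rotation $R_{2\pi/k}$ maps $\xi_j$ to $\xi_{j+1}$ and the signs alternate. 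We then run Proposition~\ref{fixedpoint} inside the symmetric space. Uniqueness of $\phi$ forces $\phi$ to be symmetric, and the symmetry cuts the kernel down to the directions $\psi^0$ (the common scale) and the radial derivative in $\rho$. The reduced system \eqref{sys2} therefore collapses to two scalar equations, $\lambda=\Lambda_k(\rho)+o(1)$ and $\Lambda_k'(\rho)+o(1)=0$. Here $\Lambda_k$ is the restriction of the simple eigenvalue of $M^{\boldsymbol\sigma}(\boldsymbol\xi(\rho))$, $\boldsymbol\sigma=(1,-1,\ldots)$, whose eigenvector is $(1,\ldots,1)$. Indeed every row sum of $M^{\boldsymbol\sigma}$ equals $\Lambda_k(\rho)$, because the configuration is a circulant and $\sigma_i\sigma_j$ depends only on $j-i$.

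The key steps, in order, are the following.
\begin{enumerate}
\item Positivity. By \eqref{c1}, $\Lambda_k>0$ on $(a,1)$, so the eigenvalue is positive with positive eigenvector $(1,\ldots,1)$, as Theorem~\ref{main} requires (in its symmetric version).
\item Blow-up at the ends. As $\rho\to a^+$ or $\rho\to1^-$, the term $\tau_a(\xi_1(\rho))\to+\infty$ like $\operatorname{dist}(\xi_1,\partial\Omega_a)^{-2}$. The interaction terms $G(\xi_1,\xi_{j+1})$ stay bounded relative to it: their Green parts vanish near the boundary, and the distances $|\xi_1-\xi_{j+1}|\ge2\rho\sin(\pi/k)$ stay bounded below since $\rho>a$. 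Hence $\Lambda_k\to+\infty$ at both ends.
\item Stability. The set $\mathscr K^{\rm ann}_{k,a}$ of global minimizers is then nonempty and compact in $(a,1)$, and it is a strict minimum set. By the remark after Definition~\ref{yy1}, its degree equals $1$.
\item Existence. The degree argument of Theorem~\ref{main}, applied to the one-dimensional field $\Lambda_k'$ together with the scale equation, gives $(\rho_\varepsilon,\lambda_\varepsilon)$ with $\rho_\varepsilon\to\rho_0\in\mathscr K^{\rm ann}_{k,a}$ along a subsequence. The principle of symmetric criticality (Palais) guarantees that the symmetric critical point is a genuine solution. Since both signs occur, the solution changes sign.
\end{enumerate}

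The main obstacle is step 2, the boundary behaviour of $\Lambda_k$ on the annulus, where the Green function is only given as the series in $Q_m$ and $Z_m$. The first attempt would use the general bounds $H(x,y)\le\tau(x)^{1/2}\tau(y)^{1/2}$ and $\tau(x)\sim c\,\operatorname{dist}(x,\partial\Omega)^{-2}$, which hold for any smooth domain. These give $|G(\xi_1,\xi_{j+1})|\le C(\rho,k)$ uniformly in the regime $\rho\to a,1$, because $G\le \frac{1}{2\omega|x-y|^2}$ and $G\ge0$, so $|G|$ is bounded by the singular part at distance $\ge 2a\sin(\pi/k)$. This avoids the series entirely, so the step should reduce to showing $\tau_a(\xi_1(\rho))\to\infty$, which is standard.
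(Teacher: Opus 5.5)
Your proposal is correct and follows the paper's route. You reduce in the symmetry class to $\lambda=\Lambda_k(\rho)$, $\Lambda_k'(\rho)=0$, take positivity from the series computation \eqref{c1}, use coercivity at $\rho\to a^+$ and $\rho\to1^-$ to make the set of global minimizers a stable critical set, and then invoke Theorem~\ref{main}. Your justification of the coercivity, via $0\le G(x,y)\le \frac{1}{2\omega|x-y|^2}$, $|\xi_1-\xi_{j+1}|\ge 2a\sin\frac{\pi}{k}$ and $\tau_a\to+\infty$ at $\partial\Omega_a$, is a useful detail that the paper only asserts.

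One point needs fixing, and the paper has the same gap. Evenness in $x_3,x_4$ together with anti-invariance under $R_{2\pi/k}$ does not remove the kernel direction generated by rotating the whole polygon in the $(x_1,x_2)$-plane. You should therefore also impose invariance under $x_2\mapsto-x_2$. This reflection preserves $\Omega_a$ and, since $k$ is even, also preserves the alternating ansatz, because it maps $\xi_j$ to $\xi_{k+2-j}$ with the same sign. With it, only the scale and radial equations remain.
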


Moreover the minimum point $\rho_a$ is such that
\begin{equation}\label{condminrhoa}
\frac{k^2+2}{24} = \sum_{m=0}^{+\infty} \Bigg[d_{m} + \sum_{j=1}^{k-1} (-1)^{j} Z_{m} \left( \frac{\xi_1}{|\xi_1|}, \frac{\xi_{j+1}}{|\xi_{j+1}|}\right) \Bigg] Q'_{m}(\rho) \rho^3.
\end{equation}
Therefore, from \eqref{condminrhoa}, we can observe that 
\begin{equation}\label{condminrhoa1}
\frac{k^2+2}{24} \leq \sum_{m=0}^{+\infty} \Bigg[ d_m +\Bigg| \sum_{j=1}^{k-1} (-1)^{j}  Z_{m} \left( \frac{\xi_1}{|\xi_1|}, \frac{\xi_{j+1}}{|\xi_{j+1}|} \right) \Bigg| \Bigg] |Q'_{m}(\rho)| \rho^3  \leq k \sum_{m=0}^{+\infty} d_{m} |Q'_{m}(\rho)| \rho^3
\end{equation}
since (see Proposition $5.27$ of \cite{ABR})
\begin{equation*}
\Bigg| Z_{m} \left( \frac{\xi_1}{|\xi_1|}, \frac{\xi_{j+1}}{|\xi_{j+1}|}\right) \Bigg| \leq d_{m}.
\end{equation*}
Now, recalling that $d_{m}=(m+1)^{2}$ and 
\begin{equation*}
Q'_{m}(\rho)  = \frac{1}{(2m+2)(1-a^{2m+2})} \Bigg[-(2m+4)a^{2m+2} \rho^{-2m-5} + 4a^{2m+2} \rho^{-3} + 2m \rho^{2m-1} \Bigg],
\end{equation*}
we can rewrite \eqref{condminrhoa1} as 
\begin{equation}\label{CONDMIN}
\begin{aligned}
\frac{k^2+2}{24k} & \leq \sum_{m=0}^{+\infty} \frac{m+1}{(1-a^{2m+2})} \Bigg[(m+2)a^{2m+2} \rho^{-2m-2} + 2a^{2m+2} + m \rho^{2m+2} \Bigg]
\\& \leq \left( \frac{1}{1-a^{2}} \right) \sum_{m=0}^{+\infty} (m+1) \Bigg[(m+2)a^{2m+2} \rho^{-2m-2} + 2a^{2m+2} + m \rho^{2m+2} \Bigg]
\\& =  \left( \frac{1}{1-a^{2}} \right) \Bigg[ \frac{2 \rho^{4} a^{2}}{(\rho+a)^{3}} \frac{1}{(\rho-a)^{3}} + \frac{2a^{2}}{(1-a^{2})^{2}} + \frac{2 \rho^{4}}{(1-\rho^2)^{3}} \Bigg].
\end{aligned}
\end{equation}
Consequently, if $\rho_k$ is any sequence of critical points satisfying
\eqref{condminrhoa} (in particular, any sequence of global minimizers), then
\[
\operatorname{dist}(\rho_k,\{a,1\})\longrightarrow0
\qquad\text{as }k\to\infty.
\]
Indeed, the left-hand side of \eqref{CONDMIN} tends to $+\infty$, whereas
the right-hand side stays uniformly bounded on every compact subinterval
of $(a,1)$. We do not need to decide here which boundary component is
selected.

\begin{remark}
We emphasize that the case of alternating-sign peaks in the annulus differs substantially from the case in which all peaks are positive. Indeed, in \cite{MRV} the authors proved that the existence of a solution with all positive peaks is guaranteed provided that the inner hole of the annulus is sufficiently large. By contrast, in the alternating-sign case considered here, no assumption on the size of the hole is required.
\end{remark}
\subsection{Two orthogonal regular polygons in the ball}

Let $\Omega=B_1(0)\subset\mathbb R^4=\mathbb R^2\times\mathbb R^2$ and let
$k\ge1$. We consider two regular $k$-gons lying in the orthogonal coordinate
planes,
\[
\xi_j(\rho)
=
\rho\bigl(e^{2\pi\mathtt i(j-1)/k},0\bigr),
\qquad
\eta_j(\rho)
=
\rho\bigl(0,e^{2\pi\mathtt i(j-1)/k}\bigr),
\qquad j=1,\ldots,k,
\]
with $\rho\in(0,1)$. We seek a solution with positive bubbles at the
$\xi_j$'s and negative bubbles at the $\eta_j$'s.

We work in the symmetry class generated by the rotations of angle $2\pi/k$
in each coordinate plane together with the antisymmetry
\[
u(x',x'')=-u(x'',x').
\]
The latter forces the two polygons to have the same concentration scale.
Thus the ansatz reduces to
\[
u_\varepsilon(x)
\sim
\sum_{j=1}^k P\mathcal U_{\delta,\xi_j(\rho)}(x)
-
\sum_{j=1}^k P\mathcal U_{\delta,\eta_j(\rho)}(x),
\qquad
\delta=e^{-\frac{8\pi^2\lambda}{\varepsilon}}.
\]

In the dimensionless Green-function normalization used throughout this
subsection, the scale equation is
\[
\lambda=\Lambda_k^\perp(\rho),
\]
where
$$
\Lambda_k^\perp(\rho)
=
\lambda_{0,k}(\rho)+k f(\rho),
$$
with
\[
\lambda_{0,k}(\rho)
=
\tau(\rho)
-
\sum_{j=1}^{k-1}G(\xi_1(\rho),\xi_{j+1}(\rho)),
\qquad
f(\rho)
=
G(\xi_1(\rho),\eta_j(\rho))
=
\frac1{2\rho^2}-\frac1{1+\rho^4}.
\]
The quantity $f(\rho)$ is independent of $j$ because the two polygons lie in
orthogonal planes.

Using the explicit Green function of the unit ball and Lemma~\ref{sommaab},
we obtain
$$
\Lambda_k^\perp(\rho)
=
\frac{k(1+\rho^{2k})}
{(1-\rho^4)(1-\rho^{2k})}
-
\frac{k^2-1}{12\rho^2}
+
\frac{k}{2\rho^2}
-
\frac{k}{1+\rho^4}.
$$
If $1\le k\le6$ the function $\Lambda_k^\perp(\rho)$  has a minimizer.
Indeed
\[
\Lambda_k^\perp(\rho)
\sim
\frac{-k^2+6k+1}{12\rho^2}
\qquad\text{as }\rho\to0^+.
\]
Hence
\[
\Lambda_k^\perp(\rho)\to+\infty
\quad\text{as }\rho\to0^+
\qquad\text{if }1\le k\le6.
\]
Moreover,
\[
\Lambda_k^\perp(\rho)\to+\infty
\qquad\text{as }\rho\to1^-.
\]

The positivity of the minimum value follows directly from
\[
\frac{k(1+\rho^{2k})}
{(1-\rho^4)(1-\rho^{2k})}
>
\frac{k}{1+\rho^4},
\]
which yields
\[
\Lambda_k^\perp(\rho)
>
\left(
\frac{k}{2}-\frac{k^2-1}{12}
\right)\frac1{\rho^2}
=
\frac{-k^2+6k+1}{12\rho^2}
>0.
\]

Define
\[
\mathscr K_k^\perp
=
\left\{
\rho\in(0,1):
\Lambda_k^\perp(\rho)
=
\min_{(0,1)}\Lambda_k^\perp
\right\}.
\]
For $1\le k\le6$, the set $\mathscr K_k^\perp$ is nonempty and compact.
Since it is the complete set of global minimizers, it is a strict local
minimum set and therefore stable.
\\

It is worthwhile to remark that  if  $k\geq 7$ the function
$
\Lambda_k^\perp(\rho)
$
is strictly increasing in $(0,1)$.

We first observe that
\[
-\frac{k^2-1}{12\rho^2}
+\frac{k}{2\rho^2}
=
-\frac{k^2-6k-1}{12\rho^2}.
\]
Moreover,
\begin{align*}
\frac{1+\rho^{2k}}
{(1-\rho^4)(1-\rho^{2k})}
-\frac{1}{1+\rho^4}
&=
\frac{(1+\rho^{2k})(1+\rho^4)
-(1-\rho^4)(1-\rho^{2k})}
{(1-\rho^8)(1-\rho^{2k})}
\\
&=
\frac{2(\rho^4+\rho^{2k})}
{(1-\rho^8)(1-\rho^{2k})}.
\end{align*}
Hence,
\[
\Lambda_k^\perp(\rho)
=
2k\frac{\rho^4+\rho^{2k}}
{(1-\rho^8)(1-\rho^{2k})}
-
\frac{k^2-6k-1}{12\rho^2}.
\]

Set
\[
F_k(\rho)
:=
\frac{\rho^4+\rho^{2k}}
{(1-\rho^8)(1-\rho^{2k})}.
\]
Then
\[
(\Lambda_k^\perp)'(\rho)
=
2k F_k'(\rho)
+
\frac{k^2-6k-1}{6\rho^3}.
\]
A direct differentiation gives
\begin{align*}
F_k'(\rho)
={}&
\frac{4\rho^3+2k\rho^{2k-1}}
{(1-\rho^8)(1-\rho^{2k})}
\\
&+
\frac{\rho^4+\rho^{2k}}
{(1-\rho^8)(1-\rho^{2k})}
\left(
\frac{8\rho^7}{1-\rho^8}
+
\frac{2k\rho^{2k-1}}{1-\rho^{2k}}
\right).
\end{align*}
Since $0<\rho<1$, all the terms on the right-hand side
are strictly positive. Therefore,
\[
F_k'(\rho)>0
\qquad\text{for every }\rho\in(0,1).
\]

On the other hand, for $k\geq7$,
\[
k^2-6k-1>0.
\]
Consequently,
\[
(\Lambda_k^\perp)'(\rho)
=
2k F_k'(\rho)
+
\frac{k^2-6k-1}{6\rho^3}
>0
\]
for every $\rho\in(0,1)$ and every $k\geq7$.
Thus $\Lambda_k^\perp$ is strictly increasing in $(0,1)$.
\\

\begin{proposition}\label{prop-orthogonal-polygons}
Let $1\le k\le6$. For $\varepsilon>0$ sufficiently small, problem
\eqref{pb} in the unit ball admits a sign-changing solution with $k$
positive concentration points at the vertices of a regular $k$-gon in one
coordinate plane and $k$ negative concentration points at the vertices of
a regular $k$-gon in the orthogonal coordinate plane. Along a subsequence,
\[
\rho_\varepsilon\to\rho_0\in\mathscr K_k^\perp,
\]
and
\[
\varepsilon\log\delta_\varepsilon^{-1}
\longrightarrow
8\pi^2 c_4\,\Lambda_k^\perp(\rho_0).
\]
\end{proposition}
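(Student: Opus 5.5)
The proof will be an application of Theorem~\ref{main}, carried out inside the symmetry class described before the statement. We work in the closed subspace $H_{k}^\perp\subset H_0^1(B_1)$ of functions invariant under $R_{2\pi/k}$ in the $(x_1,x_2)$-plane and under $R_{2\pi/k}$ in the $(x_3,x_4)$-plane, and satisfying $u(x',x'')=-u(x'',x')$. The nonlinearity $u^3+\varepsilon u$ is odd, and the group acts by isometries preserving $B_1$. By the principle of symmetric criticality, critical points of the energy restricted to $H_k^\perp$ are therefore solutions of \eqref{pb}.

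The ansatz $\sum_j P\mathcal U_{\delta,\xi_j(\rho)}-\sum_jP\mathcal U_{\delta,\eta_j(\rho)}$ lies in $H_k^\perp$, and so do the error terms. We then run the Lyapunov--Schmidt reduction of Section~\ref{sec1} within $H_k^\perp$. Proposition~\ref{fixedpoint} holds verbatim once the kernel $\mathcal K$ is replaced by its symmetric part. That symmetric part consists of the symmetrized $P\psi^0$ direction and the symmetrized radial derivative in $\rho$. The other translations are killed by the symmetry, since any nonradial displacement inside a plane, or any out-of-plane displacement, would break the rotation invariance.

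Two reduced equations remain, corresponding to $c^0$ and to the $\rho$-derivative. We apply Proposition~\ref{propro} with $\sigma_i=+1$ for the $\xi_j$ and $\sigma_i=-1$ for the $\eta_j$, and with all $d_i=1$. The row of $M^{\boldsymbol\sigma}$ for $\xi_1$ has sum $\tau(\xi_1)-\sum_{j\ge2}G(\xi_1,\xi_j)+\sum_j G(\xi_1,\eta_j)=\lambda_{0,k}(\rho)+kf(\rho)$. By symmetry, every row has the same sum. Hence $(1,\ldots,1)$ is an eigenvector of $M^{\boldsymbol\sigma}(\boldsymbol\xi(\rho))$ with eigenvalue $\Lambda_k^\perp(\rho)$, and it has positive components.

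In the symmetric class the scale equation becomes $\lambda=\Lambda_k^\perp(\rho)+o(1)$. For the location equation, we differentiate the quadratic form $\langle M^{\boldsymbol\sigma}\mathbf 1,\mathbf 1\rangle=2k\Lambda_k^\perp(\rho)$ along the one-parameter family, exactly as in \eqref{var2}. This shows that it is proportional to $(\Lambda_k^\perp)'(\rho)+o(1)$. So we do not need simplicity of the eigenvalue in the full space, only in the reduced one-dimensional scale problem. That reduced problem is automatically simple, because there is a single scale $d=1$.

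We use the computations already made. For $1\le k\le 6$, $\Lambda_k^\perp>0$ and $\Lambda_k^\perp\to+\infty$ at both ends of $(0,1)$. Consequently $\mathscr K_k^\perp$ is a nonempty compact set of global minimizers, hence a stable critical set in the sense of Definition~\ref{yy1}; for a one-variable function this means that the degree of $(\Lambda_k^\perp)'$ on a small neighbourhood is nonzero. The reduced map $(\rho,\lambda)\mapsto(\lambda-\Lambda_k^\perp(\rho),(\Lambda_k^\perp)'(\rho))+o(1)$ then has nonzero degree near $\{(\rho,\Lambda_k^\perp(\rho)):\rho\in\mathscr K_k^\perp\}$. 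This gives $(\rho_\varepsilon,\lambda_\varepsilon)$ with $\rho_\varepsilon\to\rho_0\in\mathscr K_k^\perp$ along a subsequence and $\lambda_\varepsilon\to\Lambda_k^\perp(\rho_0)$. Restoring the factor $c_4$ gives $\varepsilon\log\delta_\varepsilon^{-1}=8\pi^2\lambda_\varepsilon\to8\pi^2c_4\Lambda_k^\perp(\rho_0)$. The solution is sign-changing because $u_\varepsilon>0$ near $\xi_1$ and $u_\varepsilon<0$ near $\eta_1$, by the blow-up argument used in Theorem~\ref{main-two-peak}(ii).

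The main obstacle is the justification that, within the symmetric class, the reduced problem really collapses to the two scalar equations, with remainders $o(1)$ uniform on compacts of $\rho\in(0,1)$. In particular one must check that the projected equations $c^\ell_h=0$ in directions not generated by $\partial_\rho$ are automatically satisfied by symmetry. I would handle this by showing that the coefficients $c_h^\ell$ transform equivariantly under the group. The only invariant combinations are then the $c^0$ sum and the radial derivative sum, and all other combinations vanish identically.
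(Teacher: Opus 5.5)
Your strategy is the paper's: a symmetry-reduced version of Theorem~\ref{main}, with eigenvector $(1,\dots,1)$ of eigenvalue $\Lambda_k^\perp(\rho)$ and the stable set of global minimizers. The eigenvector computation, the identification of the radial equation with $(\Lambda_k^\perp)'(\rho)$, and the degree argument in $(\rho,\lambda)$ are fine. The gap is your claim that the invariant part of the kernel is spanned only by the symmetrized $P\psi^0$ and the $\rho$-derivative.

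A rigid rotation of a regular polygon does not break its $2\pi/k$-invariance. The simultaneous rotation $(x',x'')\mapsto(R_\theta x',R_\theta x'')$ preserves $B_1$ and commutes with both $R_{2\pi/k}$'s and with the exchange $S(x',x'')=(x'',x')$. So the $\theta$-derivative of the ansatz, i.e.\ the $e_2$-direction at $\xi_1=\rho e_1$, is a nonzero invariant kernel element for every $k$. Your equivariance argument cannot kill it, precisely because it is invariant, and your $(\rho,\lambda)$ degree computation never addresses its equation. For $2\le k\le 6$ this is easily repaired: also impose evenness in $x_2$ and in $x_4$. These reflections preserve the configuration and are exchanged by $S$. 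The stabilizer of $\xi_1$ then fixes only $\mathbb R e_1$, and the rest of your argument goes through.

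For $k=1$ the gap is fatal. There the rotations are trivial. Any symmetry fixing $\xi_1=\rho e_1$ and respecting the sign pattern must also fix $\eta_1=\rho e_3$, hence fixes the direction $e_3$ at $\xi_1$. So the invariant direction ``move $\xi_1$ along $e_3$ and $\eta_1=S\xi_1$ along $e_1$'' survives in every admissible symmetry class.

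By Proposition~\ref{propro}, its reduced equation has leading term a positive multiple of $\delta^2\,\partial_{x_3}G(\xi_1,\eta_1)$, since the Robin term is radial. With the explicit Green function of the ball,
$\partial_{x_3}G(\rho e_1,\rho e_3)=c_4\bigl(\frac{1}{2\rho^3}-\frac{2\rho}{(1+\rho^4)^2}\bigr)$,
which is strictly positive on $(0,1)$ because $(1+\rho^4)^2>4\rho^4$. Thus the orthogonal positive--negative pair is never a critical configuration. Indeed, at fixed $|\xi|=|\eta|$ the function $G(\xi,\eta)$ increases with $\xi\cdot\eta$, so the pair prefers to be antipodal. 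The case $k=1$ therefore cannot be obtained by this method.

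The paper's sketch makes the same one-parameter reduction without checking the remaining directions, so you inherited this. Still, your proof as written is incomplete for $2\le k\le 6$ and fails for $k=1$.
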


\subsection{One positive peak and a negative regular polygon}

Let $k\ge2$ and let $B=B_1(0)\subset\mathbb R^4$. We consider a configuration consisting of one positive peak at the origin and $k$ negative peaks at the vertices
\[
\xi_j(\rho)=\rho\bigl(e^{2\pi \mathtt i(j-1)/k},0\bigr)\in\mathbb R^2\times\mathbb R^2,
\qquad j=1,\ldots,k,
\]
of a regular polygon, where $0<\rho<1$. We look for a solution of the form
\[
u_\e=P\mathcal U_{\delta,0}-\sum_{j=1}^kP\mathcal U_{\gamma,\xi_j(\rho)}+\phi_\e,
\]
with
\[
\delta=e^{-8\pi^2\lambda/\e},\qquad
\gamma=e^{-8\pi^2\lambda/\e}d,\qquad d>0.
\]

We perform the Lyapunov--Schmidt reduction in the class invariant under the rotation of angle $2\pi/k$ in the $(x_1,x_2)$-plane and under the full orthogonal group acting on the $(x_3,x_4)$-plane. These symmetries fix the central peak at the origin and force the $k$ outer peaks to have the same scale and to lie on a regular polygon of radius $\rho$. Consequently, after solving the infinite-dimensional equation, the remaining scale equations are the two equations corresponding to the central orbit and to one representative of the polygon orbit.

For simplicity we suppress the common Green-function factor $c_4=1/(4\pi^2)$ throughout this subsection; with the original normalization, the matrix and all its eigenvalues are multiplied by $c_4$. Set
\[
\beta(\rho):=G(0,\xi_1(\rho))=\frac1{\rho^2}-1>0
\]
and
\[
\lambda_0(\rho):=\tau(\xi_1(\rho))-
\sum_{j=2}^kG(\xi_1(\rho),\xi_j(\rho)).
\]
The scale equations are
\begin{equation}\label{scale-1k}
\begin{cases}
1+k\beta(\rho)d=\lambda,\\
\beta(\rho)+\lambda_0(\rho)d=\lambda d.
\end{cases}
\end{equation}
Introducing $x=(1,\sqrt{k}\,d)^T$, system~\eqref{scale-1k} is equivalent to the symmetric eigenvalue problem
\begin{equation}\label{matrix-1k}
B_k(\rho)x=\lambda x,
\qquad
B_k(\rho):=
\begin{pmatrix}
1&\sqrt{k}\,\beta(\rho)\\
\sqrt{k}\,\beta(\rho)&\lambda_0(\rho)
\end{pmatrix}.
\end{equation}
Since $\beta(\rho)>0$, the largest eigenvalue
$$
\Lambda_k(\rho):=
\frac{1+\lambda_0(\rho)+
\sqrt{(1-\lambda_0(\rho))^2+4k\beta(\rho)^2}}{2}
$$
is simple for every $\rho\in(0,1)$, and its eigenvector can be chosen with both components strictly positive. More precisely,
\[
d_k(\rho)=
\frac{\lambda_0(\rho)-1+
\sqrt{(1-\lambda_0(\rho))^2+4k\beta(\rho)^2}}
{2k\beta(\rho)}>0.
\]
Moreover, since the off-diagonal entry in~\eqref{matrix-1k} is nonzero,
\begin{equation}\label{Lambda-1k-positive}
\Lambda_k(\rho)>\max\{1,\lambda_0(\rho)\}\ge1.
\end{equation}

We now study the dependence on $\rho$. By Lemma~\ref{sommaab},
\[
\sum_{j=2}^kG(\xi_1(\rho),\xi_j(\rho))
=
\frac{k^2-1}{12\rho^2}
-\frac{k(1+\rho^{2k})}{(1-\rho^4)(1-\rho^{2k})}
+\frac1{(1-\rho^2)^2}.
\]
Since $\tau(\rho)=(1-\rho^2)^{-2}$, it follows that
\begin{equation}\label{lambda0-1k}
\lambda_0(\rho)
=-\frac{k^2-1}{12\rho^2}
+\frac{k(1+\rho^{2k})}{(1-\rho^4)(1-\rho^{2k})}.
\end{equation}
Therefore, as $\rho\to0^+$,
\[
\lambda_0(\rho)=-\frac{A_k}{\rho^2}+O(1),\qquad
\beta(\rho)=\frac1{\rho^2}+O(1),
\qquad A_k:=\frac{k^2-1}{12},
\]
and hence
\begin{equation}\label{asymptotic-1k-zero}
\Lambda_k(\rho)
=\frac{-A_k+\sqrt{A_k^2+4k}}{2\rho^2}+O(1)
\longrightarrow+\infty.
\end{equation}
Notice that the coefficient is strictly positive for every $k\ge2$. On the other hand, from~\eqref{lambda0-1k},
$\lambda_0(\rho)\to+\infty$ as $\rho\to1^-$, and therefore, by~\eqref{Lambda-1k-positive},
\begin{equation}\label{asymptotic-1k-one}
\Lambda_k(\rho)\longrightarrow+\infty
\qquad\text{as }\rho\to1^-.
\end{equation}

Thus $\Lambda_k$ attains its minimum in $(0,1)$. Let
\[
\mathscr K_k^{\,1+k}:=
\left\{\rho\in(0,1):
\Lambda_k(\rho)=\min_{(0,1)}\Lambda_k\right\}.
\]
By~\eqref{asymptotic-1k-zero}--\eqref{asymptotic-1k-one}, this set is nonempty and compact. Since it is the full set of global minimizers, it is a strict local minimum set and hence a stable critical set in the sense of Definition~\ref{yy1}. At every $\rho_0\in\mathscr K_k^{\,1+k}$, the Hellmann--Feynman formula gives
\[
\Lambda_k'(\rho_0)=x_k(\rho_0)^TB_k'(\rho_0)x_k(\rho_0)=0,
\]
where $x_k$ is the normalized positive eigenvector of $B_k$. This is precisely the remaining radial equation in the symmetry-reduced finite-dimensional problem.

We have therefore proved the following result.
\begin{proposition}\label{one-plus-k-polygon}
For every integer $k\ge2$ and for $\e>0$ sufficiently small, problem~\eqref{pb} in the unit ball of $\mathbb R^4$ admits a sign-changing solution with one positive concentration point at the origin and $k$ negative concentration points located at the vertices of a regular polygon in a two-dimensional plane. More precisely, along a subsequence the polygonal radius satisfies
\[
\rho_\e\longrightarrow\rho_0\in\mathscr K_k^{\,1+k},
\]
the ratio between the common outer scale and the central scale converges to $d_k(\rho_0)>0$, and
\[
\e\log\delta_\e^{-1}
\longrightarrow8\pi^2\Lambda_k(\rho_0)\]
after suppressing the common factor $c_4$;
restoring the original Green-function normalization multiplies $\Lambda_k$ by $c_4$.

\end{proposition}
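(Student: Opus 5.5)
The plan is to apply Theorem~\ref{main} inside the symmetry class $\mathcal H$ of functions in $H_0^1(B)$ that are invariant under the rotation $R_{2\pi/k}$ in the $(x_1,x_2)$-plane and under $O(2)$ acting on $(x_3,x_4)$. The discussion preceding the statement has already supplied the analytic ingredients of that theorem. First I will check that the Lyapunov--Schmidt reduction of Section~\ref{sec1} can be carried out equivariantly. The ansatz $P\mathcal U_{\delta,0}-\sum_j P\mathcal U_{\gamma,\xi_j(\rho)}$ belongs to $\mathcal H$, and the nonlinearity $u^3+\varepsilon u$ commutes with the group action, so the operators $\mathcal L$, $\mathcal E$, $\mathcal N$ preserve $\mathcal H$. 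It follows that Proposition~\ref{fixedpoint} produces $\phi\in\mathcal H\cap\mathcal K^\perp$, by uniqueness of the fixed point.

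The next step is to identify which cokernel coefficients $c_h^j$ of Proposition~\ref{propro} survive in $\mathcal H$.
\begin{itemize}
\item At the origin, every translation mode $\psi^\ell$ is odd under some element of the group, so it drops out. Only $\psi^0_{\delta,0}$ remains.
\item On the polygon orbit, symmetry ties the coefficients at $\xi_j$ to those at $\xi_1$. The tangential mode, and the $x_3,x_4$ modes, vanish by the reflection $x_2\mapsto -x_2$ combined with the $O(2)$ invariance. If the $x_2$ reflection is not in the group, one simply adds it to the symmetry class; this is consistent with the configuration.
\item What remains are one dilation mode and the radial mode at $\xi_1$.
\end{itemize}
Consequently the reduced system is exactly \eqref{scale-1k} together with one radial equation.

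Substituting the symmetric parameters into Proposition~\ref{propro}, I will obtain three equations up to $o(1)$. The two dilation equations are $\tau(0)+k\beta d=\lambda$, with $\tau(0)=1$ in the dimensionless normalization, and $\beta+\lambda_0 d=\lambda d$. The radial equation is
\[
\tfrac{d}{d\rho}\bigl[\cdot\bigr]=0,
\]
a weighted combination whose weights, via the Hellmann--Feynman formula, make it a positive multiple of $\Lambda_k'(\rho)$. This is the one-dimensional analogue of \eqref{var2}. The rescaling $x=(1,\sqrt k d)$ symmetrizes the problem into \eqref{matrix-1k}. With this, the hypotheses of Theorem~\ref{main}, in reduced form, hold:
\begin{itemize}
\item $\Lambda_k$ is a simple positive eigenvalue with a positive eigenvector, by \eqref{Lambda-1k-positive} and $d_k(\rho)>0$;
\item $\mathscr K_k^{1+k}$ is a compact set of global minimizers, by \eqref{asymptotic-1k-zero}--\eqref{asymptotic-1k-one}, hence it is stable.
\end{itemize}
The degree argument of the proof of Theorem~\ref{main}, using Lemma~\ref{isolated}, then yields $(\rho_\varepsilon,d_\varepsilon,\lambda_\varepsilon)\to(\rho_0,d_k(\rho_0),\Lambda_k(\rho_0))$.

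By the principle of symmetric criticality, the solution found in $\mathcal H$ solves \eqref{pb}. The scale relation $\delta=e^{-8\pi^2\lambda/\varepsilon}$ gives the stated limit of $\varepsilon\log\delta_\varepsilon^{-1}$. Sign change holds because, as in the proof of Theorem~\ref{main-two-peak}(ii), $u_\varepsilon>0$ near the origin and $u_\varepsilon<0$ near $\xi_1$.

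I expect the main obstacle to be the bookkeeping that ensures no additional cokernel directions survive. Ruling out the tangential and transverse modes at $\xi_1$ requires checking that the group really contains reflections fixing $\xi_1$ that flip those modes. I would enlarge the group by the reflection $x_2\mapsto-x_2$ if needed. A second point to verify is that the radial equation is indeed proportional, with a nonvanishing factor, to $\Lambda_k'$, so that the stable-set degree transfers.
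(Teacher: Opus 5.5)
Your proposal is correct and takes essentially the same route as the paper. Both proofs use an equivariant reduction that leaves the two scale equations \eqref{scale-1k} and a single radial equation, symmetrize via $x=(1,\sqrt k\,d)^T$ into $B_k(\rho)$, identify the radial equation with a positive multiple of $\Lambda_k'$ through Hellmann--Feynman, and run the degree argument of Theorem~\ref{main} on the set $\mathscr K_k^{\,1+k}$ of global minimizers. Adding the reflection $x_2\mapsto-x_2$ is a worthwhile refinement. The group the paper states (rotation by $2\pi/k$ together with $O(2)$ on $(x_3,x_4)$) leaves the angular phase of the polygon free, and hence leaves a tangential cokernel direction at $\xi_1$, which the paper does not address. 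One small correction: at the origin the $(x_1,x_2)$ translation modes drop out because the group has no nonzero fixed vector in that plane. They do not drop out because each mode is odd under some single element; for odd $k$, $\psi^1$ is not odd under any single element.
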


\begin{remark}
The construction has no analogue of the restriction $k\le6$ occurring in Proposition~\ref{prop-orthogonal-polygons}. Indeed, the coefficient in~\eqref{asymptotic-1k-zero} is positive for every $k$, so the relevant eigenvalue is coercive at $\rho=0$ for arbitrary polygonal multiplicity.
\end{remark}

\subsection{Three aligned peaks in a symmetric domain}

Assume that $\Omega$ contains the origin, is invariant under the reflection
\[
(x_1,x')\longmapsto(-x_1,x'),
\qquad x'\in\mathbb R^3,
\]
and under a group of orthogonal transformations acting on the $x'$ variables
whose fixed-point set is the $x_1$-axis. Assume moreover that the connected
component of $\Omega\cap(\mathbb R e_1)$ containing the origin is
$(-R,R)e_1$. These symmetries allow us to restrict the reduced problem to
configurations with one point at the origin and a symmetric pair at
$\pm re_1$, $r\in(0,R)$.

We prescribe one positive peak at the origin and two negative peaks at
$\pm re_1$. On the symmetry-reduced scale space, the signed interaction
matrix is
\[
M_3(r)=
\begin{pmatrix}
\tau_\Omega(0)&\sqrt2\,G(0,re_1)\\
\sqrt2\,G(0,re_1)&
\tau_\Omega(re_1)-G(re_1,-re_1)
\end{pmatrix}.
\]
Let $\Lambda_3(r)$ denote its largest eigenvalue. Since
$G(0,re_1)>0$, this eigenvalue is simple and its associated eigenvector
can be chosen with both components strictly positive. Moreover
\[
\Lambda_3(r)\ge \tau_\Omega(0)>0.
\]

As $r\to R^-$, the point $re_1$ approaches the boundary and therefore
\[
\tau_\Omega(re_1)\longrightarrow+\infty,
\qquad
\Lambda_3(r)\longrightarrow+\infty.
\]
As $r\to0^+$, using the Green-function normalization of Section~1,
\[
G(0,re_1)=\frac{c_4}{r^2}+O(1),
\qquad
G(re_1,-re_1)=\frac{c_4}{4r^2}+O(1),
\]
while $\tau_\Omega(re_1)=\tau_\Omega(0)+O(r)$. Hence
\[
M_3(r)
=
\frac{c_4}{r^2}
\begin{pmatrix}
0&\sqrt2\\
\sqrt2&-\frac14
\end{pmatrix}
+O(1),
\]
and consequently
\[
\Lambda_3(r)
\sim
\frac{c_4}{r^2}\,
\frac{-\frac14+\sqrt{\frac1{16}+8}}{2}
\longrightarrow+\infty.
\]

Therefore
\[
\mathscr K_3:=
\left\{
r\in(0,R):
\Lambda_3(r)=\min_{(0,R)}\Lambda_3
\right\}
\]
is nonempty and compact. It is the complete set of global minimizers, hence
a stable critical set.

\begin{proposition}\label{prop-three-aligned}
Under the symmetry assumptions above, for $\varepsilon>0$ sufficiently
small problem~\eqref{pb} admits a sign-changing solution with one positive
concentration point at the origin and two negative concentration points at
$\pm r_\varepsilon e_1$, where, along a subsequence,
\[
r_\varepsilon\longrightarrow r_0\in\mathscr K_3.
\]
The two outer bubbles have the same scale, and their common scale relative
to the central one is determined by the positive eigenvector of
$M_3(r_0)$.
\end{proposition}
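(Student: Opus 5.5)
The plan is to apply Theorem~\ref{main} inside the symmetry class described in Theorem~\ref{main-configurations}(d), and then check that the symmetric critical set is a genuine stable critical set for the reduced problem.

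\textbf{Step 1: symmetric reduction.}
Let $\mathcal G$ act on $x'$ and let the reflection act on $x_1$. Consider the subspace
$H_{\rm sym}\subset H_0^1(\Omega)$ of functions that are $\mathcal G$-invariant and even in $x_1$. This subspace is invariant under $i^*_\Omega$ and under the nonlinearity. The ansatz
\[
P\mathcal U_{\delta,0}-P\mathcal U_{\gamma,re_1}-P\mathcal U_{\gamma,-re_1}
\]
lies in $H_{\rm sym}$. Intersecting $\mathcal K^\perp_{\boldsymbol\delta,\boldsymbol\xi}$ with $H_{\rm sym}$, I will rerun Proposition~\ref{fixedpoint} in this class, which is standard by uniqueness of $\phi$.

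\textbf{Step 2: the surviving equations.}
By the symmetric criticality principle, the only coefficients $c_h^j$ in \eqref{r1} that can be nonzero are:
\begin{itemize}
\item the dilation coefficients $c^0$ of the central orbit and of the outer orbit;
\item the $e_1$-translation coefficient of the outer orbit.
\end{itemize}
All other translation coefficients vanish identically. For the center, the fixed-point set of $\mathcal G$ together with the reflection is $\{0\}$. For $\pm re_1$, the $\mathcal G$-invariance kills the transverse directions.

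Using Proposition~\ref{propro} with $\boldsymbol\sigma=(1,-1,-1)$ and $\delta_2=\delta_3=\gamma=d\delta$, the two scale equations become
\[
\tau_\Omega(0)+2dG(0,re_1)=\lambda,
\qquad
G(0,re_1)+d\bigl(\tau_\Omega(re_1)-G(re_1,-re_1)\bigr)=\lambda d.
\]
Setting $x=(1,\sqrt2\,d)$ turns this system into $M_3(r)x=\lambda x$, exactly as was done for \eqref{matrix-1k}.

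\textbf{Step 3: eigenvalue properties.}
The off-diagonal entry $\sqrt2\,G(0,re_1)$ is positive. Hence, by Perron--Frobenius for $2\times2$ symmetric matrices, the largest eigenvalue $\Lambda_3(r)$ is simple and has a positive eigenvector. It also satisfies $\Lambda_3(r)\ge\tau_\Omega(0)>0$.

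\textbf{Step 4: the location equation.}
The radial equation is the $e_1$-component of $F_3$. By the Hellmann--Feynman identity \eqref{var2}, restricted to the one-dimensional parameter $r$, it equals a positive multiple of $\Lambda_3'(r)$. So the degree argument of the proof of Theorem~\ref{main} applies on $(0,R)$ with the scalar field $\Lambda_3'$.

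\textbf{Step 5: coercivity.}
This is the main point to verify.
\begin{itemize}
\item As $r\to R^-$: the hypothesis that the component of $\Omega\cap\mathbb Re_1$ through $0$ is $(-R,R)e_1$ means $Re_1\in\partial\Omega$. Therefore $\tau_\Omega(re_1)\to+\infty$, while $G(re_1,-re_1)$ and $G(0,re_1)$ stay bounded. Since the largest eigenvalue dominates the diagonal entries, $\Lambda_3\to+\infty$.
\item As $r\to0^+$: the $r^{-2}$ expansion gives the leading matrix $\frac{c_4}{r^2}\begin{pmatrix}0&\sqrt2\\ \sqrt2&-1/4\end{pmatrix}$. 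Its top eigenvalue is positive, since the determinant is $-2<0$. Hence $\Lambda_3\to+\infty$ here too.
\end{itemize}
It follows that the set $\mathscr K_3$ of global minimizers is a nonempty compact subset of $(0,R)$. Its degree is $-1\ne0$ for $\Lambda_3'$ on a small neighbourhood, by the strict-minimum criterion recalled after Definition~\ref{yy1}.

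\textbf{Step 6: conclusion.}
Theorem~\ref{main}, applied in its symmetric form, produces $r_\varepsilon\to r_0\in\mathscr K_3$. The ratio $\gamma/\delta$ tends to $d(r_0)$, read off from the positive eigenvector. Sign change is immediate because both signs occur in $\boldsymbol\sigma$.

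I expect the main obstacle to be Step~2. There one has to justify that the symmetric reduced system really collapses to these three equations with the correct factors, in particular the $\sqrt2$ symmetrization and the uniformity of the $o(1)$ terms in $r$. If this causes trouble, I would instead apply Theorem~\ref{main} directly to the full $3\times3$ matrix $M^{\boldsymbol\sigma}$ restricted to the fixed-point set. Its eigenvector $(1,d,d)$ corresponds to $\Lambda_3(r)$, and the symmetric critical set is stable within the invariant class.
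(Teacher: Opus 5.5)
Your proposal is correct and follows the same route as the paper. You reduce in the symmetry class to the $2\times2$ matrix $M_3(r)$, use $G(0,re_1)>0$ to get a simple top eigenvalue with positive eigenvector and $\Lambda_3\ge\tau_\Omega(0)>0$, prove $\Lambda_3\to+\infty$ as $r\to0^+$ and $r\to R^-$, and take the set $\mathscr K_3$ of global minimizers as the stable critical set; your Steps 1, 2 and 4 just make explicit the symmetric reduction and the Hellmann--Feynman identification that the paper leaves implicit. One small slip: around a one-dimensional minimum set you can choose endpoints with $\Lambda_3'<0$ on the left and $\Lambda_3'>0$ on the right, so the local degree is $+1$, not $-1$, though only its non-vanishing matters.
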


\subsection{Four aligned peaks in a convex symmetric domain}

Assume in addition that $\Omega$ is convex. We consider the sign pattern
\[
-r_2e_1\;(-),\qquad -r_1e_1\;(+),\qquad
r_1e_1\;(-),\qquad r_2e_1\;(+),
\qquad 0<r_1<r_2<R.
\]
Equivalently, the solution is odd under the reflection
$x_1\mapsto-x_1$. The two symmetry orbits are the pairs
$\{\pm r_1e_1\}$ and $\{\pm r_2e_1\}$; within each orbit the signs are
opposite but the concentration scales are equal.

In the orthonormal orbit basis, the corresponding signed interaction matrix is
\[
M_4(r_1,r_2)=
\begin{pmatrix}
\tau_\Omega(r_1e_1)+G(r_1e_1,-r_1e_1)
&
G(r_1e_1,r_2e_1)-G(-r_1e_1,r_2e_1)
\\[1mm]
G(r_1e_1,r_2e_1)-G(-r_1e_1,r_2e_1)
&
\tau_\Omega(r_2e_1)+G(r_2e_1,-r_2e_1)
\end{pmatrix}.
\]
By Lemma~A.2 of Bartsch--D'Aprile--Pistoia~\cite{BDP2013},
\[
(x-y)\cdot\nabla_xG(x,y)<0
\qquad (x\ne y)
\]
in a convex domain. Taking $y=r_2e_1$ and comparing the two points
$r_1e_1$ and $-r_1e_1$ on the same half-line issuing from $y$, we obtain
\[
G(r_1e_1,r_2e_1)>G(-r_1e_1,r_2e_1).
\]
Thus the off-diagonal entry of $M_4$ is strictly positive. Its largest
eigenvalue, denoted by $\Lambda_4(r_1,r_2)$, is therefore simple and has
an associated eigenvector with strictly positive components. Since the
diagonal entries are positive,
\[
\Lambda_4(r_1,r_2)>0.
\]

Let
\[
\mathcal D_4:=\{(r_1,r_2):0<r_1<r_2<R\}.
\]
We claim that
\[
\Lambda_4(r_1,r_2)\longrightarrow+\infty
\qquad\text{as }(r_1,r_2)\to\partial\mathcal D_4.
\]
Indeed, if $r_2\to R^-$, then
$\tau_\Omega(r_2e_1)\to+\infty$. If $r_1\to0^+$, then
\[
G(r_1e_1,-r_1e_1)
=
\frac{c_4}{4r_1^2}+O(1)\longrightarrow+\infty,
\]
and if $r_2\to0^+$ then necessarily $r_1\to0^+$ as well. Finally, if
$r_2-r_1\to0^+$ while the points stay away from $0$ and $\partial\Omega$,
then
\[
G(r_1e_1,r_2e_1)
=
\frac{c_4}{(r_2-r_1)^2}+O(1),
\]
whereas $G(-r_1e_1,r_2e_1)$ remains bounded; hence the positive
off-diagonal entry tends to $+\infty$, and so does $\Lambda_4$.

It follows that
\[
\mathscr K_4:=
\operatorname*{argmin}_{\mathcal D_4}\Lambda_4
\]
is a nonempty compact stable critical set.

\begin{proposition}\label{prop-four-aligned}
Under the symmetry and convexity assumptions above, for $\varepsilon>0$
sufficiently small problem~\eqref{pb} admits a sign-changing solution with
four aligned concentration points
\[
-r_{2,\varepsilon}e_1,\quad -r_{1,\varepsilon}e_1,\quad
r_{1,\varepsilon}e_1,\quad r_{2,\varepsilon}e_1
\]
and alternating signs $-,+,-,+$. Along a subsequence,
\[
(r_{1,\varepsilon},r_{2,\varepsilon})
\longrightarrow(r_1^0,r_2^0)\in\mathscr K_4.
\]
The two orbit scales are determined by the positive eigenvector of
$M_4(r_1^0,r_2^0)$.
\end{proposition}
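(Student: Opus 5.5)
The plan is to apply Theorem~\ref{main} in the symmetry class described in the statement, after checking that the symmetry-reduced problem is exactly the spectral problem for $M_4(r_1,r_2)$. First, the reduction. I would work in the subspace of $H_0^1(\Omega)$ of functions odd under $x_1\mapsto -x_1$ and invariant under the transverse $\mathcal G$-action. Both symmetries leave the equation invariant, because $\Omega$ is invariant and the nonlinearity $u^3+\varepsilon u$ is odd. The ansatz is
\[
P\mathcal U_{\delta_2,r_2e_1}-P\mathcal U_{\delta_2,-r_2e_1}-P\mathcal U_{\delta_1,r_1e_1}+P\mathcal U_{\delta_1,-r_1e_1}.
\]
This ansatz lies in the class, and Proposition~\ref{fixedpoint} can be carried out inside it. By the principle of symmetric criticality, the only surviving reduced equations are then the two scale equations and the two radial equations. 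The transverse derivatives vanish because the fixed-point set of $\mathcal G$ is the $x_1$-axis, and the reflection couples each point with its mirror image.

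Writing the full $4\times4$ signed matrix with $\boldsymbol\sigma=(-,+,-,+)$ and restricting to orbit-symmetric scale vectors $(d_2,d_1,d_1,d_2)$ gives the eigenproblem for $M_4$ in the orthonormal orbit basis. On that invariant subspace, the diagonal entries become $\tau+G(re_1,-re_1)$ because the two points of an orbit have opposite signs. The off-diagonal entry becomes $G(r_1e_1,r_2e_1)-G(-r_1e_1,r_2e_1)$, after using the reflection invariance of $G$.

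Second, the spectral hypotheses of Theorem~\ref{main}. These have already been established in the discussion preceding the proposition. The off-diagonal entry is positive by the convexity lemma of \cite{BDP2013}, so $\Lambda_4$ is simple and its eigenvector can be normalized with strictly positive components. Positivity of $\Lambda_4$ follows because the diagonal entries are positive. By the Perron–Frobenius structure, a positive eigenvector of the restricted $2\times2$ problem lifts to an eigenvector of the full signed matrix with positive components. Its eigenvalue may, however, fail to be simple for the full matrix. To handle this, I would invoke Theorem~\ref{main} on the symmetry-reduced system rather than on the full one: its proof, namely the degree argument via \eqref{var2} and Lemma~\ref{isolated}, only uses simplicity of the branch within the space of variables actually present.

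Third, the location problem. Coercivity $\Lambda_4\to+\infty$ on $\partial\mathcal D_4$ was shown above, so $\mathscr K_4$ is a nonempty compact set of global minimizers. It is therefore a strict local minimum set and stable in the sense of Definition~\ref{yy1}. Via the Hellmann–Feynman identity, the analogue of \eqref{var2}, the radial equations coincide with $\nabla\Lambda_4=0$ up to a positive diagonal factor. Hence the degree on a neighbourhood of $\mathscr K_4$ is nonzero. Lemma~\ref{isolated} then yields $(r_{1,\varepsilon},r_{2,\varepsilon})\to(r_1^0,r_2^0)\in\mathscr K_4$, with orbit scale ratios converging to the components of the positive eigenvector.

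The main obstacle I expect is the rigorous justification that the $o(1)$ remainders of Proposition~\ref{propro} respect the symmetry. Concretely, the reduced map restricted to the symmetric configuration space must be a small perturbation of the reduced $4$-variable map $(\lambda,d,r_1,r_2)$, uniformly on compacts of $\mathcal D_4$. My first approach would be to check this directly from equivariance of the projections $\Pi_{\boldsymbol\delta,\boldsymbol\xi}$ under the symmetry group.
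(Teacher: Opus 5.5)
Your proposal is correct and follows essentially the paper's route: you reduce in the class of functions odd in $x_1$ and $\mathcal G$-invariant to the $2\times2$ orbit matrix $M_4$, and you obtain the positive off-diagonal entry from the convexity lemma of \cite{BDP2013}. Coercivity on $\partial\mathcal D_4$ then makes $\mathscr K_4$ a stable minimum set, and the degree argument of Theorem~\ref{main} concludes.

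Your observation that simplicity is needed only for the symmetry-reduced $2\times2$ problem, not for the full $4\times4$ signed matrix, makes explicit a point the paper leaves implicit. The obstacle you flag about the remainders is harmless: uniqueness in Proposition~\ref{fixedpoint} plus equivariance forces $\phi$ to be symmetric, so the reduced coefficients satisfy the symmetry relations exactly.
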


\subsection{Five aligned peaks in the ball}
We look for a solution of the form
\[
u_\e=P\mathcal U_{\delta,0}-P\mathcal U_{\gamma,\xi_1}+P\mathcal U_{\mu,\xi_2}
-P\mathcal U_{\gamma,-\xi_1}+P\mathcal U_{\mu,-\xi_2}+\phi_\e,
\]
where
\[
\delta=e^{-\frac{8\pi^2}{\e}\lambda}d_1,
\qquad \gamma=e^{-\frac{8\pi^2}{\e}\lambda}d_2,
\qquad \mu=e^{-\frac{8\pi^2}{\e}\lambda}d_3,
\]
and
\[
\xi_1=(s,0,0,0),\qquad \xi_2=(t,0,0,0),\qquad 0<s<t<1.
\]
We set
\[
g(r):=\frac1{r^2}-1,
\qquad
\tau(r):=\frac1{(1-r^2)^2},
\qquad
h(r):=\frac1{4r^2}-\frac1{(1+r^2)^2},
\]
and
\[
a(r):=\tau(r)-h(r),
\]
while
\[
F(s,t):=f_-(s,t)+f_+(s,t),
\]
with
\[
f_-(s,t)=\frac1{(t-s)^2}-\frac1{(1-st)^2},
\qquad
f_+(s,t)=\frac1{(s+t)^2}-\frac1{(1+st)^2}.
\]
The reduced equations associated with the three symmetry orbits are
\begin{equation}\label{five-raw-system}
\left\{
\begin{aligned}
&d_1+2g(s)d_2-2g(t)d_3=\lambda d_1,\\
&g(s)d_1+a(s)d_2+F(s,t)d_3=\lambda d_2,\\
&-g(t)d_1+F(s,t)d_2+a(t)d_3=\lambda d_3.
\end{aligned}
\right.
\end{equation}
The first orbit has cardinality one, whereas the other two have cardinality two. Therefore the correct symmetric form of \eqref{five-raw-system} is obtained by introducing
\[
x=(x_1,x_2,x_3)^T:=(d_1,\sqrt2\,d_2,\sqrt2\,d_3)^T.
\]
Then \eqref{five-raw-system} is equivalent to
$$
S(s,t)x=\lambda x,
$$
where
$$
S(s,t):=
\begin{pmatrix}
1&\sqrt2\,g(s)&-\sqrt2\,g(t)\\
\sqrt2\,g(s)&a(s)&F(s,t)\\
-\sqrt2\,g(t)&F(s,t)&a(t)
\end{pmatrix},
\qquad (s,t)\in\mathcal D:=\{0<s<t<1\}.
$$
Thus it is enough to find a stable critical set of the largest eigenvalue
\[
\Lambda(s,t):=\lambda_{\max}(S(s,t))
\]
for which the associated eigenvector has strictly positive components.

\medskip
\noindent\textit{Step 1: coercivity and existence of a minimum.}
We claim that
\[
\Lambda(s,t)\longrightarrow+\infty
\qquad\text{as }(s,t)\to\partial\mathcal D.
\]
Indeed, if $s\to0$, then by the Rayleigh principle
\[
\Lambda(s,t)\geq
\lambda_{\max}
\begin{pmatrix}
1&\sqrt2\,g(s)\\
\sqrt2\,g(s)&a(s)
\end{pmatrix}
\longrightarrow+\infty,
\]
because $g(s)=s^{-2}-1$ and $a(s)=-\frac1{4s^2}+O(1)$. If $t\to1$, then
\[
\Lambda(s,t)\geq a(t)\longrightarrow+\infty.
\]
Finally, if $s$ stays away from $0$, $t$ stays away from $1$, and $t-s\to0$, then $a(s)$ and $a(t)$ remain bounded while $F(s,t)\to+\infty$; testing the Rayleigh quotient with $(0,1,1)/\sqrt2$ gives
\[
\Lambda(s,t)\geq \frac{a(s)+a(t)}2+F(s,t)\longrightarrow+\infty.
\]
Hence the set
\[
K:=\operatorname*{argmin}_{(s,t)\in\mathcal D}\Lambda(s,t)
\]
is nonempty and compactly contained in $\mathcal D$. Moreover $\Lambda\geq1$, by testing with $e_1$, so its minimum is positive.

\medskip
\noindent\textit{Step 2: simplicity of the largest eigenvalue.}
Consider the principal block
\[
B_s:=
\begin{pmatrix}
1&\sqrt2\,g(s)\\
\sqrt2\,g(s)&a(s)
\end{pmatrix}
\]
and denote its largest eigenvalue by $\mu_s$. Let $(\beta_1,\beta_2)$ be the corresponding unit eigenvector, chosen with positive components, and set
\[
\gamma_s:=\frac{\beta_2}{\beta_1}
=\frac{\mu_s-1}{\sqrt2\,g(s)}.
\]
We first note that
\begin{equation}\label{gamma-lower}
\gamma_s>\frac1{\sqrt2}
\qquad\text{for every }s\in(0,1).
\end{equation}
Indeed, writing
\[
Y(s):=\frac{a(s)-1}{2\sqrt2\,g(s)},
\qquad
\gamma_s=Y(s)+\sqrt{Y(s)^2+1},
\]
the inequality \eqref{gamma-lower} is equivalent to
\[
Y(s)>-\frac1{2\sqrt2},
\]
that is, to $a(s)+g(s)>1$. But
\[
a(s)+g(s)-1
=
\frac1{(1-s^2)^2}+\frac1{(1+s^2)^2}+\frac3{4s^2}-2>0.
\]
We shall also use 
\begin{equation}\label{F-lower}
F(s,t)>2g(t),\qquad 0<s<t<1.
\end{equation}
Indeed,  
\[
\frac1{(t-s)^2}+\frac1{(t+s)^2}
=\frac2{t^2}\sum_{m=0}^\infty(2m+1)\left(\frac{s}{t}\right)^{2m},
\]
whereas
\[
\frac1{(1-st)^2}+\frac1{(1+st)^2}
=2\sum_{m=0}^\infty(2m+1)(st)^{2m},
\]
and subtraction gives
\[
F(s,t)=2g(t)+2\sum_{m=1}^\infty(2m+1)s^{2m}
\bigl(t^{-2m-2}-t^{2m}\bigr)>2g(t).
\]
Combining \eqref{gamma-lower} and \eqref{F-lower},
\begin{equation}\label{cross-positive}
F(s,t)\gamma_s-\sqrt2\,g(t)>0.
\end{equation}
Embed $(\beta_1,\beta_2)$ in $\mathbb R^3$ and take
$w_\eta=(\beta_1,\beta_2,\eta)$. Then
\[
\frac{w_\eta^TS(s,t)w_\eta}{|w_\eta|^2}
=
\mu_s+
\frac{2\eta\beta_1\bigl(F(s,t)\gamma_s-\sqrt2\,g(t)\bigr)
+(a(t)-\mu_s)\eta^2}{1+\eta^2}.
\]
By \eqref{cross-positive}, this is strictly larger than $\mu_s$ for all sufficiently small $\eta>0$. Therefore
\begin{equation}\label{strict-interlacing-five}
\Lambda(s,t)>\mu_s.
\end{equation}
Cauchy's interlacing theorem now implies that the largest eigenvalue of $S(s,t)$ is simple throughout $\mathcal D$.

\medskip
\noindent\textit{Step 3: the eigenvector at a minimum has no vanishing component.}
Fix $(s_0,t_0)\in K$ and let $x=(x_1,x_2,x_3)$ be a unit eigenvector associated with $\Lambda(s_0,t_0)$. From \eqref{strict-interlacing-five}, $x_3\neq0$.
Suppose that $x_2=0$. Then $(x_1,x_3)$ is an eigenvector of
\[
B_t:=
\begin{pmatrix}
1&-\sqrt2\,g(t_0)\\
-\sqrt2\,g(t_0)&a(t_0)
\end{pmatrix}
\]
corresponding to its largest eigenvalue $\mu_t$. Up to sign we may write
\[
(x_1,x_3)=c(1,-\gamma_t),
\qquad
\gamma_t:=\frac{\mu_t-1}{\sqrt2\,g(t_0)}>\frac1{\sqrt2}.
\]
The second row of the full eigenvalue equation gives
\[
F(s_0,t_0)\gamma_t=\sqrt2\,g(s_0).
\]
Hence
\[
\mu_t=1+\sqrt2\,g(t_0)\gamma_t
=1+\frac{2g(s_0)g(t_0)}{F(s_0,t_0)}.
\]
On the other hand
\[
\mu_{s_0}=1+\sqrt2\,g(s_0)\gamma_{s_0}>1+g(s_0),
\]
whereas \eqref{F-lower} gives
\[
\mu_t<1+g(s_0).
\]
Thus $\mu_t<\mu_{s_0}\leq\Lambda(s_0,t_0)$, contradicting
$\mu_t=\Lambda(s_0,t_0)$. Therefore
\begin{equation}\label{nonzero-five}
x_2x_3\neq0.
\end{equation}

\medskip
\noindent\textit{Step 4: positivity of the eigenvector at the minimum.}
Since $\Lambda$ is simple, it is smooth near $(s_0,t_0)$ and the Hellmann--Feynman identities give
\[
x^TS_s(s_0,t_0)x=0,
\qquad
x^TS_t(s_0,t_0)x=0.
\]
Using \eqref{nonzero-five}, we may divide by $x_2$ and $x_3$ and obtain
\begin{equation}\label{HF-five}
\left\{
\begin{aligned}
&2\sqrt2\,g'(s_0)x_1+a'(s_0)x_2+2F_s(s_0,t_0)x_3=0,\\
&-2\sqrt2\,g'(t_0)x_1+2F_t(s_0,t_0)x_2+a'(t_0)x_3=0.
\end{aligned}
\right.
\end{equation}
Solving \eqref{HF-five} for $x_1,x_2$ in terms of $x_3$ yields
\[
x_1=
\frac{\sqrt2\bigl(a'(s_0)a'(t_0)-4F_sF_t\bigr)}
{4\Delta}\,x_3,
\qquad
x_2=
\frac{-g'(s_0)a'(t_0)-2g'(t_0)F_s}
{\Delta}\,x_3,
\]
where
\[
\Delta:=g'(t_0)a'(s_0)+2g'(s_0)F_t(s_0,t_0).
\]
Here $g'<0$, $a'>0$, $F_s>0$ and $F_t<0$. It remains only to show that $\Delta>0$. Since $g'(r)=-2r^{-3}$, this is equivalent to
\[
2t_0^3\bigl(-F_t(s_0,t_0)\bigr)>s_0^3a'(s_0).
\]
Now
\[
-F_t(s,t)
=
\frac2{(t-s)^3}+\frac{2s}{(1-st)^3}
+\frac2{(s+t)^3}-\frac{2s}{(1+st)^3}
>\frac2{(t-s)^3},
\]
so
\[
2t_0^3(-F_t)>4\left(\frac{t_0}{t_0-s_0}\right)^3
>\frac4{(1-s_0)^3}.
\]
Furthermore
\[
s^3a'(s)=\frac{4s^4}{(1-s^2)^3}+\frac12-
\frac{4s^4}{(1+s^2)^3}<\frac4{(1-s)^3}.
\]
Consequently $\Delta>0$. After normalizing $x_3>0$, the preceding formulas give
\[
x_1>0,\qquad x_2>0,\qquad x_3>0.
\]
Since $d_1=x_1$, $d_2=x_2/\sqrt2$ and $d_3=x_3/\sqrt2$, the three concentration parameters are positive as well.

Finally, $K$ is a strict local minimum set for $\Lambda$: it is compactly contained in $\mathcal D$, $\Lambda$ is constant on $K$, and $\Lambda(z)>\min_{\mathcal D}\Lambda$ for every $z\notin K$. Hence, by Definition~\ref{yy1}, $K$ is a stable critical set. By simplicity, the normalized eigenvector depends continuously on $(s,t)$ and its three components remain positive in a neighbourhood of $K$. Therefore Theorem~\ref{main} applies to the five-peak configuration.

\begin{proposition}\label{prop-five-aligned}
For $\varepsilon>0$ sufficiently small, problem~\eqref{pb} in the unit ball
admits a sign-changing solution with five aligned concentration points
\[
0,\qquad \pm s_\varepsilon e_1,\qquad \pm t_\varepsilon e_1,
\qquad 0<s_\varepsilon<t_\varepsilon<1,
\]
and sign pattern
\[
+,-,+,-,+.
\]
Along a subsequence,
\[
(s_\varepsilon,t_\varepsilon)\longrightarrow(s_0,t_0)\in K,
\]
where $K=\operatorname*{argmin}_{\mathcal D}\Lambda$. The three orbit
scales are asymptotic to the strictly positive components of the
eigenvector of $S(s_0,t_0)$ associated with $\Lambda(s_0,t_0)$.
\end{proposition}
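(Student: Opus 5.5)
The proof will apply Theorem~\ref{main} within the symmetry class described in the statement: functions even under $x_1\mapsto-x_1$ and invariant under the full orthogonal group acting on $(x_2,x_3,x_4)$. The first step is to observe that the Lyapunov--Schmidt reduction of Section~\ref{sec1} can be carried out inside the closed subspace of $H^1_0(B_1)$ of functions with these symmetries. The ansatz $P\mathcal U_{\delta,0}-P\mathcal U_{\gamma,\pm\xi_1}+P\mathcal U_{\mu,\pm\xi_2}$ belongs to this subspace. The symmetric kernel is then spanned only by the dilation elements $P\psi^0$ (one per orbit, paired across $\pm$) and by the $x_1$-translation elements $P\psi^1_{\gamma,\xi_1}-P\psi^1_{\gamma,-\xi_1}$ and the analogous combination for $\xi_2$. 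The transverse translations and the translations of the central bubble are excluded by the symmetry. By the principle of symmetric criticality, a zero of the reduced system in this class gives a genuine solution of \eqref{pb}. Proposition~\ref{fixedpoint} and Proposition~\ref{propro} carry over verbatim.

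The second step is to write out the reduced equations. Specializing Proposition~\ref{propro} to the three orbits gives exactly the scale system \eqref{five-raw-system}, with $\lambda$, $d_1,d_2,d_3$ in place of the general scales. The signs of the Green interactions follow from the pattern $+,-,+,-,+$. The diagonal entries $a(r)=\tau(r)-h(r)$ collect the Robin term and the interaction of same-sign partners at $\pm r e_1$. Here $h(r)=G(re_1,-re_1)$ after suppressing $c_4$, and this term carries a minus sign because the two points have equal signs. The off-diagonal entry $F$ collects the interactions of $\xi_2$ with both $\xi_1$ and $-\xi_1$. Rescaling by $x=(d_1,\sqrt2 d_2,\sqrt2 d_3)$ symmetrizes the system into $S(s,t)x=\lambda x$. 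The two location equations (in $s$ and $t$) are, by the same computation that gives \eqref{var2}, the Hellmann--Feynman derivatives $x^TS_sx$ and $x^TS_tx$. They differ from $\partial_s\Lambda$ and $\partial_t\Lambda$ only by positive factors whenever the eigenvector is positive. Hence the problem reduces, exactly as in the proof of Theorem~\ref{main}, to finding a stable critical set of $\Lambda=\lambda_{\max}(S)$ on $\mathcal D$ at which the eigenvector is positive.

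The third step collects the facts established in Steps 1--4 above:
\begin{enumerate}
\item[(i)] coercivity of $\Lambda$ at $\partial\mathcal D$, so that $K=\operatorname{argmin}\Lambda$ is a nonempty compact set, and $\Lambda\ge1>0$;
\item[(ii)] simplicity of $\lambda_{\max}$ on all of $\mathcal D$, via the strict interlacing \eqref{strict-interlacing-five};
\item[(iii)] nonvanishing components $x_2x_3\ne0$ at points of $K$;
\item[(iv)] positivity of all components at $K$, obtained by solving \eqref{HF-five}.
\end{enumerate}
By continuity of the simple eigenvector, positivity persists in a neighbourhood $\Theta$ of $K$. Since $K$ is the full set of global minimizers, $\deg(\nabla\Lambda,\Theta,0)=1\neq0$. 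Theorem~\ref{main}, applied in the two-dimensional parameter space $(s,t)$, then produces the family, with $(s_\e,t_\e)\to K$ and orbit scales converging to the eigenvector components. Sign change is immediate because both signs occur.

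The main obstacle is step (iv), namely excluding an eigenvector with mixed signs at the minimizer. Indeed, the entry $-\sqrt2 g(t)$ is negative, so Perron--Frobenius does not apply directly. The argument above resolves this only at critical points, using the two Hellmann--Feynman identities together with the sign inequalities $g'<0$, $a'>0$, $F_s>0$, $F_t<0$ and $\Delta>0$. I would double-check the claimed sign $F_s>0$, which relies on the $1/(t-s)^2$ term dominating for $s<t$. I would also check that the solved formulas for $x_1$ and $x_2$ have positive numerators, namely $a'(s)a'(t)>4F_sF_t$, which holds since $F_sF_t<0$, and $-g'(s)a'(t)-2g'(t)F_s>0$.
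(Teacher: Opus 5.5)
Your proposal is correct and follows the paper's argument essentially step for step: reduction in the symmetry class to the $3\times3$ matrix $S(s,t)$, coercivity of $\Lambda$ on $\mathcal D$, simplicity via the strict interlacing $\Lambda>\mu_s$, the nonvanishing $x_2x_3\neq0$, positivity at minimizers from the two Hellmann--Feynman identities together with $\Delta>0$, and finally Theorem~\ref{main}. The sign $F_s>0$ you wanted to double-check does hold, because the expansion behind \eqref{F-lower} reads $F(s,t)=2\sum_{m\ge0}(2m+1)s^{2m}\bigl(t^{-2m-2}-t^{2m}\bigr)$, which is increasing in $s$. At the degree step you only need $\deg(\nabla\Lambda,\Theta,0)\neq0$, which is the stability of the minimum set that the paper invokes; the value $1$ is automatic only when $K$ is a single isolated minimizer.
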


\appendix
\section{Auxiliary results}\label{app:auxiliary}
Here we collect some useful results.\\ The first one is useful to find a zero of a $C^1-$ map.

\begin{lemma}\label{isolated}

Let $\mathcal A\times \mathcal B$ be an open subset of $\mathbb R^m\times \mathbb R^h$ and define the $C^1-$ maps $$\Phi_1: \mathcal A\times \mathcal B \to \mathbb R^h,\quad \quad \Phi_2: \mathcal A\times \mathcal B\to \mathbb R^m.$$ Let, also, for every $(x, y)\in\mathcal A\times\mathcal B$ $$\Phi(x, y)=(\Phi_1(x, y), \Phi_2(x, y)).$$Assume for any $x\in\mathcal A$ there exists a unique $y:=y(x)\in\mathcal B$ such that $\Phi_1(x, y(x))=0$ and $D_y \Phi_1(x, y(x))$ is invertible. Let $\varphi(x):=\Phi_2(x, y(x)).$\\ 
If $\mathscr K\subset\mathcal A$ is such that 
$$\varphi(x_0)=0\ \hbox{for any}\ x_0\in\mathscr K\ \hbox{and}\ \mathtt{deg}\left(\varphi, \Theta,0\right)\neq 0$$
for some open neighbourhood $\Theta$ of $\mathscr K$,  then the
set
$$\mathscr Z:=\{(x_0, y_0)\in \mathcal A\times\mathcal B\ :\ x_0\in\mathscr K,\ y_0=y(x_0)\}$$ is such that 
$$\Phi(x_0, y_0)=0\ \hbox{for any}\ (x_0, y_0)\in \mathscr Z\ \hbox{and}\ \mathtt{deg}\left(\Phi, \Xi,0\right)\neq 0,$$
for some open neighbourhood $\Xi$ of $\mathscr Z.$
\end{lemma}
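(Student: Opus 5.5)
The plan is to reduce to the product formula for Brouwer degree through a homotopy that decouples the two components of $\Phi$. First I would fix the neighbourhood. Since $D_y\Phi_1(x,y(x))$ is invertible and $y(x)$ is the unique zero of $\Phi_1(x,\cdot)$, the implicit function theorem shows that $x\mapsto y(x)$ is $C^1$ near $\mathscr K$. So $\varphi$ is $C^1$, and $\mathscr Z$ is compact, being the continuous image of the compact set $\mathscr K$. I may shrink $\Theta$ so that $\overline\Theta\subset\mathcal A$ is compact and $\varphi\neq0$ on $\partial\Theta$. By excision the degree of $\varphi$ is unchanged, because every zero of $\varphi$ in the original $\Theta$ must already lie in the smaller one.

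Next I would take $\Xi:=\{(x,y): x\in\Theta,\ |y-y(x)|<r\}$ with $r>0$ small. I would then consider the homotopy
\[
H_t(x,y):=\bigl(\Phi_1(x,y),\ \Phi_2(x,\,t\,y+(1-t)\,y(x))\bigr),\qquad t\in[0,1].
\]
We have $H_1=\Phi$, while $H_0(x,y)=(\Phi_1(x,y),\varphi(x))$. To check admissibility on $\partial\Xi$, I would argue as follows. A zero of $H_t$ satisfies $\Phi_1(x,y)=0$, so uniqueness forces $y=y(x)$. The second component then equals $\varphi(x)$, so $x$ is a zero of $\varphi$ in $\overline\Theta$, hence $x\in\Theta$. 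Such a point $(x,y(x))$ lies in the interior of $\Xi$, so $H_t$ has no zeros on $\partial\Xi$ for any $t$. By homotopy invariance, $\deg(\Phi,\Xi,0)=\deg(H_0,\Xi,0)$.

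Finally I would compute $\deg(H_0,\Xi,0)$. Its zero set is $\{(x,y(x)):\varphi(x)=0\}$. I would approximate $\varphi$ by a regular-valued perturbation, or argue via Sard, so that its zeros are finitely many nondegenerate points $x_j$. At each zero $(x_j,y(x_j))$, the Jacobian of $H_0$ is block lower-triangular in the $(y,x)$ ordering. Up to a fixed sign convention coming from ordering the components, its determinant is $\det D_y\Phi_1(x_j,y(x_j))\cdot\det D\varphi(x_j)$.

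The main obstacle is that $\operatorname{sign}\det D_y\Phi_1(x,y(x))$ must be constant on $\Theta$. Otherwise the local indices could cancel and the degree of $\Phi$ would vanish. I would secure this by taking $\Theta$ to be a union of components on each of which this sign is constant, which is possible because the sign is continuous and nonzero along the graph. In the worst case one may need to replace $\Theta$ by a single component carrying nonzero degree. Granting this, summing the local indices gives
\[
\deg(\Phi,\Xi,0)=\pm\,\operatorname{sign}\det D_y\Phi_1\cdot\deg(\varphi,\Theta,0)\neq0,
\]
which is the conclusion of the lemma.
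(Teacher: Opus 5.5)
The paper states this lemma without proof, so your argument has to stand on its own, and most of it does. The map $y(\cdot)$ is in fact $C^1$ on all of $\mathcal A$ (implicit function theorem at each point plus uniqueness). That is what you need to build $\Xi$ over the whole of $\overline\Theta$, not just near $\mathscr K$. No shrinking of $\Theta$ is required, since $\deg(\varphi,\Theta,0)$ being defined already means $\overline\Theta\subset\mathcal A$ is compact and $\varphi\neq0$ on $\partial\Theta$. The homotopy $H_t$ is admissible for exactly the reason you give, and at the zeros of $H_0$ one has $\det DH_0=(-1)^{mh}\det D_y\Phi_1\,\det D\varphi$. What this actually proves is
\[
\deg(\Phi,\Xi,0)=(-1)^{mh}\sum_{C}\epsilon_C\,\deg(\varphi,C,0),
\]
where $C$ runs over the finitely many components of $\Theta$ containing zeros of $\varphi$. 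Here $\epsilon_C$ is the sign of $\det D_y\Phi_1(x,y(x))$, which is automatically constant on each $C$.

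The last step is a genuine gap. Choosing $\Theta$ as a union of components on which the sign is constant achieves nothing, since that holds for every $\Theta$; what is needed is that all the $\epsilon_C$ coincide. Replacing $\Theta$ by a single component of nonzero degree gives an open set that is no longer a neighbourhood of all of $\mathscr Z$, so the stated conclusion is not reached. In fact no argument can reach it, because the lemma as stated is false when $\mathcal A$ is disconnected. Take $m=h=1$, $\mathcal A=(-3,-1)\cup(1,3)$, $\mathcal B=\mathbb R$ and
\[
\Phi(x,y)=\bigl(\operatorname{sgn}(x)\,y,\ x-2\operatorname{sgn}(x)\bigr).
\]
Then $y(x)\equiv0$, $\varphi(x)=x-2\operatorname{sgn}(x)$, $\mathscr K=\{\pm2\}$, and $\deg(\varphi,\Theta,0)=2$ for $\Theta=(-\tfrac{5}{2},-\tfrac{3}{2})\cup(\tfrac{3}{2},\tfrac{5}{2})$. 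But $\Phi$ vanishes only at $(\pm2,0)$, and $\det D\Phi(x,y)=-\operatorname{sgn}(x)$, i.e.\ $-1$ at $(2,0)$ and $+1$ at $(-2,0)$. Hence $\deg(\Phi,\Xi,0)=0$ for every admissible neighbourhood $\Xi$ of $\mathscr Z$.

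So what is missing is a hypothesis rather than an idea. If $\operatorname{sign}\det D_y\Phi_1(x,y(x))$ is the same on all of $\Theta$ (automatic when $\mathcal A$ is connected), your proof is complete and gives $\deg(\Phi,\Xi,0)=\pm\deg(\varphi,\Theta,0)\neq0$. Without it, your single-component version is the correct statement, and it still suffices for the existence argument in the proof of Theorem~\ref{main}. In that application $\det D_y(F_1,F_2)$ equals, up to the fixed sign $(-1)^k$, the product of $\mu-\bar\Lambda$ over the other eigenvalues $\mu$ of $M^{\boldsymbol\sigma}(\boldsymbol\xi)$. The hypothesis therefore holds whenever $\bar\Lambda$ is the largest eigenvalue throughout $\Theta$, as for $\Lambda_2$ in Theorem~\ref{main-two-peak}.
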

Here we have some Lemmas useful to prove Theorem~\ref{main-two-peak}. The first one was proved in \cite{H}- Lemma 2.

\begin{lemma}\label{lemmastimaGT}
Let $u$ be a solution of
\[
\begin{cases}
-\Delta u=f
& \text{in }\Omega,\\
u=0
& \text{on }\partial\Omega.
\end{cases}
\]
Let $\omega$ be a neighbourhood of $\partial\Omega$.
Then
\[
\|\nabla u\|_{C^{0,\alpha}(\omega^*)}
\leq
C
\Bigl(
\|f\|_{L^1(\Omega)}
+
\|f\|_{L^\infty(\omega)}
\Bigr),
\]
where $\omega^*\Subset\omega$.
\end{lemma}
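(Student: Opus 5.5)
This is a standard $L^1$–$L^\infty$ interior/boundary estimate. The approach is to split $u$ into a Green-potential part generated by the part of $f$ living away from $\partial\Omega$, and a part generated by $f$ near the boundary. Fix open sets $\omega^*\Subset\omega'\Subset\omega$, where $\omega$ is a neighbourhood of $\partial\Omega$ intersected with $\Omega$. Write $f=f\chi_{\Omega\setminus\omega'}+f\chi_{\omega'}=:f_1+f_2$ and let $u_i$ solve $-\Delta u_i=f_i$ in $\Omega$, $u_i=0$ on $\partial\Omega$. By linearity $u=u_1+u_2$.

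\emph{The term $u_2$.} Here $f_2\in L^\infty(\Omega)$ with $\|f_2\|_{L^\infty}\le\|f\|_{L^\infty(\omega)}$. Global $W^{2,p}$ estimates for the Dirichlet problem in the smooth domain $\Omega$ give $\|u_2\|_{W^{2,p}(\Omega)}\le C\|f_2\|_{L^p}\le C\|f\|_{L^\infty(\omega)}$ for any $p<\infty$. Choosing $p>4$, Morrey's embedding yields $\|\nabla u_2\|_{C^{0,\alpha}(\overline\Omega)}\le C\|f\|_{L^\infty(\omega)}$.

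\emph{The term $u_1$.} We have the representation $u_1(x)=\int_{\Omega\setminus\omega'}G(x,y)f(y)\,dy$. For $x\in\omega''$, where $\omega^*\Subset\omega''\Subset\omega'$, the distance from $x$ to the support of $f_1$ is bounded below by $d_0>0$. Since $G$ is smooth up to the boundary in $x$ on $\{(x,y):|x-y|\ge d_0\}$, uniformly in $y$ (because $H(\cdot,y)$ is harmonic with smooth boundary data), we get $\sup_{x\in\omega'',\,y\in\Omega\setminus\omega'}\|G(\cdot,y)\|_{C^{1,\alpha}(\overline{\omega''})}\le C$. Differentiating under the integral then gives $\|\nabla u_1\|_{C^{0,\alpha}(\omega^*)}\le C\|f\|_{L^1(\Omega)}$.

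An alternative, avoiding the uniform smoothness of $G$, is to use that $u_1$ is harmonic in $\omega'$ and vanishes on $\partial\Omega$. Bound $\|u_1\|_{L^1(\Omega)}\le C\|f\|_{L^1}$ by duality with the bounded function $\int G(x,\cdot)\,dx$, and then apply boundary Schauder estimates for harmonic functions with zero Dirichlet data, which control $C^{1,\alpha}$ on $\omega^*$ by the $L^1$ norm on $\omega'$.

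Adding the two estimates gives the claim. The main technical point is the uniform $C^{1,\alpha}$ control of $G(\cdot,y)$ up to $\partial\Omega$ when $x$ and $y$ are separated. I would handle this via the local boundary estimate for harmonic functions vanishing on a boundary portion, combined with the uniform bound $G(x,y)\le C d_0^{-2}$ in the region of separation.
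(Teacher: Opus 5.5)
Your proof is correct, but it takes a different route from the one the paper relies on. The paper gives no argument of its own for this lemma; it quotes Lemma~2 of Han~\cite{H}. There the estimate is stated together with a global bound $\|u\|_{W^{1,q}(\Omega)}\le C\|f\|_{L^1(\Omega)}$, $q<\frac{N}{N-1}$, and it is obtained in the standard way, without splitting $f$. One first controls the whole solution in $W^{1,q}(\Omega)$ by $\|f\|_{L^1(\Omega)}$ through the Stampacchia/Brezis--Strauss duality estimate. Then, since $f\in L^\infty(\omega)$, one applies localized boundary $L^p$ estimates on the collar $\omega$ with cut-offs, bootstraps from $W^{1,q}$ to $W^{2,p}$ with $p>N$ on a thinner collar, and concludes by Morrey's embedding.

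Your decomposition $f=f\chi_{\Omega\setminus\omega'}+f\chi_{\omega'}$ replaces the $L^1$-data theory and the cut-off bootstrap by two more elementary facts. The first is the global $W^{2,p}$ estimate for the collar part. The second is a uniform $C^{1,\alpha}$ bound on $G(\cdot,y)$ over $\omega''$ for $y\in\Omega\setminus\omega'$. You should state explicitly that this uniformity comes from $y$ staying a fixed distance from $\partial\Omega$: then the boundary datum $c_4|x-y|^{-2}$ of $H(\cdot,y)$ is bounded in $C^{2,\alpha}(\partial\Omega)$, and your parenthetical ``uniformly in $y$'' would not follow from that justification if $y$ were allowed to approach the boundary. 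Your harmonic alternative works equally well, using $0\le G(x,y)\le c_4|x-y|^{-2}$ plus boundary estimates for harmonic functions vanishing on $\partial\Omega$.

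What each route buys: yours makes transparent why the interior part of $f$ enters only through its $L^1$ norm, but it leans on a kernel representation for the Dirichlet Laplacian. Han's argument uses no kernel, transfers verbatim to general elliptic operators, and yields the global $W^{1,q}$ bound as a by-product.

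Finally, the identity $u=u_1+u_2$ requires reading ``solution'' as the unique weak (or very weak) solution. This is automatic in the paper's application, where $f$ lies in every $L^p(\Omega)$.
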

The second result follows by a regularity result for critical problems in Brezis and Kato \cite{BK} 
\begin{lemma}\label{BK}
Let $w\in H^1_0(\Omega)$ solve
\[
-\Delta w=a(x)w
\]
with $a\in L^{N/2}(\Omega)$ for $N\geq 3$.
Then $w\in L^q(\Omega)$ for any $q<\infty$.
\end{lemma}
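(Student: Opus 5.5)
The plan is a Moser iteration in the spirit of Brezis--Kato. The natural first attempt is the Sobolev argument used above for the nodal domains: test the equation against a truncated power of $|w|$ and absorb the bad part of $a$ using the smallness of its tail. I will build the proof on this.

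Fix $\beta\geq 1$ and $L>0$. Set $w_L:=\min\{|w|,L\}$ and test $-\Delta w=a w$ with
\[
\varphi:=w\,w_L^{2(\beta-1)}\in H^1_0(\Omega).
\]
This test function is admissible because it is a bounded Lipschitz function composed with $w$. Put $v:=w\,w_L^{\beta-1}$. A standard computation of $\nabla v$ gives
\[
\int_\Omega|\nabla v|^2\leq C_\beta\int_\Omega\nabla w\cdot\nabla\varphi=C_\beta\int_\Omega a\,v^2 ,
\]
where $C_\beta$ depends only on $\beta$.

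Next, split $a$. Since $a\in L^{N/2}(\Omega)$, for every $\eta>0$ there is $K$ with
\[
\|a\,\chi_{\{|a|>K\}}\|_{L^{N/2}}<\eta .
\]
Hölder and Sobolev then give
\[
\int_\Omega a v^2\leq K\int_\Omega v^2+\eta\|v\|_{L^{2^*}}^2\leq K\int_\Omega v^2+\eta S^{-2}\|\nabla v\|_2^2 .
\]
Choose $\eta$ with $C_\beta\eta S^{-2}\leq\tfrac12$, so the last term can be absorbed. This yields
\[
\|v\|_{L^{2^*}}^2\leq C\,\|\nabla v\|_2^2\leq C'K\int_\Omega |w|^{2\beta},
\]
where the bound on $\int v^2$ uses $|v|\leq|w|^\beta$. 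The right-hand side does not depend on $L$. Letting $L\to\infty$ by monotone convergence, we get
\[
\|w\|_{L^{\beta 2^*}}^{2\beta}\leq C(\beta,a)\,\|w\|_{L^{2\beta}}^{2\beta}
\]
whenever the right-hand side is finite.

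Finally, iterate. Start from $w\in L^{2^*}$, which holds since $w\in H^1_0$. Take $\beta_1=2^*/2$, then $\beta_{j+1}=\beta_j\,2^*/2$. The exponents grow geometrically to $\infty$. Each step is finite because the previous one controls $\|w\|_{L^{2\beta_j}}$. Interpolation then covers every intermediate $q<\infty$.

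The main obstacle is the absorption step. The constant $C_\beta$ grows with $\beta$, so $\eta$ and hence $K$ must be rechosen at each stage. This is harmless because we only need finitely many steps to reach any fixed $q$; we make no claim of $L^\infty$ bounds. One also has to check that $\varphi$ is an admissible test function and that $\nabla v$ is controlled by $\nabla w\cdot\nabla\varphi$ up to $C_\beta$. Both are routine, using the chain rule for Lipschitz truncations and the pointwise identity on the sets $\{|w|<L\}$ and $\{|w|\geq L\}$.
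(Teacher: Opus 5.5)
Your argument is correct and is essentially the Brezis--Kato proof that the paper invokes only through the citation \cite{BK}, without giving details: a truncated power test function, a splitting of $a$ into a bounded part and a part of small $L^{N/2}$ norm that is absorbed via Sobolev, and finitely many Moser steps. One small slip: $t\mapsto t\min\{|t|,L\}^{2(\beta-1)}$ is Lipschitz and vanishes at $t=0$ but is not bounded, and it is these two properties (not boundedness) that give $\varphi\in H^1_0(\Omega)$.
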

\begin{remark}\label{remarkBK}
We apply Lemma \ref{BK} to problem \eqref{pb} in dimension $N=4$ with $a(x):=
|u|^{2}+\e$. Then $a\in L^2(\Omega)$. Then $w\in L^q(\Omega)$ for any $q<\infty$. Hence $f:=u^3+\e|u|\in L^p(\Omega)$ for any $p<\infty$. We use a classical inequalities getting that $$\|u\|_{W^{2, p}(\Omega)}\leq C\| f\|_{L^p(\Omega)}$$ and hence, if we choose $p>\frac N 2$ then $u\in L^{\infty}(\Omega)$.\end{remark}

\section{Matrix and summation identities}\label{app:matrix}
We use an overlooked characterization of interlacing to give a two sentence proof
of Cauchy's interlace theorem. Recall that if polynomials $f(x)$ and $g(x)$ have all
real roots
$$
r_1 \leq r_2 \leq \ldots \leq r_n
\quad\mbox{and}\quad
s_1 \leq s_2 \leq \ldots \leq s_{n-1},
$$
then we say that $f$ and $g$ interlace if and only if
$$
r_1 \leq s_1 \leq r_2 \leq s_2 \leq \cdots \leq s_{n-1} \leq r_n.
$$
The following can be found in \cite{RahmanSchmeisser}, along with a discussion of
its history back to Hermite.

\begin{lemma}\label{positive-2x2}
Let
\[
A=\begin{pmatrix}a&b\\ b&c\end{pmatrix},
\qquad a>0,\quad b>0.
\]
Then its largest eigenvalue
\[
\mu_+(A)=\frac{a+c+\sqrt{(a-c)^2+4b^2}}{2}
\]
is simple and positive. An associated eigenvector is
\[
\begin{pmatrix} b\\ \mu_+(A)-a\end{pmatrix},
\]
and both components are strictly positive.
\end{lemma}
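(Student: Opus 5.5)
The statement to prove is Lemma~\ref{positive-2x2}: for a symmetric $2\times2$ matrix with $a>0$ and $b>0$, the largest eigenvalue is simple and positive, and it has an eigenvector with strictly positive components. The plan is a direct computation with the characteristic polynomial. No earlier result is needed; in particular Cauchy interlacing is not required.

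\emph{Step 1 (formula and simplicity).} The characteristic polynomial of $A$ is $\mu^2-(a+c)\mu+(ac-b^2)$. Its discriminant is
\[
(a+c)^2-4(ac-b^2)=(a-c)^2+4b^2 .
\]
Since $b>0$, this is strictly positive. Hence the two roots are real and distinct, so $\mu_+(A)$ is given by the displayed formula and is simple.

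\emph{Step 2 (positivity and the gap).} We have
\[
\sqrt{(a-c)^2+4b^2}>|a-c|\ge a-c .
\]
Therefore
\[
\mu_+(A)>\frac{a+c+(a-c)}{2}=a>0 .
\]
Note that positivity uses only $a>0$. The sign of $c$ plays no role; this matters for the applications in Section~4, where a diagonal entry may be negative (e.g.\ $a(s)$ near $s=0$).

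\emph{Step 3 (eigenvector).} Put $v=(b,\mu_+-a)^T$. Its first component is $b>0$, and its second is $\mu_+-a>0$ by Step~2. Checking the first row of $(A-\mu_+)v=0$ gives
\[
(a-\mu_+)b+b(\mu_+-a)=0 .
\]
For the second row we need
\[
b^2+(c-\mu_+)(\mu_+-a)=0 ,
\]
i.e.\ $(\mu_+-a)(\mu_+-c)=b^2$. This is exactly the characteristic equation $\det(A-\mu_+)=0$. So $v$ is a nonzero eigenvector with strictly positive components.

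There is no real obstacle here. The only point needing care is to use $b>0$, rather than merely $b\neq0$, which gives the strict inequality $\mu_+>a$. That inequality simultaneously yields simplicity, positivity of $\mu_+$, and positivity of the second component of the eigenvector.
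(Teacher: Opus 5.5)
Your argument is correct and complete. The paper states this lemma without proof, and your characteristic-polynomial computation (discriminant $(a-c)^2+4b^2>0$, the bound $\mu_+(A)>a$, and the check that $(b,\mu_+(A)-a)^T$ satisfies both rows) is exactly the routine verification it takes for granted; the same computation underlies the explicit positive eigenvector in Section~3 and the inequality \eqref{Lambda-1k-positive}.

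One small correction to your closing remark: the strict inequality $\mu_+(A)>a$, and hence simplicity, already follows from $b\neq0$. The hypothesis $b>0$ is needed only to make the first component $b$ of the eigenvector positive.
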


\begin{theorem}\label{cauchy}
The roots of polynomials $f$ and $g$ interlace if and only if the linear
combinations
$
f+\alpha g
$
have all real roots for all $\alpha\in\mathbb{R}$.
\end{theorem}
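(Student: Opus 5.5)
We will take $f$ real-rooted of degree $n$ and $g$ real-rooted of degree $n-1$, as in the definition of interlacing. Common roots are removed first. If $f$ and $g$ share a factor $q$, then $f+\alpha g=q\,(f_0+\alpha g_0)$ with $q$ real-rooted. Both interlacing and the real-rootedness of $f+\alpha g$ for all $\alpha$ pass between $(f,g)$ and $(f_0,g_0)$; for interlacing this is checked by inserting each common root $x$ into both root lists next to each other (it belongs in a gap $s_{j-1}\le x\le r_j$ or $r_j\le x\le s_j$). So we may assume $\gcd(f,g)=1$. The limiting cases of coinciding roots can also be recovered by continuity of roots, since a limit of real-rooted polynomials of fixed degree is real-rooted.

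\emph{($\Rightarrow$).} Assume strict interlacing $r_1<s_1<r_2<\dots<s_{n-1}<r_n$. At each root $s_j$ of $g$ we have $(f+\alpha g)(s_j)=f(s_j)$. Exactly one root of $f$ lies between $s_j$ and $s_{j+1}$, so these values alternate in sign. The intermediate value theorem then gives one root of $f+\alpha g$ in each $(s_j,s_{j+1})$, which is $n-2$ roots. Since $f$ has exactly one root beyond $s_{n-1}$, $f(s_{n-1})$ has the sign opposite to that of $f+\alpha g$ at $+\infty$ (the leading term $x^n$ of $f$ dominates). This produces a root in $(s_{n-1},\infty)$, and the same argument gives one in $(-\infty,s_1)$. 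That makes $n=\deg(f+\alpha g)$ real roots, so all roots are real.

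\emph{($\Leftarrow$).} Consider the rational function $\varphi=-f/g$ on $\mathbb R\setminus\{s_j\}$. A real $x$ is a root of $f+\alpha g$ if and only if $\varphi(x)=\alpha$. Hence the hypothesis says that every value $\alpha$ is attained by $\varphi$ exactly $n$ times, counting multiplicity, since that is the degree. Suppose $\varphi'(x_0)=0$ at some real point, with $x_0$ not a pole. Then $\alpha_0=\varphi(x_0)$ is a root of multiplicity at least $2$ of $f+\alpha_0 g$. Perturbing $\alpha$ slightly to the appropriate side of $\alpha_0$ destroys at least two real roots near $x_0$. Roots far away stay simple and real by continuity, so the total real count drops below $n$, which is a contradiction.

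Consequently $\varphi$ is strictly monotone on each of the $n$ intervals cut out by the $n-1$ poles. Since the poles are simple (coprimality plus real-rootedness of $g$) and the sign convention is uniform, $\varphi$ runs from $\mp\infty$ to $\pm\infty$ on each bounded interval. On each unbounded interval it runs between $\pm\infty$ and the far end, using $\deg f=\deg g+1$. So $\varphi$ vanishes exactly once on each interval, and $f$ has exactly one root between consecutive roots of $g$ and one beyond each extreme root. This is exactly interlacing.

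The main obstacle is the counting step in ($\Leftarrow$): showing rigorously that a critical point of $\varphi$ forces a loss of real roots. Multiplicities must be handled via the local degree of $\varphi$ at $x_0$: for even local degree the count drops on one side, and for odd local degree at least $3$ the nearby roots of $f+\alpha g$ are not all real. We must also check that no roots enter from infinity, which holds because the degree is constant in $\alpha$.
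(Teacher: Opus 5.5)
The paper does not prove this statement. It quotes it (the Hermite--Kakeya--Obreschkoff theorem) from Rahman--Schmeisser and uses it only to deduce the Cauchy interlacing corollary, so your argument is a self-contained replacement for a citation. Your route through the monotonicity of $\varphi=-f/g$ is the standard one, and most of it is sound. The reduction to $\gcd(f,g)=1$, the intermediate-value count for ($\Rightarrow$), and the local-degree analysis at a real critical point of $\varphi$ are all correct; that analysis shows $f+\alpha g$ acquires non-real roots for suitable $\alpha$ near $\varphi(x_0)$. The phrase ``roots far away stay simple and real'' is inaccurate, since they need not be simple, but you do not need it. By continuity of roots (the degree and leading coefficient are fixed), exactly $m$ roots lie near $x_0$, and fewer than $m$ of them are real.

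One step is misjustified, however: ``the poles are simple (coprimality plus real-rootedness of $g$)''. Coprimality only says $f$ does not vanish at the roots of $g$. It does not stop $g$ from having a repeated root: for example, $g=(x-1)^2$ and $f=x(x^2-4)$ are coprime and real-rooted. A double pole is also compatible with $\varphi$ being strictly monotone on both adjacent intervals ($\varphi\to+\infty$ from each side). Yet simplicity is exactly what your last step needs, since $s_j=s_{j+1}$ with $f(s_j)\neq0$ is a failure of interlacing.

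The fix uses only what you already have. Let $g$ have $p$ distinct real roots; they cut $\mathbb R$ into $p+1$ intervals. On each interval $\varphi$ is strictly monotone, and $|\varphi|\to\infty$ at both ends (poles, and $\deg f=\deg g+1$ at $\pm\infty$). A strictly monotone function cannot tend to the same infinity at both ends, so $\varphi$ is a bijection of each interval onto $\mathbb R$; no ``sign convention'' is needed. Hence $\varphi=0$ has exactly $p+1$ real solutions, each simple because $\varphi'\neq0$. On the other hand, $f$ has $n$ real roots counted with multiplicity, none of them poles. So $p+1=n$, i.e.\ $g$ has $n-1=\deg g$ distinct real roots. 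This even makes the real-rootedness of $g$ a consequence rather than an assumption.

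With that inserted your proof is complete. For ($\Rightarrow$) in the non-coprime case you should also justify the removal direction of the reduction, not only insertion: delete an adjacent equal $(r,s)$ pair from the merged root list. Alternatively, rely on your continuity remark.
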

\begin{corollary}
If $A$ is a Hermitian matrix, and $B$ is a principal submatrix of $A$,
then the eigenvalues of $B$ interlace the eigenvalues of $A$.
\end{corollary}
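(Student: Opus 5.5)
The plan is to deduce the corollary directly from Theorem~\ref{cauchy}, applied to the characteristic polynomials of $A$ and $B$. We first treat the case where $B$ is obtained by deleting a single row and the corresponding column, and then iterate. Let $A$ be an $n\times n$ Hermitian matrix. Conjugating by a permutation matrix is a unitary similarity, so it changes neither the spectrum of $A$ nor that of the principal submatrix. Hence we may assume that $B$ is obtained by deleting the last row and the last column of $A$. Set
\[
f(x):=\det(xI_n-A),\qquad g(x):=\det(xI_{n-1}-B).
\]
Both polynomials are monic, of degrees $n$ and $n-1$. Their roots are the eigenvalues of $A$ and of $B$, which are real because both matrices are Hermitian.

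The key step is a determinant identity. Let $E_{nn}$ denote the matrix unit with a $1$ in position $(n,n)$. For every $\alpha\in\mathbb R$ we claim
\[
\det\bigl(xI_n-(A-\alpha E_{nn})\bigr)=f(x)+\alpha g(x).
\]
To prove it, observe that the matrix $xI_n-A+\alpha E_{nn}$ differs from $xI_n-A$ only in the $(n,n)$ entry, which is increased by $\alpha$. The determinant is linear in the last row. Splitting that row as the corresponding row of $xI_n-A$ plus $\alpha e_n^T$ gives $f(x)$ plus $\alpha$ times the $(n,n)$ cofactor. That cofactor is exactly $\det(xI_{n-1}-B)=g(x)$. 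Since $A-\alpha E_{nn}$ is again Hermitian for real $\alpha$, its characteristic polynomial $f+\alpha g$ has only real roots. This holds for every $\alpha\in\mathbb R$, so Theorem~\ref{cauchy} shows that the roots of $f$ and $g$ interlace:
\[
\lambda_1(A)\le\lambda_1(B)\le\lambda_2(A)\le\cdots\le\lambda_{n-1}(B)\le\lambda_n(A).
\]

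For a principal submatrix $B$ of size $m<n-1$, we delete the rows and columns one at a time. This produces a chain of Hermitian matrices $A=A_n\supset A_{n-1}\supset\cdots\supset A_m=B$, each a principal submatrix of the previous one with size one less. Applying the one-step result along the chain and inducting, we obtain the general interlacing inequalities
\[
\lambda_i(A)\le\lambda_i(B)\le\lambda_{i+n-m}(A),\qquad i=1,\ldots,m.
\]

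We expect the only delicate points to be the cofactor identity, namely checking that the perturbation $\alpha E_{nn}$ enters linearly and with the correct sign, and making sure that Theorem~\ref{cauchy} is applied to a pair of polynomials of degrees $n$ and $n-1$ with real roots, which is exactly its setting. For the application in Step~2 of the five-peak analysis, only the one-step case $n=3$, $m=2$ is needed.
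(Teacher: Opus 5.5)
Your proposal is correct and is essentially the argument the paper intends (Fisk's two-sentence proof of Cauchy interlacing via Theorem~\ref{cauchy}): shifting the $(n,n)$ entry of $A$ by a real $\alpha$ keeps the matrix Hermitian and turns its characteristic polynomial into $f+\alpha g$, which is therefore real-rooted for every $\alpha$, and Theorem~\ref{cauchy} then gives interlacing. The paper leaves the details to the cited reference; your cofactor computation, the reduction by permutation, and the iteration to smaller principal submatrices supply them.
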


See \cite{F}.
\begin{theorem}[Decomposition of Weyl]\label{Weyl}
Let $A$ and $B$ two Hermitian matrices $n\times n$ with eigenvalues $\lambda_1\leq \ldots\leq \lambda_n$ and $\mu_1\leq \ldots\leq \mu_n$ respectively.
Let $\gamma_1\leq\ldots\leq\gamma_n$ the eigenvalues of $A+B$. Then 
$$\lambda_j+\mu_{k-j+1}\leq \gamma_k\leq \lambda_i+\mu_{n-i+k},\quad\forall\,\, k\leq n$$
for $1\leq j\leq k\leq i\leq n.$
\\
In particular, it follows that $$
\lambda_n+\mu_1\leq \gamma_n\leq \lambda_n+\mu_n.$$
\end{theorem}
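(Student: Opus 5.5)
We argue with Rayleigh quotients and a dimension count, in the spirit of the Courant--Fischer min--max principle; this keeps the proof at the level of elementary linear algebra. Let $a_1,\ldots,a_n$, $b_1,\ldots,b_n$, $c_1,\ldots,c_n$ be orthonormal eigenbases of $A$, $B$, $A+B$ for $\lambda_\ell$, $\mu_\ell$, $\gamma_\ell$ respectively. For Hermitian $X$ write $R_X(x)=\langle Xx,x\rangle/|x|^2$ for $x\neq0$. Two facts will be used throughout.

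\begin{itemize}
\item[(F1)] On $\mathrm{span}\{a_p,\ldots,a_q\}$ one has $\lambda_p\le R_A\le\lambda_q$, with analogous statements for $B$ and $A+B$.
\item[(F2)] $R_{A+B}=R_A+R_B$.
\end{itemize}

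\emph{Upper bound} $\gamma_k\le\lambda_i+\mu_{n-i+k}$ for $k\le i\le n$. Set $V=\mathrm{span}\{a_1,\ldots,a_i\}$, $W=\mathrm{span}\{b_1,\ldots,b_{n-i+k}\}$ and $Z=\mathrm{span}\{c_k,\ldots,c_n\}$. These have dimensions $i$, $n-i+k$ and $n-k+1$, whose sum is $2n+1$. Hence $V\cap W\cap Z\neq\{0\}$, by applying $\dim(P\cap Q)\ge\dim P+\dim Q-n$ twice. For a nonzero $x$ in this intersection, (F1) and (F2) give
\[
\gamma_k\le R_{A+B}(x)=R_A(x)+R_B(x)\le\lambda_i+\mu_{n-i+k}.
\]

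\emph{Lower bound} $\lambda_j+\mu_{k-j+1}\le\gamma_k$ for $1\le j\le k$. Take $V'=\mathrm{span}\{a_j,\ldots,a_n\}$, $W'=\mathrm{span}\{b_{k-j+1},\ldots,b_n\}$ and $Z'=\mathrm{span}\{c_1,\ldots,c_k\}$. Their dimensions are $n-j+1$, $n-k+j$ and $k$, again summing to $2n+1$, so there is a nonzero common vector $x$. On it,
\[
\lambda_j+\mu_{k-j+1}\le R_A(x)+R_B(x)=R_{A+B}(x)\le\gamma_k.
\]
Alternatively, this bound follows from the upper bound applied to $-A$, $-B$ after reindexing the eigenvalues in reverse order.

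\emph{Particular case.} Take $k=n$. Choosing $j=n$ in the lower bound gives $\lambda_n+\mu_1\le\gamma_n$. Choosing $i=n$ in the upper bound gives $\gamma_n\le\lambda_n+\mu_n$.

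The only point needing care is the index bookkeeping: the dimensions must sum to exactly $2n+1$ for the triple intersection to be nontrivial, and that sum is what dictates the admissible ranges $1\le j\le k\le i\le n$. Beyond this, the argument is routine.
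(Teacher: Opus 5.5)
Your proof is correct. Both dimension counts check out: $i+(n-i+k)+(n-k+1)=2n+1$ for the upper bound and $(n-j+1)+(n-k+j)+k=2n+1$ for the lower bound, so each triple intersection is nontrivial. The constraints $k\le i$ and $j\le k$ are exactly what make $W$ and $W'$ well defined. Your alternative derivation of the lower bound from the upper bound applied to $-A,-B$ with reversed indexing is also valid, and the choices $k=j=i=n$ give the particular case.

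The paper gives no proof of this statement; it only refers to \cite{F}. So your write-up is a genuinely different route: it replaces a citation with the standard Courant--Fischer-type argument (triple intersection of spectral subspaces plus additivity $R_{A+B}=R_A+R_B$). The citation is shorter. Your version makes the appendix self-contained and shows directly why the admissible index range is $1\le j\le k\le i\le n$ in the paper's increasing-order convention.

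For the particular case alone, your argument collapses to two lines: $\gamma_n=\max R_{A+B}\le\max R_A+\max R_B=\lambda_n+\mu_n$, and $\gamma_n\ge R_{A+B}(a_n)=\lambda_n+R_B(a_n)\ge\lambda_n+\mu_1$.
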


\begin{lemma}\label{sommasin2}
It holds $$ \mathtt S_k:=\sum_{j=1}^{k-1}(-1)^j\frac{1}{\sin^2\frac{\pi j}{k}}=\left\{\begin{aligned} &0 \quad&\hbox{if}\,\, k\,\, \hbox{is odd}\\
&-\frac{k^2+2}{6}\quad&\hbox{if}\,\, k\,\, \hbox{is even}\end{aligned}\right.$$
\end{lemma}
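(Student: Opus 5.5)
The plan is to reduce $\mathtt S_k$ to the classical unsigned sum
\[
T_k:=\sum_{j=1}^{k-1}\frac{1}{\sin^2\frac{\pi j}{k}}=\frac{k^2-1}{3},
\]
and then to use a parity split.

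\emph{Step 1: proof of the classical identity.} I start from the product formula
\[
\sin(kx)=2^{k-1}\prod_{j=0}^{k-1}\sin\Big(x+\frac{\pi j}{k}\Big).
\]
Taking logarithms and differentiating twice (using $(\log\sin y)''=-1/\sin^2 y$) gives
\[
\sum_{j=0}^{k-1}\frac{1}{\sin^2(x+\pi j/k)}=\frac{k^2}{\sin^2(kx)}.
\]
I move the $j=0$ term to the right-hand side and let $x\to0$. The Laurent expansion
\[
\frac1{\sin^2 y}=\frac1{y^2}+\frac13+O(y^2)
\]
gives
\[
\frac{k^2}{\sin^2 (kx)}-\frac1{\sin^2 x}=\frac{k^2}{k^2x^2}+\frac{k^2}{3}-\frac1{x^2}-\frac13+O(x^2)\longrightarrow\frac{k^2-1}{3}.
\]
Hence $T_k=(k^2-1)/3$. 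This is the only step with real content, and it is standard.

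\emph{Step 2: $k$ odd.} I use the reflection $j\mapsto k-j$, which permutes $\{1,\dots,k-1\}$. Since $\sin(\pi(k-j)/k)=\sin(\pi j/k)$ and $(-1)^{k-j}=-(-1)^j$ when $k$ is odd, we get $\mathtt S_k=-\mathtt S_k$. Therefore $\mathtt S_k=0$.

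\emph{Step 3: $k=2n$ even.} I split the sum according to the parity of $j$.

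The even indices $j=2i$, $1\le i\le n-1$, have sign $+1$. They contribute
\[
\sum_{i=1}^{n-1}\frac{1}{\sin^2(\pi i/n)}=T_n=\frac{n^2-1}{3}.
\]
For $n=1$ this sum is empty, and the formula still holds since both sides are $0$.

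The odd indices have sign $-1$. Their unsigned sum is
\[
T_k-T_n=\frac{k^2-1}{3}-\frac{n^2-1}{3}.
\]

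Combining the two parts gives
\[
\mathtt S_k=2\,\frac{n^2-1}{3}-\frac{k^2-1}{3}=\frac{k^2/2-2-k^2+1}{3}=-\frac{k^2+2}{6},
\]
which is the claimed value. As a check, $k=2$ gives $\mathtt S_2=-1=-\tfrac{6}{6}$.
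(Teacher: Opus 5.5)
Your proposal is correct and follows the paper's proof: the reflection $j\mapsto k-j$ for odd $k$, and for $k=2n$ the split by parity of $j$, where the even indices give $\frac{n^2-1}{3}$ and the odd indices give the remainder of $\frac{k^2-1}{3}$. The only difference is that you derive the classical identity $\sum_{j=1}^{k-1}\sin^{-2}(\pi j/k)=\frac{k^2-1}{3}$ from the product formula for $\sin(kx)$, which checks out, whereas the paper simply cites it.
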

\begin{proof}
\textbf{Case 1: $k$ odd.}\\
We evaluate
$$\begin{aligned} \mathtt S_k&=\sum_{j=1}^{\frac{k-1}{2}}(-1)^j\frac{1}{\sin^2\frac{\pi j}{k}}+\sum_{j=\frac{k-1}{2}+1}^{k-1}(-1)^j\frac{1}{\sin^2\frac{\pi j}{k}}\\
&=\sum_{j=1}^{\frac{k-1}{2}}(-1)^j\frac{1}{\sin^2\frac{\pi j}{k}}+(-1)^k\sum_{j=1}^{\frac{k-1}{2}}
(-1)^j\frac{1}{\sin^2\frac{\pi (k-j)}{k}}=0\end{aligned}$$ since $k$ is odd and $\sin\left(\frac{\pi j}{k}\right)=\sin\left(\frac{\pi(k-j)}{k}\right).$\\
\medskip
\textbf{Case 2: $k$ even.}\\
We let $k=2m$. We have that by \cite{AZ} and \cite{Fi} (Identity (23)) (see also \cite{MRV}) we have that
\beq\label{stimapos}
\sum_{j=1}^{k-1}\frac{1}{\sin^2\!\left(\frac{\pi j}{k}\right)}
=
\frac{k^2-1}{3}.
\eeq
Then

\[
\mathtt E=
\sum_{j=1\atop\\ j\text{ even}}^{k-1}
\frac{1}{\sin^2\!\left(\frac{\pi j}{k}\right)},
\qquad
\mathtt O=
\sum_{j=1\atop \\ j \text{odd}}^{k-1}
\frac{1}{\sin^2\!\left(\frac{\pi j}{k}\right)},\qquad \mathtt S_k=\mathtt E-\mathtt O.
\]

By using \eqref{stimapos}
\[
\mathtt E
=
\sum_{r=1}^{m-1}
\frac{1}{\sin^2\!\left(\frac{\pi r}{m}\right)}
=
\frac{m^2-1}{3}.
\]

Since

\[
\mathtt E+\mathtt O=\sum_{j=1}^{k-1}\frac{1}{\sin^2\!\left(\frac{\pi j}{k}\right)}=\frac{k^2-1}{3}
=
\frac{4m^2-1}{3},
\]

we get

\[
\mathtt O
=
\frac{4m^2-1}{3}
-
\frac{m^2-1}{3}
=
m^2.
\]

Hence

\[
\mathtt S_k
=
\mathtt E-\mathtt O
=
\frac{m^2-1}{3}-m^2
=
-\frac{2m^2+1}{3}.
\]

Since $k=2m$ we get that

\[
\mathtt S_k
=
-\frac{k^2+2}{6}.
\]
\end{proof}

\begin{lemma}\label{sommaab}
It holds, for $a>|b|$ and for $k\geq 2$
\beq\label{sommasenzameno1}\sum_{j=1}^{k-1}\frac{1}{a-b\cos\frac{2\pi j}{k}}=\frac{k}{\sqrt{a^2-b^2}}
\frac{1+r^k}{1-r^k}-\frac{1}{a-b}\eeq and

\beq\label{sommaconmeno1}\sum_{j=1}^{k-1}\frac{(-1)^j}{a-b\cos\frac{2\pi j}{k}}=\left\{\begin{aligned}&\frac{2k}{\sqrt{a^2-b^2}}
\frac{r^{\frac k 2}}{1-r^k}-\frac{1}{a-b}\quad &\hbox{if}\,\, k\,\, \hbox{is even}\\
& 0\quad &\hbox{if}\,\, k\,\, \hbox{is odd}\end{aligned}\right.
\eeq

where
\[
r=\frac{a-\sqrt{a^2-b^2}}{b},
\qquad |r|<1.
\]
\end{lemma}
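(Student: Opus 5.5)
The plan is to prove both identities of Lemma~\ref{sommaab} from the Poisson-kernel (geometric series) expansion. Since $a>|b|$, set $r=(a-\sqrt{a^2-b^2})/b$ (for $b\neq0$; the case $b=0$ is trivial by continuity). One checks that $r$ is the root of $br^2-2ar+b=0$ with $|r|<1$, so that $a=b(1+r^2)/(2r)$ and $\sqrt{a^2-b^2}=b(1-r^2)/(2r)$. This gives the classical expansion
\[
\frac{1}{a-b\cos\theta}=\frac{1}{\sqrt{a^2-b^2}}\,\frac{1-r^2}{1-2r\cos\theta+r^2}
=\frac{1}{\sqrt{a^2-b^2}}\Bigl(1+2\sum_{n\ge1}r^n\cos(n\theta)\Bigr),
\]
which converges absolutely and uniformly in $\theta$ because $|r|<1$.

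Next I would sum over $\theta_j=2\pi j/k$, first for $j=0,\ldots,k-1$, and then subtract the $j=0$ term $1/(a-b)$. For the first identity, use $\sum_{j=0}^{k-1}\cos(2\pi nj/k)=k$ if $k\mid n$ and $0$ otherwise. This yields
\[
\frac{k}{\sqrt{a^2-b^2}}\Bigl(1+2\sum_{m\ge1}r^{mk}\Bigr)=\frac{k}{\sqrt{a^2-b^2}}\,\frac{1+r^k}{1-r^k},
\]
and removing $j=0$ gives \eqref{sommasenzameno1}.

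For the alternating sum with $k$ even, write $(-1)^j=\cos(\pi j)=\cos\bigl(\tfrac k2\cdot\tfrac{2\pi j}{k}\bigr)$. Then
\[
\sum_{j=0}^{k-1}(-1)^j\cos\Bigl(\frac{2\pi nj}{k}\Bigr)=\frac12\sum_{j=0}^{k-1}\Bigl[\cos\Bigl(\frac{2\pi(n+k/2)j}{k}\Bigr)+\cos\Bigl(\frac{2\pi(n-k/2)j}{k}\Bigr)\Bigr],
\]
which equals $k$ when $n\equiv k/2 \pmod k$ and $0$ otherwise. The constant term vanishes because $\sum_{j=0}^{k-1}(-1)^j=0$. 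So the full sum is
\[
\frac{2k}{\sqrt{a^2-b^2}}\sum_{m\ge0}r^{k/2+mk}=\frac{2k}{\sqrt{a^2-b^2}}\,\frac{r^{k/2}}{1-r^k},
\]
and subtracting the $j=0$ term gives the even case of \eqref{sommaconmeno1}.

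For $k$ odd, I would use the pairing $j\leftrightarrow k-j$, exactly as in Case~1 of Lemma~\ref{sommasin2}. We have $\cos(2\pi(k-j)/k)=\cos(2\pi j/k)$ while $(-1)^{k-j}=-(-1)^j$, so the terms cancel pairwise; since $k$ is odd, no index is its own partner. Hence the sum over $j=1,\ldots,k-1$ vanishes.

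The main obstacle is only bookkeeping: getting the sign and branch of $r$ right when $b<0$ (then $r<0$, but the Poisson identity still holds), and interchanging the sums, which is justified by absolute convergence. Everything else is routine.
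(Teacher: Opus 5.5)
Your proof is correct, and it takes a different route from the paper's for the first identity and for the even case of the second.

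\textbf{The paper's route.} It works algebraically on the finite sum. With $z_j=e^{2\pi\mathtt i j/k}$ it writes
\[
a-b\cos\alpha_j=-\frac{b}{2z_j}(z_j-r_+)(z_j-r_-),
\qquad r_\pm=\frac{a\pm\sqrt{a^2-b^2}}{b},\qquad r_+r_-=1 .
\]
It then decomposes into partial fractions and evaluates $\sum_{j=0}^{k-1}(z_j-r)^{-1}=kr^{k-1}/(1-r^k)$, which is the logarithmic derivative of $z^k-1$. The alternating even case needs no new computation. The paper splits $j$ into even and odd indices. The even-index part is \eqref{sommasenzameno1} with $k$ replaced by $n=k/2$ (the $j=0$ term included). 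The odd-index part is the full sum for $k$ minus the even-index part.

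\textbf{Your route.} You expand $1/(a-b\cos\theta)$ in its Poisson-kernel Fourier series and use orthogonality of the characters of $\mathbb Z/k\mathbb Z$. Only the frequencies $n\equiv 0$ modulo $k$ survive in the first sum, and only $n\equiv k/2$ modulo $k$ in the second. This gives both formulas by the same mechanism and makes visible why they have the shape they do. It would also handle any twisted sum $\sum_j \cos(2\pi pj/k)/(a-b\cos\frac{2\pi j}{k})$ immediately.

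\textbf{Trade-offs.} Your argument uses an infinite series, which is harmless since $|r|<1$ and the $j$-sum is finite. The paper's argument is purely finite and algebraic, but treats the alternating case indirectly by reducing it to the non-alternating identity for $k/2$. Both versions handle $b<0$ (so $r<0$) without change, as you noted.

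For odd $k$ you use the same pairing $j\leftrightarrow k-j$ as the paper. Some such argument is needed in either approach, because $(-1)^j$ is then not $k$-periodic and is not a character of $\mathbb Z/k\mathbb Z$.
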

\begin{proof}
We show first \eqref{sommasenzameno1}.\\
Let $z_j:=e^{\frac{2\pi \mathtt i j}{k}}$ and $\alpha_j:=\frac{2j \pi }{k} $. Then, $z_j^k=1$.\\ We get
$$\frac{1}{a-b\cos\alpha_j}=\frac{1}{a-\frac b 2(z_j+z_j^{-1})}=-\frac{2z_j}{bz_j^2-2az_j+b}=-\frac{2z_j}{b(z_j-r_+)(z_j-r_-)}$$ where
$$r_\pm :=\frac{a\pm\sqrt{a^2-b^2}}{b}.$$
Then, we can decompose, obtaining 
$$\frac{1}{a-b\cos\alpha_j}=-\frac{2}{b(r_+-r_-)}\left(\frac{r_+}{z_j-r_+}-\frac{r_-}{z_j-r_-}\right)$$ 
In \cite{SteinShakarchi} it is shown that
$$\sum_{j=0}^{k-1}\frac{1}{z_j-r}=\frac{k r^{k-1}}{1-r^k}.$$ Then
$$\begin{aligned} \sum_{j=0}^{k-1}\frac{1}{a-b\cos\frac{2\pi j}{k}}&=-\frac{2}{b(r_+-r_-)}\sum_{j=0}^{k-1}\left(\frac{r_+}{z_j-r_+}-\frac{r_-}{z_j-r_-}\right)=\frac{2k}{b(r_+-r_-)}\left(-\frac{r_+^k}{1-r_+^k}+\frac{r_-^k}{1-r_-^k}\right)\\
&=\frac{2k}{b(r_+-r_-)}\frac{1+r_-^k}{1-r_-^{k}}\end{aligned}$$
where we have used the fact that $r_-r_+=1$. Since $r_+-r_-=\frac{2}{b}\sqrt{a^2-b^2}$ the thesis follows.\\\\
Now we prove \eqref{sommaconmeno1}.\\
{\bf Case $k$ odd.}\\  Then, if we denote by $\mathcal S_k:=\sum_{j=1}^{k-1}\frac{(-1)^j}{a-b\cos\frac{2\pi j}{k}}$, we get
$$\begin{aligned}\mathcal S_k&=\sum_{j=1}^{\frac{k-1}{2}}\frac{(-1)^j}{a-b\cos\frac{2\pi j}{k}}+\sum_{j=\frac{k-1}{2}+1}^{k-1}\frac{(-1)^j}{a-b\cos\frac{2\pi j}{k}}= \sum_{j=1}^{\frac{k-1}{2}}\frac{(-1)^j}{a-b\cos\frac{2\pi j}{k}}+\sum_{\ell=1}^{\frac{k-1}{2}}\frac{(-1)^{k-\ell}}{a-b\cos\frac{2\pi (\ell-k)}{k}}\\
&=\sum_{j=1}^{\frac{k-1}{2}}\frac{(-1)^j}{a-b\cos\frac{2\pi j}{k}}-\sum_{j=1}^{\frac{k-1}{2}}\frac{(-1)^j}{a-b\cos\frac{2\pi j}{k}}=0.\end{aligned}$$
{\bf Case $k$ even.}\\ Let now $k$ even (i.e. $k=2n$). Then
$$\begin{aligned}S_{2n}&=\sum_{j=0}^{2n-1}\frac{(-1)^j}{a-b\cos\frac{\pi j}{n}}=\sum_{j\, \small{even}}\frac{(-1)^j}{a-b\cos\frac{\pi j}{n}}+\sum_{j\, \small{odd}}\frac{(-1)^j}{a-b\cos\frac{\pi j}{n}}\\
&=\underbrace{\sum_{m=0}^{n-1}\frac{1}{a-b\cos\frac{2m\pi}{n}}}_{:=\mathcal E}-\underbrace{\sum_{m=0}^{n-1}\frac{1}{a-b\cos\frac{\pi (2m+1)}{n}}}_{:=\mathcal O}.\end{aligned}$$ 
By using \eqref{sommasenzameno1} we get that $$\mathcal E:=\frac{n}{\sqrt{a^2-b^2}}\frac{1+r^n}{1-r^n}.$$ Moreover $$\mathcal E+\mathcal O=\frac{2n}{\sqrt{a^2-b^2}}\frac{1+r^{2n}}{1-r^{2n}}.$$ Hence $$\mathcal O=\frac{n}{\sqrt{a^2-b^2}}\frac{1-r^n}{1+r^n}.$$ Finally $$S_{2n}:=\frac{n}{\sqrt{a^2-b^2}}\frac{4r^n}{1-r^{2n}}$$ and the thesis follows since $n=\frac k 2$.
\end{proof}

\section*{Data Availability Statements}
All data generated or analysed during this study are included in this article.
\section*{Declarations}
{\bf Conflicts of Interest:} The authors declare they have no financial interests.

\end{document}